\newtheorem{thm}{Theorem}[section]
\newtheorem{cor}[thm]{Corollary}
\newtheorem{lemma}[thm]{Lemma}
\newtheorem{prop}[thm]{Proposition}
\renewcommand{\proofname}{Proof}
\newtheorem{proposition}[thm]{Proposition}
\newtheorem{question}[thm]{Question}
\theoremstyle{definition}
\newtheorem{remark}[thm]{Remark}
\newtheorem{definition}[thm]{Definition}
\def\min{\operatorname{min}}
\def\max{\operatorname{max}}
\def\c1{\operatorname{c_1}}
\def\c2{\operatorname{c_2}}
\def\Hilb{\operatorname{Hilb}}
\def\Sym{\operatorname{Sym}}
\def\CC{{\mathbb C}}
\def\ZZ{{\mathbb Z}}
\def\PP{{\mathbb P}}
\def\A{{\mathcal A}}
\def\SS{{\mathcal S}}
\def\G{{\mathcal G}}
\def\L{{\mathcal L}}
\def\M{{\mathcal M}}
\def\O{{\mathcal O}}
\def\E{{\mathcal E}}
\def\H{{\mathcal H}}
\def\F{{\mathcal F}}
\def\K{{\mathcal K}}
\def\V{{\mathcal V}}
\def\C{{\mathcal C}} 
\def\K{{\mathcal K}}
\def\X{{\mathcal X}}
\def\FF{{\mathbb F}}
\def\x{\times}                   
\def\cong{\simeq}
\def\sub{\subseteq}
\def\+{\oplus}                   
\def\*{\otimes}                  
\def\khpil{\rightarrow}
\def\Pic{\operatorname{Pic}}
\def\Supp{\operatorname{Supp}}
\def\Supp{\operatorname{Supp}}
\begin{document}

\title[ $k$-gonal loci in Severi varieties on $K3$ surfaces]{On $k$-gonal loci in Severi varieties on  general $K3$ surfaces and rational curves on hyperk\"ahler manifolds} 

\author{Ciro Ciliberto}
\address{Ciro Ciliberto, Dipartimento di Matematica, Universit\`a di Roma Tor Vergata, Via della Ricerca Scientifica, 00173 Roma, Italy}
\email{cilibert@axp.mat.uniroma2.it}

\author{Andreas Leopold Knutsen}
\address{Andreas Leopold Knutsen, Department of Mathematics
University of Bergen,
Postboks 7800, N-5020 Bergen, Norway}
\email{andreas.knutsen@math.uib.no}

\begin{abstract} In this paper we study the gonality of the normalizations of curves in the linear system $\vert H\vert$ of a general primitively  polarized complex $K3$ surface $(S,H)$ of genus $p$.  We prove two main results. First we give a necessary  condition on  $p, g, r, d$ for the existence of a curve in $ \vert H\vert$ with geometric genus $g$ whose normalization has a $g^ r_d$. Secondly we prove that for all numerical cases compatible with the above necessary condition, there is a family of \emph{nodal} curves in $\vert H\vert$ of genus $g$ carrying a $g^1_k$ and of dimension equal to the \emph{expected dimension} $\min\{2(k-1),g\}$. Relations with the Mori cone of the hyperk\"ahler manifold $\Hilb^ k(S)$  are discussed.
\end{abstract}

\maketitle

\tableofcontents

\section*{Introduction}
Let  $(S,H)$  be a primitively  polarized complex $K3$ surface of genus $p\ge 2$, i.e. $\Omega^ 2_S\cong \O_S$, $h^ 1(\O_S)=0$ and  $H$ is a globally generated, indivisible, divisor (or line bundle) with $H^ 2=2p-2$.  The main objective of this paper is to study the gonality of the normalization of curves, specifically of \emph{nodal} curves,  in the linear system $\vert H\vert$, when $(S,H)$ is general 
in its moduli space, that is, $(S,H)$ belongs to a Zariski open dense subset.


Let $V_{|H|,\delta}(S)\subseteq \vert H\vert$ be the \emph{Severi variety} of curves with $\delta\le p$ nodes. It is a classical result that $V_{|H|,\delta}(S)$ is a nonempty, locally closed, smooth variety of dimension $g=p-\delta$, which is the geometric genus of the curves in  $V_{|H|,\delta}(S)$.  The moduli morphism $V_{|H|,\delta}(S)\to \mathcal M_g$ is finite to its image (see Proposition \ref {prop:fff} below). 

We consider $V^k_{|H|,\delta}(S)\subseteq V_{|H|,\delta}(S)$ the subvariety of curves  whose normalizations carry a $g^1_k$. By Brill-Noether theory, if $g \leq 2(k-1)$ then  $V^k_{|H|,\delta}(S)= V_{|H|,\delta}(S)$  so the interesting range is $g >2(k-1)$. A count of parameters, carried out in \S \ref {ssec:loci}, suggests that the \emph{expected dimension} of  $V^k_{|H|,\delta}(S)$ is $2(k-1)$.  In any event, if nonempty, $V^k_{|H|,\delta}(S)$ has dimension at least $2(k-1)$ 
(see Proposition \ref {degarg}). 

Our first  main result is Theorem \ref{thm:noexist}, which yields a necessary condition for the normalization of a curve $C\in \vert H\vert$ of geometric genus $g$ (with any type of singularities) on a primitively polarized $K3$ surface $(S,H)$ of genus $p$ with no reducible curves in $|C|$ (e.g., with
$\Pic (S) \cong \ZZ[H]$) to possess a $g^r_d$, namely that 
\[ \rho(p,\alpha r,\alpha d+\delta)  \geq 0, \; \; \mbox{where} \; \; \alpha:=\Big\lfloor \frac{gr+(d-r)(r-1)}{2r(d-r)}\Big\rfloor, \]
and $\rho$ is the usual {\it Brill-Noether number}. In the case of $g^1_k$'s, setting $r=1$, $d=k$ and $\delta:=p-g$,
the above necessary condition reads
  \begin{equation}
    \label{eq:boundintro}
    \delta \geq \alpha \Big(p-\delta -(k-1)(\alpha+1)\Big)\;\; \text{where}\; \; \alpha:=\Big\lfloor \frac{p-\delta}{2(k-1)}\Big\rfloor.
  \end{equation}
Theorem \ref{thm:noexist} is a strong improvement of \cite[Thm.~1.4]{fkp} and its proof is based on the vector bundle  approach \`a la Lazarsfeld \cite{L}.

Our second main result deals with nonemptiness and dimension  of 
$V^k_{|H|,\delta}(S)$, and with properties of its general member, and proves that the bound \eqref{eq:boundintro} is optimal:

\begin{thm} \label{thm:main}
  Let $(S,H)$ be a general primitively polarized $K3$ surface of genus $p \geq 3$
  and let $\delta$ and $k$ be integers satisfying $0 \leq \delta \leq p$ and $k \geq 2$.  Set $g=p-\delta$. Then:
  \begin{itemize}
\item [(i)]    $V^k_{|H|,\delta}(S) \neq \emptyset$ if and only if \eqref{eq:boundintro} holds;
\item [(ii)]  when nonempty, $V^k_{|H|,\delta}(S)$ has an irreducible component of  the \emph{expected dimension} $\min\{2(k-1),g\}$ whose general element  is an irreducible curve $C$ with normalization $\widetilde{C}$ of genus $g$ such that $\dim (W^1_k(\widetilde{C}))=\max\{0,\rho(g,1,k)=2(k-1)-g\}$;
\item [(iii)]  
in addition, when $g \geq 2(k-1)$ (resp. $g < 2(k-1)$), any (resp. the general) $g^1_k$ on $\widetilde{C}$ has simple ramification and all nodes of $C$ are non-neutral with respect to it. 
\end{itemize}
\end{thm}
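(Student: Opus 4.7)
The plan is to split the theorem into its three parts and handle them essentially in order, with (i) and (ii) intertwined.

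The ``only if'' direction of (i) is immediate: for a general $(S,H)$ we have $\Pic(S)\cong\ZZ[H]$, so $|H|$ contains no reducible curves and we may apply Theorem \ref{thm:noexist} with $r=1$ and $d=k$ to any curve in $V^k_{|H|,\delta}(S)$, producing precisely the inequality \eqref{eq:boundintro}.

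The ``if'' direction and the existence assertion in (ii) require the explicit construction of a nodal curve $C_0\in|H|$ whose normalization carries a $g^1_k$, with the correct node count. I would work on a specialization of $(S,H)$ to a $K3$ surface $S_0$ of higher Picard rank where the gonality structure is visible: for instance, $S_0$ carrying an elliptic pencil $|F|$ with $F\cdot H=k$, or a smooth low-gonality curve $D$ with $D\cdot H=k$, together with a compatible residual class. Writing $g=p-\delta$ and $\alpha=\lfloor g/(2(k-1))\rfloor$, the inequality \eqref{eq:boundintro} decomposes $g$ essentially as $g=2(k-1)\alpha+\beta$ with $0\le\beta<2(k-1)$, and this decomposition dictates a curve $C_0$ built out of $\alpha$ ``pencil-bearing'' pieces of genus $2(k-1)$ plus a residual Brill-Noether-general component of genus $\beta$, glued at the correct number of nodes so that the total node count is $\delta$ and the normalization acquires a $g^1_k$ by restriction of the ambient pencil on the pencil-bearing components and a Brill-Noether-theoretic $g^1_k$ on the residual piece. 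The main obstacle is exactly here: one must engineer the Picard lattice of $S_0$ so that the numerical constraints ($C_0\in|H|$, correct arithmetic and geometric genera, $\delta$ nodes, no unwanted base points of the pencil) all match, and then verify smoothability of $C_0$ inside $V_{|H|,\delta}(S_0)$ via the standard $h^1$ vanishing of the normal sheaf on the normalization, together with deformation of the pair $(S_0,C_0)$ to the pair $(S,C)$ for general $(S,H)$ using openness of the relevant loci in moduli.

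For the dimension statement in (ii), the lower bound $\dim V^k_{|H|,\delta}(S)\ge 2(k-1)$ is given by Proposition \ref{degarg}. The upper bound follows by combining finiteness of the moduli map $V_{|H|,\delta}(S)\to\mathcal M_g$ (Proposition \ref{prop:fff}) with the codimension $\max\{0,g-2(k-1)\}$ of the $k$-gonal locus inside $\mathcal M_g$; hence $\dim V^k_{|H|,\delta}(S)\le\min\{g,2(k-1)\}$, matching the lower bound on the constructed component. The statement $\dim W^1_k(\widetilde C)=\max\{0,\rho(g,1,k)\}$ for the general $\widetilde C$ then follows because the image in $\mathcal M_g$ of our component is not contained in the stratum where $W^1_k$ jumps: otherwise a strictly larger codimension would contradict the dimension just computed.

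Finally, for (iii), simple ramification of a general (respectively, every) $g^1_k$ on $\widetilde C$ is the standard Eisenbud-Harris-Gieseker-Petri property, applicable once we know $\widetilde C$ is Brill-Noether general, which is exactly the content of the previous paragraph. Non-neutrality of the nodes is then a dimension-count argument by contradiction: if some node $x$ of the general $C$ in our component were neutral with respect to the chosen $g^1_k$ coming from $L\in W^1_k(\widetilde C)$, then $L$ would descend to the partial normalization of $C$ at the other $\delta-1$ nodes, producing a family of curves of geometric genus $g+1$ carrying a $g^1_k$ inside $|H|$; tracking the expected dimension of this family against $2(k-1)$, and comparing with the bound \eqref{eq:boundintro} for the pair $(g+1,\delta-1)$, yields the required contradiction.
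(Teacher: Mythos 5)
Your ``only if'' direction is exactly the paper's (Theorem \ref{thm:noexist} with $r=1$, $d=k$), and the lower bound $\dim V \geq 2(k-1)$ via Proposition \ref{degarg} is also as in the paper. But the core of the theorem --- existence, the dimension upper bound, and part (iii) --- is where your proposal has genuine gaps. First, the specialization you suggest (a $K3$ surface $S_0$ with an elliptic pencil $|F|$, $F\cdot H=k$) cannot feed into the deformation argument: on such an $S_0$ \emph{every} curve in $|H|$ has a normalization carrying a $g^1_k$ cut out by the pencil, so $V^k_{|H_0|,\delta}(S_0)=V_{|H_0|,\delta}(S_0)$ has dimension $g$, which exceeds the expected dimension $2(k-1)$ precisely in the interesting range $g>2(k-1)$. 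Proposition \ref{degargrel} requires the limit component to have dimension \emph{exactly} $2(k-1)$ in order to conclude that it deforms sideways to the general $(S,H)$; a too-large limit locus can be (and here is) entirely stuck in the special fibre. This is exactly the obstruction the paper's two-step degeneration is built to avoid: one first specializes to $\K'_p$ and then to a union $R=R_1\cup R_2$ of two scrolls, where the $k$-gonal condition becomes a concrete incidence condition in $\Sym^2(\PP^1)\cong\PP^2$ between the conics $\mathfrak{c}_j$ and the degree-$(k-1)$ curves $C_{\mathfrak g}$ parametrizing $g^1_k$'s on $\PP^1$, yielding limit loci $V^k(\alpha_1,\ldots,\alpha_p)$ of dimension exactly $\min\{2(k-1),g\}$ whenever all $\alpha_j\le 2(k-1)$, and the arithmetic of Proposition \ref{lemma:trovainteri} shows such $(\alpha_j)$ exist exactly when \eqref{eq:boundintro} holds. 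Your construction of $C_0$ out of ``pencil-bearing pieces'' is only a plan; you acknowledge yourself that the lattice engineering, the genus/node bookkeeping, and the smoothing are not done, and these are the entire difficulty.

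Two further steps are incorrect as argued. Your dimension upper bound ``finiteness of $\psi$ plus codimension of $\M^1_{g,k}$'' does not exist: intersecting the $g$-dimensional image $\psi(V')$ with $\overline{\M^1_{g,k}}$ gives components of dimension \emph{at least} $2(k-1)$, never at most; the paper obtains the equality $\dim = 2(k-1)$ only for the specific component coming from the degeneration, because the limit has that dimension. And for (iii), you cannot invoke Brill--Noether generality: when $g>2(k-1)$ the curve $\widetilde C$ carries a $g^1_k$ with $\rho(g,1,k)<0$, so it is not Brill--Noether general, and it is not even general in $\M^1_{g,k}$ (the image of the component has dimension $2(k-1)$, far below $\dim\M^1_{g,k}=2g+2k-5$), so no ``general $k$-gonal curve'' property is available. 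The paper instead reads off simple ramification and non-neutrality of the nodes directly from the limit admissible cover on the stable model ($\PP^1$ with $g$ pairs of points of a general $g^1_k$ identified), where both properties are explicit. Your contradiction argument for non-neutrality is plausible in outline but, like the existence construction, is not actually carried out.
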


As for statement (i), we note that nonemptiness when $p$ is even,
$\delta \leq \frac{p}{4}$ and $k \geq \frac{p}{2} +1-\delta$ has been
proved by Voisin \cite[pf. of Cor.~1, p.~366]{vo} by a totally
different approach.

Theorem \ref{thm:main} yields that, for fixed $\delta>0$, the {\it general} curve in (some component of) the Severi variety $V_{|H|,\delta}(S)$ has the gonality of a general genus $g$ curve, i.e.  $\lfloor(g+3)/2\rfloor$, but, for all $k$ satisfying \eqref{eq:boundintro},  there are substrata of dimension $2(k-1)$ of curves of lower gonality $k$. This is in contrast to the case $\delta=0$, where the gonality is constant and equal to $\lfloor(p+3)/2\rfloor$. 
Parts (ii) and (iii) of  Theorem \ref {thm:main} also yield that $\widetilde{C}$ has gonality $\min\{k,\lfloor(g+3)/2\rfloor\}$
 and enjoys properties of a general curve of such a gonality.

The proof of Theorem \ref {thm:main} relies on a rather delicate degeneration argument. Indeed, the general $(S,H)$ can be specialized to the case where $S$ contains a smooth rational curve $\Gamma$ of degree $p$ and such that $S$ can be embedded into $\PP^{2p-1}$ by the linear system $|H+\Gamma|$. This $S$ can be in turn  degenerated to a union $R$ of two smooth rational normal surfaces $R_1\cong \PP^1 \x \PP^1$ and $R_2 \cong \FF_2$
intersecting transversally along a smooth elliptic curve of degree $2p$ 
and such that the rational curve $\Gamma$ specializes to the negative section 
$\mathfrak{s}_2$ 
of $R_2$ and $H$ specializes to the line bundle $\O_R(1)\otimes \O_R(-\mathfrak{s}_2)$. This is proved in \S\ref{ssec:unionscr}.  In \S\S \ref  {sec:chains} and \ref {sec:limnod} we describe nodal curves on the limit surface $R$ that fill up  limit components of $V_{|H|,\delta}(S)$. 
In \S \ref {S:kn} we describe possible limits  of $V^ k_{|H|,\delta}(S)$; this latter analysis is one of the crucial points in the paper, and, to the best of our knowledge, is a nontrivial novelty.  If nonempty, these limit varieties have the {expected dimension}, and this yields nonemptiness and expected dimension for the general $S$ containing a smooth rational curve of degree $p$ as above, and therefore also for the general $S$ as in the statement of the theorem (see Proposition \ref {degargrel}). In \S
\ref{sec:finalexistence} we show  nonemptiness of the limit varieties and the required properties for the $g^1_k$ in the range \eqref {eq:boundintro}. 

We note that our two--step degeneration seems to be new and has the  property of independent interest that the stable model of the general hyperplane section of the limit surface is an irreducible rational nodal curve. We believe that this technique can be useful in other contexts. Specifically, for $K3$ surfaces this can be used to study Severi varieties of nodal curves, also in $|nH|$ for $n>1$.

Besides its intrinsic interest for Brill-Noether theory and moduli problems, the subject of this paper is  related to Mori theory and rational curves on the $2k$-dimensional hyperk\"ahler manifold $\Hilb^ k(S)$ parametrizing $0$-dimensional length $k$-subschemes of the $K3$ surface $S$.
A curve on  $S$ with a $g^1_k$ on its normalization determines a rational curve on $\Hilb^ k(S)$.
For the importance of rational curves on hyperk\"ahler  manifolds see, e.g., \cite{H1,H2,Bou,HT,HT2,HTint,W1,W2,WW} and \S\;\ref{S:ratcur2}.  In particular, rational curves determine the nef and ample cones.

The curves on $S$ in Theorem \ref{thm:main} determine a family of rational curves in $\Hilb^ k(S)$ of dimension $2(k-1)$, which is the expected dimension of any family of rational curves on a $2k$-dimensional  hyperk{\"a}hler manifold. In  \S \ref {S:ratcur} we determine their classes in $N_1(\Hilb^ k(S))$ (see Lemma \ref{classR}); in this computation the properties of the $g_k^ 1$ stated in part (iii) of Theorem \ref {thm:main}  play an essential role. 
The lower $\delta$ is, the closer the class is to 
the boundary of the Mori cone. As a consequence, we obtain necessary conditions for a divisor in $\Hilb^ k(S)$ to be nef or ample (see Proposition \ref{prop:conoampio}). For infinitely many $p,k$ we prove that the classes of the rational curves in $\Hilb^ k(S)$ we obtain from Theorem \ref{thm:main}  with $\delta$ minimal satisfying \eqref{eq:boundintro} (which we call {\it optimal classes})  generate extremal rays of the Mori cone of $\Hilb^k(S)$ (see Corollary \ref{cor:intminima} and Proposition \ref{prop:intzero2}). After the appearance of the first version of this paper on the web, this has been verified also in  the cases 
$p \leq 2(k-1)$, where
$\delta=0$, in \cite{BM} (see Proposition \ref{prop:BM}). To determine the Mori cone of $\Hilb^ k(S)$ for all $p,k$ one would have to extend our results to the nonprimitive cases $|nH|$, $n>1$. This is a difficult task, but should in principle be possible to treat with similar methods. We plan to do this in future research. 

In \S  \ref {S:ratcur2} we also relate our work to some interesting conjectures of Hassett and Tschinkel on the Mori cone of $\Hilb^k(S)$ (see in particular Remark \ref{rem:HT}) and of  Huybrechts and Sawon on Lagrangian fibrations (see in particular Corollary \ref{cor:neclagfib}). \medskip

Throughout this paper we work over $\mathbb C$. As usual, and as we did already in this Introduction, we may sometimes abuse notation and identify divisors with the corresponding line bundles, indifferently  using  the additive and the multiplicative notation. \medskip

\subsection*{Acknowledgements} This paper is the result of a long term collaboration between the authors. A substantial part of it was accomplished while they were both visiting the Mittag-Leffler Institute in Stockholm during the programme {\it Algebraic Geometry with a view towards applications} 
in the spring of 2011. Both authors are very grateful to this institution for the warm and fruitful hospitality. 

The authors  thank K.~Ranestad and F.~Flamini for useful conversations, L.~Benzo for comments on the first version of this paper, B.~Hassett for useful correspondence, A.~Bayer and E.~Macr{\`i} for informing us about and sending us their recent preprints \cite{BM,BM-MMP} and for correspondence about them,
and the referee for  the careful reading of the paper and  useful suggestions.

The first author is a member of the G.N.S.A.G.A. of the Istituto Nazionale di Alta Matematica ``F. Severi''.

\section{Severi varieties, $K3$ surfaces and $k$-gonal loci} \label{S:Sevvar}


\subsection{Severi varieties and $k$-gonal loci} Let $S$ be a connected, projective surface with normal crossing singularities and let $|H|$ be a base point free,
complete linear system of Cartier divisors on $S$ whose general element is a connected curve $H$ with at most
nodes as singularities, located at the singular points of $S$. We will set $p=p_a(H)$.

For any integer $0 \leq \delta \leq p$, we denote by
$V_{|H|, \delta}(S)$  the locally closed 
subscheme of $|H|$ parametrizing the universal family of
curves $C\in \vert H\vert$ having only nodes as singularities, exactly $\delta$ of them (called the \emph{marked nodes}) off
the singular locus of $S$, and such that the partial normalization $\widetilde C$ at these 
$\delta$ nodes is connected (i.e., the marked nodes are \emph{not disconnecting nodes}). We set $g=p-\delta=p_a(\widetilde C)$. 
If $S$ is smooth the $V_{|H|, \delta}(S)$'s  are called {\em Severi
varieties} of $\delta$-nodal curves in $|H|$ on
$S$. We use the same terminology in our more general setting.

Let $g \geq 3$ be an integer.  We denote by $\M_g$ the \emph{moduli
space (or stack)} of smooth curves of genus $g$, whose dimension is $3g-3$.
We recall that $\M_g$ is
quasi-projective and admits a projective compactification $\overline{\M_g}$, parametrizing all connected stable curves of arithmetic genus $g$.  

One has the \emph{moduli morphism}
\begin{equation}\label{eq:uno}
\xymatrix{\psi_{S, H,\delta} :V_{|H|, \delta}(S) \ar[r] &
\overline{\M_g}}
\end{equation}
sending $C\in V_{|H|,\delta}(S)$ to the isomorphism class of the stable model $\overline C$
of the partial normalization $\widetilde C$ of $C$ at the $\delta$ marked nodes.  We write $\psi$
rather than $\psi_{S,H,\delta}$ if no confusion arises.
If $\psi$ is generically finite to its image, 
we say that $V_{|H|,\delta}(S)$ has \emph{maximal number of moduli} $g$. 

One can consider the stratification of $\M_g$ in terms of
gonality
\[
\M^1_{g,2} \subset \M^1_{g,3} \subset \ldots \subset \M^1_{g,k}
\subset \ldots \subset \M_g,
\]
where
\[
\M^1_{g,k}:= \Big \{[Y] \in \M_g
| \; Y \; \mbox{possesses a} \;
g^1_k \Big \},
\]
called the $k$-{\em gonal locus} in $\M_g$,  is irreducible, of
dimension $2g + 2 k -5$ when $g\ge 2(k-1)$, whereas
$\M^1_{g,k}= \M_g$ when $g\le 2(k-1)$
(see e.g. \cite {AC}). Recall that $\psi(C)\in \overline{\M^1_{g,k}}$ if and only if the partial normalization $\widetilde C$ of $C$ 
at the $\delta$ marked nodes is stably equivalent to a curve that is the domain of an admissible cover of degree $k$ to a stable pointed curve of genus $0$ (see  \cite[Theorem (3.160)]{HM}).
For any 
integer $k \geq 2$, we define 
\begin{equation*} \label{eq:defvk}
V_{|H|, \delta}^k(S):=  \Big \{C \in V_{|H|, \delta}(S) \; | \;
\psi(C)\in \overline{\M^1_{g,k}}\Big \},
\end{equation*}
which has a natural scheme structure.
This is called the \emph{$k$-gonal locus} inside $V_{|H|, \delta}(S) $.

\subsection{$K3$ surfaces} \label{sub:k3} We will mainly consider the case in which $S$ 
is a smooth, projective $K3$ surface, endowed with a globally generated \emph{primitive}, i.e. indivisible,
divisor $H$ with  $p=p_a(H) \geq 2$.  We call $(S,H)$
a \emph{primitive (or primitively polarized)  $K3$ surface of genus} $p$. We denote by 
$\K_p$ the \emph{moduli space (or stack)} of primitive $K3$ surfaces of genus $p$,
which is smooth and irreducible of dimension $19$, and the general element 
$(S,H)$ is such that $H$ is very ample. Furthermore, 
${\rm Pic}(S)$ is generated by the class of $H$ for all $(S,H)$ outside a countable union of Zariski closed proper subsets (the Noether-Lefschetz divisors).

If $V_{|H|, \delta}(S)  \neq \emptyset$, then it is {\em
regular}, i.e.~it is smooth and of the \emph{expected dimension} $g$. 
Indeed, the marked, not disconnecting  nodes of the curves in $V_{|H|, \delta}(S)$ impose
independent conditions to the linear system $|H|$ (see e.g. \cite{CS}).
If $V_{|H|, \delta}(S) \neq \emptyset$ and $\delta' < \delta$, then 
$V_{|H|,\delta}(S)\subset \overline {V_{|H|, \delta'}}$.

\begin{remark}\label{rem:sing} The latter holds for $V_{|H|, \delta}(S)$ also when $S$ is a connected surface with  local normal crossing
singularities, trivial dualizing bundle, $h^1(S,\mathcal O_S)=0$ and $H$
a globally generated, primitive divisor on $S$.  Indeed, the usual  arguments (like  in  \cite{CS}) apply with no change.  
 \end{remark}

By a result of
Mumford's (cf. \cite[Appendix]{MM}), for all $\delta\le p$ the 
Severi varieties $V_{|H|, \delta}(S) $ are nonempty. Chen extended this  to Severi varieties $V_{|mH|, \delta}(S)$
with $m>1$ (cf. \cite{C}). 

The following proposition is related to the rigidity results in \cite{halic}.

\begin{prop}\label{prop:fff} Let $(S,H)\in \K_p$. The differential of the moduli morphism $\psi_{S,H,\delta}$ is everywhere injective, hence all components of $V_{\vert H\vert,\delta}(S)$ have maximal number of moduli.
\end{prop}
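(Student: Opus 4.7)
The plan is to identify the kernel of $d\psi_{S,H,\delta}$ with a concrete cohomology group and then show it vanishes using the K3 structure. Let $f\colon \widetilde C \to S$ be the composition of the partial normalization $\widetilde C \to C$ at the $\delta$ marked nodes with the inclusion $C\hookrightarrow S$; this is an immersion, since the two branches of $C$ at each marked node meet transversally in $S$. Because $\det T_S\cong \O_S$ on any K3 surface, adjunction gives $N_f\cong K_{\widetilde C}$, and we obtain the short exact sequence
\[
0\to T_{\widetilde C}\to f^*T_S\to K_{\widetilde C}\to 0.
\]
The standard equisingular normal sheaf identification $\omega_C\otimes \mathcal I_N\cong \nu_*\omega_{\widetilde C}$ (with $\nu\colon\widetilde C\to C$ and $N$ the $\delta$-node subscheme) identifies $T_{[C]}V_{|H|,\delta}(S)$ with $H^0(\widetilde C, K_{\widetilde C})$, of dimension $g=p-\delta$. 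Under the same dictionary, $d\psi_{S,H,\delta}$ at $[C]$ coincides with the connecting homomorphism $\partial\colon H^0(K_{\widetilde C})\to H^1(T_{\widetilde C})$ in the long exact sequence of the above sequence.

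It follows that
\[
\ker d\psi_{S,H,\delta} \;\cong\; H^0(f^*T_S)\bigm/ H^0(T_{\widetilde C}),
\]
so the proposition is equivalent to $H^0(f^*T_S)=H^0(T_{\widetilde C})$ (as subspaces of $H^0(f^*T_S)$). For $g\ge 2$ this becomes the vanishing $H^0(f^*T_S)=0$; the cases $g\le 1$ can be treated by direct inspection, using that $f^*T_S$ has rank $2$ and trivial determinant on $\widetilde C$, and that $H^0(T_{\widetilde C})$ has dimension $3$ when $g=0$ and $1$ when $g=1$.

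For the main case $g\ge 2$ the approach is via the restriction sequence
\[
0\to T_S(-C)\to T_S\to T_S|_C\to 0
\]
on $S$. Since $T_S\cong \Omega_S$ (because $\det T_S=\O_S$ and $T_S$ has rank $2$) and $H^0(S,\Omega_S)=0$, one gets $H^0(T_S|_C)\hookrightarrow H^1(T_S(-C))$. Using $f^*T_S=\nu^*(T_S|_C)$ and a local computation at the $2\delta$ preimages of the marked nodes (where any extra sections arising from the partial normalization come from $T_{\widetilde C}$), one reduces the vanishing of $H^0(f^*T_S)$ to that of $H^0(T_S|_C)$.

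The main obstacle is then the rigidity statement $H^0(T_S|_C)=0$: equivalently, the Kodaira--Spencer map of the linear system $|H|$ at $[C]$ is injective. This is a K3-specific property of primitively polarized surfaces, closely related to the rigidity results of \cite{halic}; the argument should combine the restriction sequence above with the symplectic self-duality $T_S\cong T_S^\vee$ and, if needed, an analysis of $H^1(\Omega_S(-H))$ in terms of the embedding $S\hookrightarrow \PP^p$ induced by $|H|$. Once the smooth case is in hand, the extension to $\delta>0$ amounts to bookkeeping at the preimages of the nodes.
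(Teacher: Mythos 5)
Your reduction is the same as the paper's: identify $d\psi$ with the coboundary of $0\to T_{\widetilde C}\to f^*T_S\to N_f\to 0$ and reduce to $H^0(\widetilde C,f^*T_S)=0$. But the two steps you leave as ``a local computation'' and ``the argument should combine\dots'' are exactly where all the work lies, and both are genuine gaps. First, the passage from $H^0(C,{T_S}|_C)=0$ to $H^0(\widetilde C,f^*T_S)=0$ is not local. Since $f^*T_S=\nu^*({T_S}|_C)$, one has an exact sequence $0\to {T_S}|_C\to \nu_*f^*T_S\to \Delta\to 0$ with $\Delta$ a skyscraper of length $2\delta$, so $h^0(f^*T_S)$ could a priori be as large as $2\delta$ even when $h^0({T_S}|_C)=0$: a section of $f^*T_S$ taking different values at the two preimages of a marked node has no reason to be tangent to $\widetilde C$, so your parenthetical claim is unjustified. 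The paper closes this gap by a global argument on the blow-up $\varphi:\widetilde S\to S$ at the nodes: the image $\mathfrak A$ of $H^1(T_{\widetilde S}(-\widetilde C))\to H^1(T_{\widetilde S})$, interpreted as first-order deformations of $\widetilde S$ fixing $\widetilde C$, meets the $2\delta$-dimensional image of $H^0(\oplus_i\O_{\PP^1}(1))$ (the deformations moving the exceptional curves $E_i$) only in $0$; this is what forces $H^1(\varphi^*T_S(-\widetilde C))\to H^1(\varphi^*T_S)$ to be injective and hence $h^0(f^*T_S)=0$. Nothing of this sort appears in your proposal.

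Second, you do not prove the key vanishing $H^0(C,{T_S}|_C)=0$; you only record the injection $H^0({T_S}|_C)\hookrightarrow H^1(T_S(-C))$ and then defer to ``symplectic self-duality and, if needed, an analysis of $H^1(\Omega_S(-H))$.'' That injection by itself yields nothing, because $H^1(T_S(-C))\cong H^1(\Omega_S(-C))$ is not zero in general (for instance, when $H^0({T_S}|_C)=0$ it maps isomorphically onto the kernel of $H^1(T_S)\cong\CC^{20}\to H^1({T_S}|_C)$, which is nonzero for small $p$), and no vanishing theorem on a K3 hands you the conclusion. The paper's actual argument is concrete and unavoidable: embed $S\subset\PP^p$ by $|H|$, use $0\to {T_S}|_C\to {T_{\PP^p}}|_C\to {N_{S/\PP^p}}|_C\to 0$, and prove that $\gamma:H^0(C,{T_{\PP^p}}|_C)\to H^0(C,{N_{S/\PP^p}}|_C)$ is injective via the Euler sequence (computing $h^0({T_{\PP^p}}|_S)=p^2+2p$, $h^0({T_{\PP^p}}|_S(-C))=p+1$, $h^0({T_{\PP^p}}|_C)=p^2+p-1$) together with the geometric identification of $H^0({T_{\PP^p}}|_S(-C))$ with the infinitesimal projectivities fixing pointwise the hyperplane containing $C$. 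This diagram chase is the heart of the proof and is missing from your proposal; as written, the proposal is a plausible outline of the paper's strategy rather than a proof.
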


\begin{proof} Let $C$ be a curve in $V_{\vert H\vert,\delta}(S)$
and let $f:\widetilde C\to C$ be the normalization at the $\delta$ nodes. We have the following exact sequence
\[
\xymatrix{
0 \ar[r] &  T_{\widetilde C} \ar[r] &  f^ *(T_S) \ar[r] &  N_f \ar[r] &  0,
}
\]
which defines the normal sheaf $N_f$ to the map $f$. The differential of $\psi$ at ${\widetilde C}$ is the coboundary map  $H^ 0({\widetilde C},N_f)\to H^ 1({\widetilde C},T_{\widetilde C})$. Hence it suffices to prove that
$h^ 0({\widetilde C},f^ *(T_S))=0$, i.e.~that $h^ 0({\widetilde C},\varphi^ *(T_S)_{\vert \widetilde C})=0$, where $\varphi: \widetilde S\to S$ is the blow-up of $S$ at the nodes of $C$. Denote by $E_i$ the exceptional divisors, with $1\le i\le \delta$. Consider the diagram with exact rows and  columns
\begin{equation}\label {eq:3}
\xymatrix{
    &    0 \ar[d] & 0 \ar[d] & 0 \ar[d] &  \\
0  \ar[r] &   T_{\widetilde S}(-\widetilde C) \ar[d] \ar[r] & \varphi^*(T_S)(-\widetilde C) \ar[d] \ar[r] &\oplus_{i=1}^ \delta  \O_{\PP^ 1}(-1) \ar[d]  \ar[r] & 0 \\
0  \ar[r] &   T_{\widetilde S}\ar[d] \ar[r] & \varphi^*(T_S)\ar[d] \ar[r] &  \oplus_{i=1}^ \delta   \O_{\PP^ 1}(1)\ar[d]  \ar[r] & 0 \\
0\ar[r]& {T_{\widetilde S}}_{|\widetilde C}\ar[d] \ar[r] & \varphi^*(T_S)_{\vert \widetilde C}\ar[d] \ar[r] & \Delta\ar[d]  \ar[r] &0 \\
&0 & 0& 0& }
\end{equation}
where $\Delta\cong \CC^ {2\delta}$ is a skyscraper sheaf of rank 1 supported at the $2\delta$ intersections  of $\widetilde C$ with the $E_i$'s and the rightmost sheaves in the first two rows are supported on $E_i\cong \PP^ 1$, for $1\le i\le \delta$. One has $h^0(\widetilde S, \varphi^*(T_S))=h^0(S, T_S)=0$ and $H^1(\widetilde S,  \varphi^*(T_S)(-\widetilde C) )\cong H^1(\widetilde S,T_{\widetilde S}(-\widetilde C))$. Grant for the time that
\begin{equation}
  \label{eq:inj}
  H^ 0(\widetilde C, {T_{\widetilde S}}_{|\widetilde C})=0.
\end{equation}
By \eqref {eq:inj}, 
the map $ H^ 1(\widetilde S,T_{\widetilde S}(-\widetilde C))\to H^ 1(\widetilde S, T_{\widetilde S})$ is injective. Its image $\mathfrak A$ corresponds to first order  deformations of $\widetilde S$ that keep $\widetilde C$ fixed. These deformations do not move the $E_i$'s, and therefore $\mathfrak A$ intersects the image of 
$H^ 0(\widetilde S, \oplus_{i=1}^ \delta   \O_{\PP^ 1}(1) )\to H^1(\widetilde S,  T_{\widetilde S})$ in $(0)$. This implies that the map $H^ 1(\widetilde S, \varphi^*(T_S)(-\widetilde C)) \to H^1(\widetilde S,  \varphi^*(T_S))$ is injective and  $h^0(\widetilde C, \varphi^*(T_S)_{\vert \widetilde C})=0$ follows. 

We now prove \eqref{eq:inj}. A local computation shows that $\varphi_*({T_{\widetilde S}}_{|\widetilde C})={T_S}_{|C}$, so it suffices to prove that
\begin{equation}\label{eq:1}
h^ 0(S, {T_S}_{|C})=0.
\end{equation}
Consider $S$ embedded in $\PP^p$ by $|H|$. By the exact sequence
\[
\xymatrix{
0\ar[r] & {T_S}_{|C}\ar[r] & {T_{\PP^ p}}_{|C}\ar[r] & {N_{S/\PP^ p}}_{|C}\ar[r] & 0,
}\]
to prove \eqref{eq:1} one has to prove that the map
$\gamma: H^ 0(C, {T_{\PP^ p}}_{|C}) \to H^0(C, {N_{S/\PP^ p}}_{|C})$
is injective. 

The cohomology of the Euler sequence
\[ \xymatrix{
0\ar[r] & \O_S\ar[r] & H^ 0(S,\O_S(C))^*\otimes \O_S(C)\ar[r] & {T_{\PP^ p}}_{|S} \ar[r] & 0
} \]
yields that
$ H^ 0(S,{T_{\PP^ p}}_{|S})\cong  H^ 0(\PP^ p,T_{\PP^ p})\cong \CC^ {p^ 2+2p}$
 is the tangent space to  ${\rm PGL}(p+1, \CC)$. Similarly
 $ H^ 0(S,{T_{\PP^ p}}_{|S}(-C) )\cong    \CC^ {p+1}$
 is the tangent space to the subgroup of ${\rm PGL}(p+1, \CC)$ that pointwise fixes the hyperplane in which $C$ lies.

Consider the long exact cohomology sequence associated to the Euler sequence for $C$, i.e.
\[
\xymatrix{
0\ar[r]&H^ 0(C, \O_C) \ar[r] & H^ 0(S,\O_S(C))^*\otimes H^ 0(C,\omega_C)\cong \CC^ {p(p+1)} \ar[r]& H^ 0(C,{T_{\PP^ p}}_{|C})  & \\
\ar[r] &  H^ 1(C, \O_C)\ar[r]& H^ 0(S,\O_S(C))^*\otimes H^ 1(C,\omega_C).
 }
\]
The map in the second row is dual to the surjective map
\[ \xymatrix{
H^ 0(S,\O_S(C))\otimes H^ 1(C,\omega_C)  \cong  H^ 0(C,\omega_C)\oplus \CC  \ar[r] &  H^ 0(C,\omega_C),
} \]
hence the last map in the first row  is surjective, so that $H^ 0(C,{T_{\PP^ p}}_{|C})\cong \CC^ {p^ 2+p-1}$.

Consider the commutative  diagram with exact rows and columns
\[
\xymatrix{
&& 0 \ar[d] & 0\ar[d]\\
&0\ar[r] & H^ 0(S, {T_{\PP^ p}}_{|S} (-C)) \cong \CC^ {p+1} \ar[d]  \ar[r]^{\hspace{0.6cm}\alpha}   & H^0(S, N_{S/\PP^ p} (-C)) \ar[d] \\
&0\ar[r] &H^ 0(S, {T_{\PP^ p}}_{|S})\cong \CC ^ {p^ 2+2p}   \ar[d] \ar[r]^{\hspace{0.6cm}\beta}  & H^0(S, N_{S/\PP^ p}) \ar[d] \\
&&H^ 0(C, {T_{\PP^ p}}_{|C}) \ar[d] \cong \CC^{p^ 2+p-1} \ar[r]^{\hspace{0.6cm}\gamma}  & H^0(C, {N_{S/\PP^ p}}_{|C} ) \\
&&0 &&
}
\]
Assume we have $x\in H^ 0(C, {T_{\PP^ p}}_{|C})$ such that $\gamma(x)=0$. Lift $x$ to $y\in H^ 0(S, {T_{\PP^ p}}_{|S})$. Then
$\beta(y)\in H^0(S, N_{S/\PP^ p} (-C))$. The above geometric interpretation  tells  us  that $y\in H^ 0(S, {T_{\PP^ p}}_{|S} (-C))$, proving that $x=0$, which implies the injectivity
of $\gamma$, hence \eqref{eq:1}. 
\end{proof}

Later we will need to consider the substack 
$\K'_p$ of $\K_p$ consisting of pairs $(S,H)$ such that $S$ contains a smooth rational curve $\Gamma$ satisfying $\Gamma\cdot H=p$. 

\begin{prop}\label{prop:ssub} For any $p\ge 2$,  $\K'_p$ is irreducible of codimension one in $\K_p$, and the general element $(S,H)\in \K'_p$ is such that $H$
is ample. Furthermore, $\Pic(S)\cong \ZZ[H]\oplus \ZZ[\Gamma]$ for all $(S,H)$ outside a countable union of Zariski closed proper subsets of $\K'_p$.
\end{prop}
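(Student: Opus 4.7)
The approach I would take is via Nikulin's theory of primitive lattice embeddings combined with the surjectivity of the period map and global Torelli for $K3$ surfaces.

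Let $L$ be the abstract rank-$2$ even lattice with basis $(h,\gamma)$ satisfying $h^2=2p-2$, $h\cdot \gamma=p$, and $\gamma^2=-2$; its determinant is $-(p^2+4p-4)$, strictly negative for $p\ge 2$, so $L$ has signature $(1,1)$. The first step is to invoke Nikulin's theorem on primitive embeddings of even lattices into even unimodular ones to conclude that $L$ admits a primitive embedding into the $K3$ lattice $\Lambda=U^{\oplus 3}\oplus E_8(-1)^{\oplus 2}$, unique up to the action of $O(\Lambda)$: the length of the discriminant group $L^\vee/L$ is at most $2$, well below the threshold $22-\rk L-2=18$ needed. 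Once this is in hand, the surjectivity of the period map together with global Torelli yields that the moduli stack $\N_L$ of $L$-polarized $K3$ surfaces (with $h$ mapped to the polarization) is irreducible of dimension $20-\rk L=18$, and the forgetful map $\N_L\to \K_p$ is finite onto a codimension-one irreducible subvariety.

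To identify this image with $\K'_p$, I would show next that on the dense open substack $\N_L^\circ\subseteq \N_L$ where $\Pic(S)=L$, the class $\gamma$ (rendered effective by Riemann--Roch, replacing $\gamma$ by $-\gamma$ if necessary) is represented by an irreducible smooth rational curve $\Gamma$. A hypothetical decomposition $\gamma=E_1+E_2$ into distinct prime $(-2)$-curves $E_i=a_ih+b_i\gamma\in L$, together with $a_1+a_2=0$, $b_1+b_2=1$, and $E_1^2=E_2^2=-2$, reduces (setting $a:=a_1=-a_2$) to the Diophantine identity $a^2(p^2+4p-4)=-3$, which has no real solution. Decompositions $\gamma=\sum n_iE_i$ with more components or higher multiplicities are ruled out analogously by noting that $H\cdot E_i\ge 1$ (since no $(-2)$-class in $L$ is orthogonal to $H$, as verified below) and $\sum n_i(H\cdot E_i)=p$, which bounds the number of terms. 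The complement of $\N_L^\circ$ in $\N_L$ is a countable union of proper closed Noether--Lefschetz subloci, indexed by the rank-$\ge 3$ primitive overlattices of $L$ in $\Lambda$; this yields the final assertion of the proposition.

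For the ampleness claim, recall that $\K_p$ parametrizes pairs $(S,H)$ with $H$ pseudo-ample (nef and big), so $H$ remains nef throughout $\K_p$ and in particular on $\K'_p$. A direct Diophantine computation shows no $(-2)$-class in $L$ is orthogonal to $H$: solving $H\cdot(ah+b\gamma)=0$ forces $(a,b)=k\bigl(p,-(2p-2)\bigr)$ when $p$ is odd and $(a,b)=k\bigl(p/2,-(p-1)\bigr)$ when $p$ is even, and substitution into $(ah+b\gamma)^2=-2$ produces respectively $k^2(2p-2)(p^2+4p-4)=2$ or $k^2(p-1)\bigl((p/2)^2+p-1\bigr)=1$, neither of which has a nonzero integer solution for $p\ge 2$. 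On a $K3$ surface, a nef divisor with nonzero intersection against every $(-2)$-class in $\Pic(S)$ is ample (via Nakai--Moishezon combined with Hodge index and adjunction). Hence $H$ is ample on the general element of $\K'_p$. The main technical hurdle is the irreducibility verification for $\gamma$ on $\N_L^\circ$; the remaining steps are routine applications of lattice theory and the period map.
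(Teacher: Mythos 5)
Your route --- Nikulin's embedding theorem, the period map and Torelli for lattice-polarized $K3$ surfaces, then Diophantine checks in the rank-two lattice $L=\ZZ h\oplus\ZZ\gamma$ --- is precisely the ``standard'' argument the paper invokes by citing \cite[Proposition (3.2)]{clm1}, \cite[Theorem 1.14.4]{nik} and \cite{Do}, and your determinant, orthogonality and ampleness computations are all correct. The one step that is genuinely under-argued is the irreducibility of $\Gamma$. Ruling out $\gamma=E_1+E_2$ with \emph{two reduced} $(-2)$-components is not the general case, and the general case is not ``analogous'': the components of an effective decomposition $\gamma=\sum n_iE_i$ need only satisfy $E_i^2\ge -2$ (not $=-2$), the multiplicities need not be $1$, and the bound $\sum n_i(H\cdot E_i)=p$ merely caps the number of summands by $p$, leaving a large, $p$-dependent family of configurations untouched. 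The sentence ``ruled out analogously'' is therefore doing all the work.

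The gap is fillable, and here is the reduction that actually closes it. If the member of $|\gamma|$ is not irreducible, then since $\gamma^2=-2<0$ some component $N$ satisfies $\gamma\cdot N<0$, hence $N^2<0$, hence $N^2=-2$; moreover both $N$ and $\gamma-N$ are nonzero effective, so (using your orthogonality check) $1\le N\cdot h\le p-1$. Writing $N=ah+b\gamma$ and $c:=-N\cdot\gamma=2b-ap\ge 1$, the condition $N^2=-2$ is equivalent to the Pell-type equation $c^2-a^2D=4$ with $D:=p^2+4p-4$, while $1\le N\cdot h\le p-1$ reads $2\le aD+pc\le 2p-2$. If $a=0$ then $N=\pm\gamma$, which violates the degree constraint; if $a\ne 0$ then $c\ge\sqrt{D+4}>p$, forcing $a\le -1$ from the upper bound, and squaring the lower bound $pc\ge 2+|a|D$ yields $(p-1)(p+1)\ge |a|D\bigl(|a|(p-1)+1\bigr)\ge pD$, which fails for every $p\ge 2$. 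This computation (or an equivalent one) must replace your two-component identity $a^2(p^2+4p-4)=-3$. A secondary point you gloss over (as does the paper) is why $\K'_p$ is irreducible rather than a union of images of moduli of $L'$-polarized surfaces for rank-two overlattices $L'\supseteq L$; this is where the discriminant-form analysis of \cite{nik} and \cite{Do} is actually needed.
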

\begin{proof} This is standard: the proof follows much the same arguments as, e.g., \cite [Proposition (3.2)]{clm1}, using \cite [Theorem 1.14.4]{nik} and \cite [pp. 271--2]{Do}.  
\end{proof}

We note that $\K'_p$ is a Noether-Lefschetz divisor in $\K_p$. 

\subsection{Universal Severi varieties and degenerations}\label{ssec:USV}

For any $p\ge 2$,
 and $0\le \delta \le p$, one can consider 
a stack $\V_{p,\delta}$ (see \cite [Proposition 4.8] {fkps}), called
the  \emph{universal Severi variety}, which is pure and smooth of
dimension $19+g$, 
endowed with a morphism $\phi_{p,\delta}: \V_{p,\delta}\to
\K_p$ and its fibres are so described
\[
\xymatrix@C=10pt{
\V_{p,\delta} \ar@{}[r]|(0.4){\supset} \ar[d]_{\phi_{p,\delta}}
& V_{\vert H\vert,\delta}(S) \ar[d] \\  
\K_p \ar@{}[r]|(0.4){\ni} & (S,H)
}
\]
Some fibers may be empty, but there is a dense open
substack $\K_p^ {\circ}$ of $\K_p$ over which the fibers are nonempty and the morphism $\phi_{p,\delta}: \V_{p,\delta}\to \K_p^ {\circ}$ is smooth on all components of
$\V_{p,\delta}$, each  dominating $\K_p^ {\circ}$. 

In a similar way one can consider the \emph{$k$-gonal universal locus}
$\V^ k_{p,\delta}\subseteq \V_{p,\delta}$. 

We will need this in a more general setting. Suppose
we have a proper flat family of surfaces 
$f :\SS\to \mathbb D$, where $\mathbb D$ is a disc.
Assume that:
\begin{itemize}
\item $\SS$ is smooth, endowed with a line bundle $\H$;
\item $f$ is smooth over $\mathbb D^ *=\mathbb D-\{0\}$;
\item if $t\in \mathbb D^ *$, then the fibre $S_t$ of $f$ over $t$ is a $K3$ surface;
\item the fibre $S_0$ of $f$ over $0$ is a local normal crossing divisor in $\SS$;
\item the line bundle $\H_t:=\H_{\vert S_t}$ determines a complete linear system
$\vert H_t\vert$ of dimension $p$ for all $t\in \mathbb D$ and $(S_t,H_t)\in \K_p$
for all $t\in \mathbb D^ *$.
\end{itemize}

Since $\V _{p,\delta}$ is functorially defined, we have
\emph{$f$-relative Severi varieties} $\phi_{f;p,\delta}: \V_{f;p,\delta}\to \mathbb D^ *$, 
with $\V_{f;p,\delta}$ locally closed in $\PP(f_*(\H))$, such that the fibre of $\phi_{f;p,\delta}$ over $t$ is $V_{\vert H_t\vert,\delta}(S_t)$ for all $t\in \mathbb D^ *$.   
We will drop the index $\delta$ when $\delta=0$. 

\begin{lemma}\label{lem:defo} Let $C_0\in \vert H_0\vert$
be an element of $V_{|H_0|, \delta}(S_0)$, with
 $\delta$ not disconnecting nodes $q_1,\ldots,q_\delta$ off the singular locus of $S_0$. 
Then $C_0$ sits in the closure of
$\V_{f;p,\delta}$ in $\PP(f_*(\H))$ and $\V_{f;p,\delta}$ dominates $\mathbb D$. 
\end{lemma}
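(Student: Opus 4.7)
The plan is to exhibit the central fiber Severi variety $V_{|H_0|,\delta}(S_0)$ as a proper sublocus of a larger extended relative Severi variety defined over all of $\mathbb D$, and then to compare local dimensions at $C_0$ in order to force dominance.

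First, I would verify that the central fiber $S_0$ meets all the hypotheses of Remark~\ref{rem:sing}: connectedness and local normal crossings are built into the setup; the triviality of $\omega_{S_0}$ follows because $\omega_{\SS/\mathbb D}$ is trivial on the $K3$ fibers over $\mathbb D^*$, hence on all fibers of the flat family by semicontinuity of $h^0$; and $h^1(\O_{S_0})=0$ then follows from the constancy of $\chi(\O_{S_t})=2$ together with $h^0(\O_{S_0})=1$ (connectedness) and $h^2(\O_{S_0})=h^0(\omega_{S_0})=1$. Hence Remark~\ref{rem:sing} applies, and $V_{|H_0|,\delta}(S_0)$ is smooth of dimension $g=p-\delta$ at $C_0$.

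Next, consider the extended relative Severi variety $\widehat{\V}\subset \PP(f_*\H)$ parametrizing, for $t\in\mathbb D$, curves in $|H_t|$ carrying $\delta$ marked non-disconnecting nodes off $\Sing(S_t)$. Its restriction over $\mathbb D^*$ coincides with $\V_{f;p,\delta}$, and its fiber over $0$ is $V_{|H_0|,\delta}(S_0)$. I would then bound $\dim_{C_0}\widehat{\V}\geq g+1$ as follows. The ambient $\PP(f_*\H)$ has dimension $p+1$. Near each marked node $q_i$ of $C_0$, the locus of curves in $\PP(f_*\H)$ having a node at some (varying) point in a small neighborhood of $q_i$ is a local component of the discriminant, hence a hypersurface. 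Locally at $C_0$, the subscheme $\widehat{\V}$ is cut out by these $\delta$ hypersurface conditions, one per marked node, so $\codim_{C_0}\widehat{\V}\leq \delta$ and $\dim_{C_0}\widehat{\V}\geq p+1-\delta=g+1$.

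Finally, compare dimensions: the fiber of $\widehat{\V}\to\mathbb D$ over $0$ at $C_0$ has dimension exactly $g$ by the first step, while $\widehat{\V}$ has local dimension $\geq g+1$ at $C_0$. Therefore every irreducible component of $\widehat{\V}$ passing through $C_0$ must dominate $\mathbb D$, which simultaneously shows that $C_0$ lies in the closure of $\V_{f;p,\delta}$ in $\PP(f_*\H)$ and that $\V_{f;p,\delta}$ dominates $\mathbb D$.

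The main obstacle is the rigorous justification of the codimension bound in the middle step, especially at a point $C_0$ lying in the possibly singular fiber $S_0$. In the purely smooth setting this is standard (e.g.\ \cite{CS}); here one may instead argue via deformation theory of the map from the partial normalization, namely by computing $h^0(\widetilde{C}_0, N_\nu)\geq g+1$ where $\nu:\widetilde{C}_0\to \SS$ is the partial normalization of $C_0$ at the $\delta$ marked nodes, exploiting the triviality of $\O_{S_0}(S_0)=N_{S_0/\SS}$ to reduce the computation to sheaves on $\widetilde{C}_0$ supported on the sole node-preservation datum. Either route yields the required inequality and completes the proof.
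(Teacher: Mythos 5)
Your proposal is correct and is essentially the paper's own argument: both rest on (a) the regularity of $V_{|H_0|,\delta}(S_0)$ at $C_0$ (the $\delta$ marked non-disconnecting nodes impose independent conditions, via Remark~\ref{rem:sing}), and (b) the fact that preserving those nodes cuts out at most $\delta$ conditions in the $(p+1)$-dimensional space $\PP(f_*\H)$, so the relative nodal locus has dimension $\geq g+1$ at $C_0$ and cannot lie in the central fibre. The paper packages step (b) infinitesimally, via the exact sequences $0 \to N'_{C_0/S_0} \to N_{C_0/S_0} \to \oplus_i T^1_{q_i}$ and $0 \to N'_{C_0/\SS} \to N_{C_0/\SS} \to \oplus_i T^1_{q_i}$, deducing surjectivity of the second restriction map from that of the first, which is exactly the linearization of your discriminant-hypersurface count.
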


\begin{proof} 
We have a commutative diagram with exact rows and  column
\[
\xymatrix{
&&& 0 \ar[d] &  \\
0\ar[r] &T_{[C_0]}(V_{|H_0|, \delta}(S_0)) \cong  H^0(C_0, N'_{C_0/S_0}) \ar[r] & \mathbb{C}^p \cong T_{[C_0]}(|H_0|)  \cong \hspace{-1.2cm} & H^0(C_0, N_{C_0/S_0}) \ar[d] \ar[r]^{\hspace{0.3cm}\alpha} &  \+_{i=1}^{\delta} T^1_{q_i} \ar@{=}[d] \\
 0\ar[r] & H^0(C_0, N'_{C_0/\mathcal S}) \ar[r] &\mathbb{C}^{p+1} \cong T_{[C_0]} (\mathcal{V}_{f;p})  \cong \hspace{-1.05cm} &H^0(C_0, N_{C_0/\mathcal S}) \ar[d] \ar[r]^{\hspace{0.3cm}\beta} &  \+_{i=1}^{\delta} T^1_{q_i} \\
&&& H^0(C_0, \O_{C_0})  \ar[d] & \\
&&& 0 &
}
\]
where $N'_{C_0/S_0}$ and $N'_{C_0/\mathcal S}$ are the equisingular normal sheaves at the marked nodes of $C$ in $S_0$ and $\mathcal S$, respectively. 
By hypothesis, $\alpha$ is onto, hence so is $\beta$. Thus $H^0(C_0, N'_{C_0/\mathcal S})$, which is the tangent space at $[C_0]$ of the space of equisingular deformations at the $\delta$ nodes of $C_0$ in $\mathcal S$, has dimension $\dim (V_{|H_0|, \delta}(S_0))+1$, and the assertion follows.
\end{proof}

\subsection{$K3$ surfaces and $k$-gonal loci}\label{ssec:loci}
Let  $(S,H)\in \K_p$ be general.  By Brill-Noether theory, 
$V^k_{|H|,\delta}(S) = V_{|H|,\delta}(S)$ if $\delta \geq p-2(k-1)$.

\begin{prop} \label{degarg}   Let $(S,H)$ be in $\K_p$. Assume $g:=p-\delta \ge 2(k-1)$. Then for any  irreducible component $V$ of $V_{|H|,
\delta}^k(S)$ one has $\dim (V) \geq 2(k-1)$.
\end{prop}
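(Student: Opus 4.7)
The plan is to leverage the moduli morphism $\psi = \psi_{S,H,\delta}: V_{|H|,\delta}(S) \to \overline{\M_g}$ from \eqref{eq:uno}, together with Proposition~\ref{prop:fff}, which ensures that the differential of $\psi$ is injective at every point. Consequently, near any closed point $[C]$, the morphism $\psi$ realises $V_{|H|,\delta}(S)$, analytically locally, as a subvariety of $\overline{\M_g}$. Since $V^k_{|H|,\delta}(S)=\psi^{-1}(\overline{\M^1_{g,k}})$ by construction, the proposition reduces to a standard codimension estimate in $\overline{\M_g}$.

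The numerical input I intend to invoke is the dimension formula $\dim\overline{\M^1_{g,k}}=2g+2k-5$ when $g\ge 2(k-1)$, which gives
\[
\codim_{\overline{\M_g}}\overline{\M^1_{g,k}}=(3g-3)-(2g+2k-5)=g-2(k-1).
\]
The case $g=2(k-1)$ is trivial, for then $\overline{\M^1_{g,k}}=\overline{\M_g}$ and necessarily $V=V_{|H|,\delta}(S)$, which already has dimension $g=2(k-1)$. Henceforth I assume $g>2(k-1)$.

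The main step is then as follows: pick a general $[C]\in V$; by the injectivity of the differential of $\psi$, in an analytic neighbourhood of $[C]$ the morphism $\psi$ identifies $V_{|H|,\delta}(S)$ with an analytic subvariety of $\overline{\M_g}$, and $V$ with an irreducible component of its scheme-theoretic intersection with $\overline{\M^1_{g,k}}$. Krull's Hauptidealsatz, applied to local defining equations of $\overline{\M^1_{g,k}}$ pulled back through $\psi$, then yields
\[
\codim_{V_{|H|,\delta}(S)}(V)\le\codim_{\overline{\M_g}}\overline{\M^1_{g,k}}=g-2(k-1),
\]
so that $\dim V\ge g-(g-2(k-1))=2(k-1)$, as required.

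I do not anticipate any substantive obstacle: the argument is purely dimension-theoretic, and the only nontrivial ingredient is the local immersion property of $\psi$ supplied by Proposition~\ref{prop:fff}. This is precisely what lets the statement hold for every $(S,H)\in\K_p$, with no assumption on $\Pic(S)$, and the bound $2(k-1)$ matches the heuristic dimension count alluded to in \S\ref{ssec:loci}.
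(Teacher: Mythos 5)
Your argument is correct and follows essentially the same route as the paper: both rest on the moduli map $\psi$, on Proposition~\ref{prop:fff} to guarantee that $\psi$ does not drop dimension, on $\dim \M^1_{g,k}=2g+2k-5$, and on the standard lower bound for the dimension of a component of an intersection inside $\overline{\M_g}$. The only (cosmetic) difference is that the paper runs the last step globally, writing $\dim W\ge \dim W'+\dim\overline{\M^1_{g,k}}-\dim\overline{\M_g}$ for $W$ a component of $\overline{\psi(V')}\cap\overline{\M^1_{g,k}}$, whereas you localize and invoke the Hauptidealsatz --- where, strictly speaking, you should use the intersection--dimension theorem via the diagonal in a smooth local cover of $\overline{\M_g}$, since $\overline{\M^1_{g,k}}$ is not known to be cut out locally by exactly $g-2(k-1)$ equations.
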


\begin{proof}  Consider the morphism $\psi$ in \eqref {eq:uno}.
Let $V$ be an irreducible component of $V_{|H|, \delta}^k(S)$ and 
$V'$ the $g$-dimensional,  irreducible component of 
$V_{|H|, \delta}(S)$ containing it, so that
\[ \emptyset \neq \psi(V) \sub \psi(V') \cap \M^1_{g,k}. \]
Set $W=\overline{\psi(V)}$ and $W'=\overline{\psi(V')}$, 
so that $W$ is an  irreducible component of $W' \cap \overline{\M^1_{g,k}}$ and,
by Proposition \ref {prop:fff}, one has $\dim(W')=g$. 
Then
\begin{eqnarray*}
\dim(V)  \geq  \dim (W) \geq  \dim(W')+ \dim (\overline {\M^1_{g,k}})- \dim (\overline{\M_{g}})=2(k-1).
\end{eqnarray*}\end{proof}

The proof of Proposition \ref {degarg} shows that the \emph{expected dimension}
of an irreducible component  of $V^ k_{\vert H\vert,\delta}(S)$ is $\min\{2(k-1),p-\delta\}$.

It is convenient to have a \emph{relative version} of Proposition \ref {degarg}.
Let $f: \SS\to \mathbb D$ be  as in \S \ref {ssec:USV}.  One can define the 
\emph{$f$-relative $k$-gonal locus} $\V^ k_{f; p,\delta}\subseteq \V_{f;p,\delta}$
over $\mathbb D^ *$.

\begin{prop}\label {degargrel}
Let $V_0$ be a  component of $V_{\vert H_0\vert,\delta}^ k(S_0)$.  If $\dim(V_0)=2(k-1)$, then $V_0$ is contained in an irreducible component $\V$ of $\V^ k_{f; p,\delta}$
dominating $\mathbb D^ *$, with $\dim(\V)=\dim(V_0)+1$.
\end{prop}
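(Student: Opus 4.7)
The plan is to embed $V_0$ into a fibre of a $\bigl(2(k-1)+1\bigr)$-dimensional subvariety of the ambient relative Severi variety and to force this subvariety to dominate $\mathbb D$ via a dimension comparison with $V_0$.

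First, pick a general point $C_0\in V_0$. By Lemma \ref{lem:defo}, $C_0$ lies in the closure $\bar{\W}$ in $\PP(f_*\H)$ of some irreducible component $\W$ of $\V_{f;p,\delta}$. Since every fibre $\W_t$ over $t\in \mathbb D^*$ is a union of components of $V_{|H_t|,\delta}(S_t)$, all of dimension $g$ by the regularity recalled in \S\ref{sub:k3}, $\bar{\W}$ has dimension $g+1$ and dominates $\mathbb D$.

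Next, extend the moduli morphism in \eqref{eq:uno} to $\bar\psi:\bar{\W}\to \overline{\M_g}$: every curve involved has only nodes and its partial normalization at the marked nodes admits a well-defined stable model, and on the fibre over $0$ the extension restricts to $\psi_{S_0,H_0,\delta}$. By Proposition \ref{prop:fff}, $\bar\psi$ is generically finite on every fibre $\bar{\W}_t$ with $t\in\mathbb D^*$. Since $\overline{\M^1_{g,k}}$ is a closed substack of the smooth Deligne--Mumford stack $\overline{\M_g}$ of codimension $g-2(k-1)$, pulling back local equations shows that every irreducible component of $\bar\psi^{-1}(\overline{\M^1_{g,k}})$ in $\bar{\W}$ has codimension at most $g-2(k-1)$, hence dimension at least $2(k-1)+1$. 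Let $\V$ be an irreducible component of this preimage passing through $C_0$, so that $V_0\subset \V$ and $\dim \V\ge 2(k-1)+1$.

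It remains to show that $\V$ dominates $\mathbb D$ and that its dimension is exactly $2(k-1)+1$. If $\V\subset f^{-1}(0)$, then $\V$ would be an irreducible subvariety of $V^k_{|H_0|,\delta}(S_0)$ of dimension $\ge 2(k-1)+1$ strictly containing the component $V_0$, contradicting the maximality of $V_0$. Hence $\V\to\mathbb D$ is dominant, and since $\mathbb D$ is a smooth curve and $\V$ is irreducible, this morphism is flat with fibres of pure dimension $\dim \V-1$. Now $\V_0\subset V^k_{|H_0|,\delta}(S_0)$, and $V_0$ is a component of $\V_0$ (being maximal in $V^k_{|H_0|,\delta}(S_0)\supset \V_0$), giving $\dim \V=\dim V_0+1=2(k-1)+1$. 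Finally, $\V\cap f^{-1}(\mathbb D^*)$ is the required irreducible component of $\V^k_{f;p,\delta}$. The main delicate point is the codimension estimate in the preimage, which rests on the smoothness of $\overline{\M_g}$ as a Deligne--Mumford stack (so that local equations for $\overline{\M^1_{g,k}}$ pull back to local equations on an \'etale atlas over $\bar{\W}$); ensuring that $\bar\psi$ extends across the central fibre is straightforward from the nodal hypothesis on all the curves involved.
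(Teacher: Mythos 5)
Your strategy is the one the paper intends (its proof of Proposition \ref{degargrel} is literally ``similar to the proof of Proposition \ref{degarg}, using Lemma \ref{lem:defo}''): put $V_0$ inside a $(g+1)$-dimensional relative Severi variety dominating $\mathbb D$, pull back $\overline{\M^1_{g,k}}$ under the moduli map to produce a component of dimension at least $2(k-1)+1$ containing $V_0$, and rule out verticality by the maximality of $V_0$. The outline is right, but there is a recurring gap in how you set it up: you work with the full closure $\bar{\W}$ of a component of $\V_{f;p,\delta}$ in $\PP(f_*(\H))$. The central fibre of that closure is a flat limit and may contain curves that are \emph{not} $\delta$-nodal (worse singularities, nodes on the singular locus of $S_0$, nonreduced or disconnecting limits). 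Consequently (i) your claim that $\bar\psi$ extends across the central fibre because ``every curve involved has only nodes'' is unjustified, and (ii) the implication ``$\V\subset f^{-1}(0)\Rightarrow \V\subset V^k_{|H_0|,\delta}(S_0)$'', which you use both to get the contradiction with the maximality of $V_0$ and, at the end, to see that $V_0$ is a component of $\V_0$ and conclude $\dim\V=\dim V_0+1$, does not follow: $\V$ could consist mostly of degenerate limit curves in $\bar{\W}_0\setminus V_{|H_0|,\delta}(S_0)$, and then the maximality of $V_0$ inside $V^k_{|H_0|,\delta}(S_0)$ gives no contradiction. The fix is standard but must be stated: replace $\bar{\W}$ by the locally closed locus $\U\subset\overline{\V_{f;p,\delta}}$ of curves that are $\delta$-nodal with marked nodes off the singular locus and connected partial normalization. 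The tangent-space computation in the proof of Lemma \ref{lem:defo} shows $\U$ is smooth of pure dimension $g+1$ along $V_{|H_0|,\delta}(S_0)$, hence contains $V_0$ and is irreducible near it; the moduli map is defined on all of $\U$; and $\U\cap f^{-1}(0)\subseteq V_{|H_0|,\delta}(S_0)$ by construction, which is exactly what your two appeals to maximality need.

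A smaller point: a closed substack of codimension $c$ in a smooth stack is in general \emph{not} cut out locally, even set-theoretically, by $c$ equations, so ``pulling back local equations'' is not a valid justification for the codimension bound on $\bar\psi^{-1}(\overline{\M^1_{g,k}})$. The bound itself is true and standard: either realize $\bar\psi^{-1}(\overline{\M^1_{g,k}})$ as the intersection of the graph of $\bar\psi$ with $\U\times\overline{\M^1_{g,k}}$ and use that the diagonal of the smooth $\overline{\M_g}$ is a local complete intersection, or argue as in the paper's proof of Proposition \ref{degarg}: push forward by $\psi$, which is generically finite on each fibre over $\mathbb D^*$ by Proposition \ref{prop:fff}, and apply the intersection--dimension theorem inside the smooth $\overline{\M_g}$.
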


\begin{proof}
  Similar to the proof of Proposition \ref {degarg}, using Lemma \ref{lem:defo} (see also \cite [Prop.~5.11 and its proof] {fkp2}).
\end{proof}


\section{Rational curves in the Hilbert scheme of points of a $K3$ surface} \label{S:ratcur}


If $S$ is a $K3$ surface, then $\Hilb^k(S)$ is a 
{\it hyperk\"ahler
manifold}, also called an {\it irreducible symplectic manifold} 
(see e.g. ~\cite{B,H1}). The cohomology group
$H^2(\Hilb^k(S), \mathbb Z)$ is endowed with the
{\em Beauville-Bogomolov quadratic form}  $q$ and one has 
the orthogonal decomposition 
\begin{equation} \label{eq:decH2}
H^2(\Hilb^k(S), \mathbb Z) \cong H^2(S, \mathbb Z) \oplus_{\perp} \mathbb Z [\mathfrak{e}_k],
\end{equation}
where $\Delta_k:=2\mathfrak{e}_k$ is the class of the divisor (still denoted by $\Delta_k$) parametrizing nonreduced
$0$-dimensional subschemes  \cite{B}. Equivalently, 
$\Delta_k$ is  the exceptional divisor of the
\emph{Hilbert-Chow morphism} $\mu_k: \Hilb^k(S) \khpil \Sym^k(S)$. 
The embedding of $H^2(S, \mathbb Z)$ into $H^2(\Hilb^k(S), \mathbb Z)$ in 
\eqref{eq:decH2} is given by sending a class $F \in H^2(S, \mathbb Z)$
to the class in $H^2(\Hilb^k(S), \mathbb Z)$ determined by all subschemes whose support intersects a representative of $F$. By abuse of notation we  will still denote  by $F$ this class in $H^2(\Hilb^k(S), \mathbb Z)$. The restriction of the Beauville-Bogomolov form to $H^2(S, \mathbb Z)$ is  the cup product on
$S$, and $q(\mathfrak{e}_k)=-2(k-1)$. Accordingly,
\eqref{eq:decH2} induces an  orthogonal decomposition (see  \cite{B})
\begin{equation} \label{eq:decpic}
\Pic (\Hilb^k(S)) \cong \Pic (S)  \oplus_{\perp} \mathbb Z [\mathfrak{e}_k].
\end{equation}

Given a primitive class $\alpha\in H_2(\Hilb^k(S), \mathbb Z)$, there exists a unique class  $w_{\alpha}\in
H^2(\Hilb^k(S), \mathbb Q)$ such that $\alpha \cdot v=q(w_{\alpha},v)$, for all $v\in
H^2(\Hilb^k(S), \mathbb Z)$, and one sets (cf. e.g. \cite{HTint})
\begin{equation}\label{eq:qalfa}
q(\alpha):= q(w_{\alpha}).
\end{equation}
This gives a $\mathbb Q$-valued form on homology, and we have
\begin{equation}\label{eq:cacca}
H_2(\Hilb^k(S), \mathbb Z) \cong H_2(S, \mathbb Z) \oplus_{\perp} \mathbb Z [\mathfrak{r}_k],
\end{equation}
where $\mathfrak{r}_k$ is the homology class orthogonal to $H^2(S, \mathbb Z)$ and satisfying $\mathfrak{e}_k \cdot \mathfrak{r}_k=-1$, see e.g. \cite[\S 1]{HT}.
As explained in \cite[Ex. 4.2]{HTint}, $\mathfrak{r}_k$ is the class of a fiber of the Hilbert-Chow morphism, i.e.,  it is the class of the rational curve in $\Delta_k$ corresponding to the curve lying above 
$2x_1+x_2+ \cdots +x_{k-1}$ in $\Sym^k(S)$, for any $k-1$ distinct points 
$x_1, \ldots, x_{k-1}$ of $S$.  The embedding of $H_2(S, \mathbb Z)$ in $H_2(\Hilb^k(S), \mathbb Z)$ is given by sending the class of a cycle $Y$ to the class of the cycle 
\[
 \Big \{\xi\in \Hilb^k(S) | \Supp(\xi)=\{p_1, \ldots, p_{k-1}, y\}, \;  y\in Y \Big\},
\]
where $p_1, \ldots, p_{k-1}$ are distinct fixed points of $S$ off $Y$. 

The decomposition \eqref {eq:cacca} induces 
\[
N_1(\Hilb^k(S), \mathbb Z) \cong {\rm Pic}(S)\oplus_{\perp} \mathbb Z[\mathfrak{r}_k].
\]
If $R \equiv D-y\mathfrak{r}_k$ in  $N_1(\Hilb^k(S), \mathbb Z)$, with $D\in \Pic(S)$,   then 
\[
 w_R = D - \frac{y}{2(k-1)}\mathfrak{e}_k,
\]
and by \eqref{eq:qalfa}, one has
\begin{equation}\label{eq:qalfa2}
q(R)= 
D^2-\frac{y^2}{2(k-1)}.
\end{equation}

We mentioned in the Introduction the importance of rational curves on hyperk\"ahler  manifolds.
The relation with the topic of this paper is that a curve $C$ on a $K3$ surface whose normalization $\widetilde{C}$ possesses a $g^1_k$ gives rise, in an obvious way, to an irreducible rational curve $R$ in $\Hilb^k(S)$. 
Indeed, the $g^1_k=|A|$ on $\widetilde{C}$ induces a $\PP^1_{(C,A)} \subset \Sym^k(\widetilde{C})$
and this is mapped to an irreducible rational curve $\overline{R}_{(C,A)} \subset \Sym^k(S)$
by the composed morphism
\[
\xymatrix{
\Sym^k(\widetilde{C}) \ar[r] & \Sym^k(C)  \ar@{^{(}->}[r] & \Sym^k(S).
}
\]
The irreducible rational curve $R=R_{(C,A)} \subset \Hilb^k(S)$ is the strict transform
$(\mu_k)^{-1}_*( \overline{R}_{(C,A)})$ by the Hilbert-Chow morphism.

Let  $C$ be an element of ${V}^k_{|H|,\delta}(S)$, and assume that its normalization carries a $g^1_k$ such that
\begin{eqnarray}
  \label{eq:condnod1} &\mbox{ all the nodes of  $C$ are non-neutral with respect to the $g^1_k$;} & \\
 \label{eq:condnod2} &\mbox{ the $g^1_k$ has only simple ramification}. &
\end{eqnarray}
Let $R$ be the corresponding rational curve in $\Hilb^k(S)$.

\begin{lemma} \label{classR} Under hypotheses \eqref{eq:condnod1} and \eqref{eq:condnod2},
  the class\footnote{There is an erroneous fraction of $1/2$ in the corresponding formula for $k=2$ in \cite[(6.7)]{fkp2}, due to a trivial computational mistake in the line above the formula: $\PP^1_{\Delta}.\mathfrak{e}=-2$ should have been $-1$, where $\PP^1_{\Delta}$ is $\mathfrak{r}_k$ in our notation.} of $R$ in $N_1(\Hilb^k(S), \mathbb Z)$ is
$H- (g+k-1)  \mathfrak{r}_k$. 
\end{lemma}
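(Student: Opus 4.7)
The plan is to use the orthogonal decomposition \eqref{eq:cacca} to write the class as
\[ R \equiv D - y\,\mathfrak{r}_k \quad \mbox{in } N_1(\Hilb^k(S),\ZZ), \]
with $D\in\Pic(S)$ and $y\in\ZZ$, and to pin down $D$ and $y$ by intersecting $R$ with judicious divisor classes on $\Hilb^k(S)$.

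To determine $D$, observe that for every $F\in\Pic(S)\subset H^2(\Hilb^k(S),\ZZ)$ one has $F\cdot\mathfrak{r}_k=0$, so that $F\cdot R = F\cdot D$. Geometrically, taking a general representative $F'\subset S$ of $F$, the divisor $F\subset\Hilb^k(S)$ is the locus of length-$k$ subschemes whose support meets $F'$; its intersection with $R$ counts (with multiplicity) the fibers of $|A|$ whose pushforward to $S$ has support meeting $F'$. A generic $F'$ meets $C$ in $H\cdot F$ reduced points, all off the nodes of $C$; each such point lifts uniquely to $\widetilde{C}$ and lies in a unique fiber of the $g^1_k$. Hence $F\cdot R = H\cdot F$ for every $F\in\Pic(S)$, which by non-degeneracy of the intersection pairing on $\Pic(S)$ forces $D=H$.

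To determine $y$, use $\mathfrak{e}_k\cdot H=0$ and $\mathfrak{e}_k\cdot\mathfrak{r}_k=-1$ to obtain $\Delta_k\cdot R = 2\,\mathfrak{e}_k\cdot R = 2y$. The intersection $\Delta_k\cdot R$ counts, with multiplicities, those fibers of $|A|$ whose pushforward to $S$ is nonreduced. Such a fiber arises in exactly two ways: (a) it contains a ramification point of $|A|$, or (b) it contains both branches of a node of $C$. Hypothesis \eqref{eq:condnod1} excludes case (b), while hypothesis \eqref{eq:condnod2} ensures that in case (a) the fiber contains a single ramification point of multiplicity two. Applying Riemann--Hurwitz to $|A|\colon\widetilde{C}\to\PP^1$, which has degree $k$, the total number of simple ramification points equals $2g-2+2k = 2(g+k-1)$.

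The main technical point is to check that each such ramification point contributes exactly $1$ to $R\cdot\Delta_k$. This is a local calculation: near a simple ramification point $q\in\widetilde{C}$ mapping to a smooth point of $C$, the two nearby sheets of $|A|$ separate as $\pm\sqrt{s}$ in a local parameter $s$ on $R\cong\PP^1$, and an explicit chart of the Hilbert--Chow blow-up $\Hilb^k(S)\to\Sym^k(S)$ centered at the corresponding point of the diagonal shows that $R$ meets $\Delta_k$ transversally there. Granting this transversality, $\Delta_k\cdot R = 2(g+k-1)$, hence $y = g+k-1$ and $R\equiv H-(g+k-1)\mathfrak{r}_k$, as claimed. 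The only nontrivial ingredient is the local multiplicity-one computation; everything else is bookkeeping with the orthogonal decomposition \eqref{eq:cacca} and Riemann--Hurwitz.
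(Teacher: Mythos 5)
Your proposal is correct and follows essentially the same route as the paper: write $R\equiv H-y\,\mathfrak{r}_k$ and compute $y=\mathfrak{e}_k\cdot R=\tfrac12\Delta_k\cdot R$ by counting nonreduced fibers, which hypotheses \eqref{eq:condnod1} and \eqref{eq:condnod2} reduce to the $2g+2k-2$ simple ramification points given by Riemann--Hurwitz. You additionally verify that the $\Pic(S)$-component is $H$ and flag the local multiplicity-one computation at ramification points, both of which the paper takes for granted (referring to \cite{hui1} for full details), so the two arguments agree in substance.
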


\begin{proof} 
Write $R= H - y \mathfrak{r}_k$, so that $y=\mathfrak{e}_k  \cdot  R$.
Since all nodes are non-neutral and the $g^1_k$ has simple ramification everywhere, by Riemann-Hurwitz
we have
\[y= \mathfrak{e}_k  \cdot  R = \frac{1}{2}\Delta_k  \cdot  R= 
\frac{1}{2}(2g+2k-2)=g+k-1. \]
\end{proof}

The particular case $p=9$, $\delta=2$ and $k=4$ is treated in \cite[Ex. 4.5]{HTint}.

\begin{remark} \label{rem:classR} The conclusion of Lemma \ref{classR} holds even without hypothesis \eqref{eq:condnod2}. Furthermore, also hypothesis \eqref{eq:condnod1} can be weakened: if $\delta'$ is the number of non-neutral nodes of $C$, then the class of $R$ is $H- (g'+k-1)  \mathfrak{r}_k$, where $g':=p-\delta' \geq g$. For a detailed proof, we refer e.g. to 
\cite[\S\;3.3 and 5.3]{hui1} (see also \cite[\S\;6.2]{hui2}). 
\end{remark}


\section{Necessary conditions for existence of linear series on normalizations} \label{S:nonex}

Consider the usual  \emph{Brill-Noether number}
$\rho(g,r,d)=g-(r+1)(r+g-d)$.

\begin{thm} \label{thm:noexist}
  Let $(S,H)\in \K_p$ such that all elements in $|H|$ are reduced and irreducible (e.g., $\Pic (S) \cong \ZZ[H]$). Assume that $C \in |H|$ is a curve whose normalization possesses a $g^r_d$. Let $g$ be the geometric genus of $C$ and set $\delta=p-g$ and   $\alpha:=\Big\lfloor \frac{gr+(d-r)(r-1)}{2r(d-r)}\Big\rfloor$. Then
  \begin{equation}
    \label{eq:bound}
  \rho(p,\alpha r,\alpha d+\delta)  \geq 0,\; \text{i.e.,}\; \; \delta\ge \alpha \Big(rg-(d-r)(\alpha r+1)\Big).
      \end{equation}
\end{thm}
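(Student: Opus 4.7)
My plan is to adapt the Lazarsfeld--Mukai vector bundle technique \cite{L} to the normalization setting and apply it not to the given $g^r_d$ directly but to its $\alpha$-th ``power'' linear series, where $\alpha$ is the specific integer from the statement (chosen so as to maximize the resulting constraint).

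\textbf{Step 1 (Power linear series).} Let $f:\widetilde C\to C$ be the normalization, and let $(A,V)$ be the $g^r_d$ on $\widetilde C$, with $V\subseteq H^0(\widetilde C,A)$ of dimension $r+1$. By monotonicity of $\rho(p,\alpha r,\alpha d+\delta)$ in $d$, we may replace $A$ by $A(-B)$, where $B$ is the base locus of $|V|$, reducing to the case $|V|$ base-point free (this only decreases $d$, making the bound to prove easier). For the integer $\alpha$ from the statement, let $V_\alpha$ denote the image of the multiplication map $\Sym^\alpha V\to H^0(\widetilde C, A^{\otimes\alpha})$. A classical linear general position argument, applied to the non-degenerate image $\phi_V(\widetilde C)\subset \PP^r$, yields $\dim V_\alpha\ge \alpha r+1$, so that $|V_\alpha|$ is a base-point-free $g^{\alpha r}_{\alpha d}$ on $\widetilde C$.

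\textbf{Step 2 (Lazarsfeld--Mukai bundle on $S$).} Set $\mathcal A:=f_*(A^{\otimes\alpha})$, a rank-one torsion-free sheaf on $C$ of degree $\alpha d+\delta$, globally generated by $V_\alpha\subset H^0(C,\mathcal A)$. Applying the Lazarsfeld--Mukai construction to the surjection
\[
V_\alpha\otimes \O_S\twoheadrightarrow \iota_*\mathcal A
\]
(where $\iota:C\hookrightarrow S$), suitably adapted to the nodal setting as in \cite{fkp}, produces a locally free kernel $F$ of rank $\alpha r+1$ on $S$ with $c_1(F)=-H$ and $c_2(F)=\alpha d+\delta$. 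Its dual $E:=F^\vee$ is a rank-$(\alpha r+1)$ vector bundle with $c_1(E)=H$, $c_2(E)=\alpha d+\delta$, and $h^0(S,E)\ge \dim V_\alpha\ge \alpha r+1$.

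\textbf{Step 3 (Simplicity and Brill--Noether bound).} The assumption that every curve in $|H|$ is reduced and irreducible is used to show that $E$ is simple: any destabilizing saturated rank-one subsheaf $\O_S(M)\hookrightarrow E$ would, via the induced effective decomposition $H\sim M+(H-M)$ in $\Pic(S)$, force a reducible curve in $|H|$, contradicting the hypothesis. A straightforward calculation with the Mukai pairing shows that
\[
\chi(E,E)=2-2\rho(p,\alpha r,\alpha d+\delta),
\]
while simplicity of $E$ on the K3 surface $S$, combined with Serre duality, gives $\Ext^0(E,E)=\Ext^2(E,E)=\CC$, hence $\chi(E,E)\le 2$. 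Comparing these two expressions yields the desired inequality $\rho(p,\alpha r,\alpha d+\delta)\ge 0$. The choice of $\alpha$ in the statement is, up to rounding, the integer maximizer of the right-hand side of the equivalent inequality $\delta\ge \alpha\bigl[rg-(d-r)(\alpha r+1)\bigr]$, giving the sharpest necessary condition obtainable by this argument.

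\textbf{Main obstacle.} The most delicate step is (3): establishing simplicity of $E$ under the weak hypothesis that $|H|$ contains no reducible curves, rather than the stronger $\Pic(S)\cong \ZZ[H]$. This requires a careful analysis to rule out all possible destabilizing subsheaves and to interpret their first Chern classes as effective components of curves in $|H|$. A second technical point is verifying that the nodal version of the Lazarsfeld--Mukai construction in Step 2 indeed produces a locally free sheaf with the claimed Chern classes, despite $\mathcal A$ not being a line bundle on the singular curve $C$.
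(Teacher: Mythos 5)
Your proposal follows essentially the same route as the paper: the paper also pushes forward the powers $lA$ to $C$, forms the Lazarsfeld--Mukai kernel of the evaluation map on $S$, computes $\chi(\E_l\otimes\E_l^*)=2(1-\rho)$, and derives a reducible member of $|H|$ from non-simplicity, finally choosing $l=\alpha$ as the integer minimizing the quadratic $\rho(p,lr,ld+\delta)$ in $l$. The only substantive point you gloss over is that $\nu_*(A^{\otimes\alpha})$ need not be globally generated by $V_\alpha$ at the singular points of $C$, which the paper handles by passing to the globally generated subsheaf $\A'_l$ (this only lowers $c_2$ and $\rho$, so the argument goes through).
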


\begin{proof}
  Let $\nu: \widetilde{C} \to C$ be the normalization of $C$ and let $A$ be a line bundle on $\widetilde{C} $ such that $\vert A\vert = g^ r_d$. Then, for any positive integer $l$, the sheaf $\A_l:=\nu_*(lA)$ is torsion free of rank one on $C$ with $h^0(\A_l)=h^0(lA)\geq lr+1$ and $\deg \A_l=\deg (lA)+\delta=ld+\delta$ (see, e.g., \cite[Prop. 3.2]{fkp}). We have 
$\rho(\A_l) =\rho(p_a(C),h^0(\A_l)-1, \deg \A_l) \leq \rho(p,lr,ld+\delta)$ and we claim that
\begin{equation}
  \label{eq:rho}
  \rho(p,lr,ld+\delta)=l^ 2 r(d-r)-l(gr+r-d)+\delta  \geq 0.
\end{equation}

To prove \eqref{eq:rho}, let $\A_l'$ denote the globally generated part of $\A_l$, that is, the image of the evaluation map $H^0(\A_l) \*\O_C \to \A_l$. If \eqref{eq:rho} does not hold, then
$\rho(\A'_l) \leq  \rho(\A_l) <0$. In particular, $h^1(\A'_l) >0$.
The kernel of the evaluation map
 \[ 
\xymatrix{  H^0(\A_l) \* \O_S \ar[r] & \A_l' \ar[r] & 0}
\]
is a vector bundle, whose dual bundle $\E_l$ has rank $h^0(\A_l')=h^0(\A_l) \geq lr+1$ and satifies $c_1(\E_l)=C$ and 
$c_2(\E_l) = \deg \A_l' \leq \deg \A_l=ld+\delta$ (see  \cite{G}). One has $\chi(\E_l \otimes \E_l^*)=2(1- \rho(\A'_l)) \geq 4$ (see e.g. \cite[\S\;1]{L}). Furthermore, dualizing the sequence defining $\E_l$, we obtain
\[
\xymatrix{
          & 0    \ar[r] & H^0(\A'_l)^* \* \O_S \ar[r] & \E_l \ar[r]^{\hspace{-0.8cm} q} & \mathfrak{ext}^1_{\O_S} (\A'_l,\O_S) \ar[r] & 0.
}
\]
As  $h^0(\mathfrak{ext}^1_{\O_S} (\A'_l,\O_S))=h^1(\A'_l) >0$ (by \cite[Lemma 2.3]{G}), this proves that
$\E_l$ is globally generated off a finite set. 
Thus, as in the proof of \cite[Lemma 1.3]{L}, the linear system $|C|$ would contain a reducible curve,
a contradiction. This proves \eqref{eq:rho}.

The polynomial in $l$ in  \eqref{eq:rho} attains its minimum for $l_0=\frac{gr+r-d}{2r(d-r)}$.  The inequality
\eqref{eq:rho}  holds for the closest integer to $l_0$, which is $\alpha$. This proves \eqref{eq:bound}. 
\end{proof}

The property of $|H|$ not containing reducible curves is a dense, open property in $\K_p$. Therefore, the theorem proves the ``only if'' part in Theorem \ref{thm:main}(i).

\begin{remark} \label{rem:noexist2} 
(a) The proof does not require $\widetilde{C}$ to be smooth, only to possess a $g^r_d$. Thus the proof works for any {\it partial normalization} $\widetilde{C}$ of $C$ possessing a $g^r_d$, with $g:=p_a(\widetilde{C})$ instead of the geometric genus of $C$.

(b)
 Fix $p$,  $d$ and $r$. If  \eqref{eq:bound} holds for a given $\delta$, then it  holds for all $\delta' \geq \delta$. Indeed, for all positive integers $l$
and for any $\delta' \geq \delta$, we have 
$\rho(p,lr,ld+\delta') \geq \rho(p,lr,ld+\delta)$.
\end{remark}

Next we concentrate on the case $r=1$ and set $d=k$, where \eqref{eq:bound} 
reads like \eqref{eq:boundintro}. Then Theorem \ref{thm:noexist} proves the part of Theorem \ref{thm:main} stating that $V^k_{|H|,\delta}(S) \neq \emptyset$ only if \eqref{eq:boundintro} holds.

\begin{remark} \label{rem:noexist} Set $\rho=\rho(p,\alpha,k\alpha+\delta)$. 
It is convenient to write \eqref{eq:boundintro} in 
the form
 \begin{equation}\label{eq:del}
\rho \ge 0,\;\; \text{i.e.,}\;\; \delta \geq \frac{(g-k+1)^2-\beta^2}{4(k-1)},
 \end{equation}
where $\beta:=(k-1)(2\alpha+1) -g$, i.e., $-(k-1) < \beta  \leq k-1$.  
\end{remark}

As an aside, we obtain a bound on the  \emph{Beauville-Bogomolov self-intersection} of the rational curves in $\Hilb^k(S)$ corresponding  to curves in $V^k_{|H|,\delta}(S)$,  which confirms for them a conjecture by Hassett and Tschinkel \cite[Conj.~1.2]{HTint}. 

\begin{cor} \label{cor:bound} Let $(S,H)\in \K_p$  such that all elements in $|H|$ are reduced and irreducible (e.g., $\Pic (S) \cong \ZZ[H]$). Assume that $C \in V^k_{|H|,\delta}(S)$ is a curve whose normalization possesses a $g^1_k=|A|$ satisfying \eqref{eq:condnod1} and \eqref{eq:condnod2}. Let $R=R_{(C,A)}$ and $g:=p-\delta$.
Then
\begin{equation} \label{eq:boundqr}
 q(R)= 2(p-1) -\frac {{(g+k-1)}^ 2} {2(k-1)} = 2(\rho-1)- \frac{\beta^2}{2(k-1)} \geq -\frac{k+3}{2},
\end{equation}
with $\rho$ and $\beta$ as in Remark \ref {rem:noexist}.  
\end{cor}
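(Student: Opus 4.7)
The strategy is to compute $q(R)$ directly from Lemma \ref{classR}, then rewrite the result in terms of $\rho$ and $\beta$ by a short algebraic manipulation, and finally bound it using Theorem \ref{thm:noexist} together with the trivial bound $\beta^2 \leq (k-1)^2$.

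First, by Lemma \ref{classR}, the class of $R$ in $N_1(\Hilb^k(S),\ZZ)$ is $H - (g+k-1)\mathfrak{r}_k$. Applying formula \eqref{eq:qalfa2} with $D=H$ (so that $D^2=H^2=2(p-1)$) and $y=g+k-1$ yields immediately
\[
q(R) \;=\; 2(p-1) - \frac{(g+k-1)^2}{2(k-1)},
\]
which is the first claimed equality.

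Second, I would substitute $g=(k-1)(2\alpha+1)-\beta$, so $g+k-1=2(k-1)(\alpha+1)-\beta$. Expanding gives
\[
\frac{(g+k-1)^2}{2(k-1)} \;=\; 2(k-1)(\alpha+1)^2 - 2(\alpha+1)\beta + \frac{\beta^2}{2(k-1)}.
\]
On the other hand, unwinding the definitions shows
\[
\rho \;=\; \rho(p,\alpha,k\alpha+\delta) \;=\; p-(\alpha+1)\bigl(g-(k-1)\alpha\bigr) \;=\; p+(\alpha+1)\beta-(k-1)(\alpha+1)^2.
\]
Substituting this into the previous identity gives the second claimed equality
\[
q(R) \;=\; 2(\rho-1) - \frac{\beta^2}{2(k-1)}.
\]

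Finally, since $(S,H)\in \K_p$ has no reducible curves in $|H|$, Theorem \ref{thm:noexist} applies and gives $\rho\geq 0$. From Remark \ref{rem:noexist} we have $-(k-1)<\beta\leq k-1$, hence $\beta^2\leq (k-1)^2$ and $\tfrac{\beta^2}{2(k-1)}\leq \tfrac{k-1}{2}$. Combining,
\[
q(R) \;\geq\; -2 - \frac{k-1}{2} \;=\; -\frac{k+3}{2},
\]
which is the desired bound. There is no real obstacle here; the only step requiring care is the algebraic rewriting in terms of $\rho$ and $\beta$, which is completely routine.
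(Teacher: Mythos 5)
Your proposal is correct and follows exactly the paper's (very terse) argument: the first equality from \eqref{eq:qalfa2} and Lemma \ref{classR}, the middle equality by the direct computation you spell out, and the inequality from Theorem \ref{thm:noexist} ($\rho\geq 0$) together with the bound $|\beta|\leq k-1$ from Remark \ref{rem:noexist}. Your algebraic verification of the middle equality checks out, so there is nothing to add.
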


\begin{proof}
The first equality on the left follows from \eqref{eq:qalfa2} and Lemma \ref{classR}, the middle equality is a direct computation and the inequality follows from Theorem \ref{thm:noexist} and Remark \ref{rem:noexist}. 
\end{proof}

\begin{remark} \label{rem:canskip}
  By Remarks \ref{rem:classR} and \ref{rem:noexist2}(a), the corollary also holds without assumptions \eqref{eq:condnod1} and \eqref{eq:condnod2} if we substitute the number $\delta$ with the number $\delta'$ of non-neutral nodes of $C$ with respect to the $g^1_k$. In particular, the inequality $q(R) \geq -\frac{k+3}{2}$ holds for any rational curve $R$ obtained from a curve in $V^k_{|H|,\delta}(S)$.
\end{remark}

\section{Chains of rational curves on unions of scrolls that are  limits of $K3$ surfaces} \label{ssec:deg}

We will henceforth fix an integer $p \geq 3$ and set $q=2p-1$.

\subsection{Unions of scrolls as limits of special $K3$ surfaces} \label{ssec:unionscr}
If $(S,L)\in \K_q$, then $\vert L\vert$ determines a morphism $\phi_{\vert L\vert}: S\to \PP^q$, which is an embedding for general
$(S,L)$. 
Let $\mathfrak{H}_q$ be the component of the Hilbert scheme of surfaces in
$\PP^q$ whose general point corresponds to an embedding of an $S$ as above. 
One has $\dim(\mathfrak{H}_q)=q^ 2+2q+19$ and $\mathfrak{H}_q$ is smooth at each point corresponding
to a smooth $K3$ surface.
The component  $\mathfrak{H}_q$ contains  points that correspond
to  \emph{degenerations} of elements of $\K_q$, as we will now explain (see \cite [\S 2.2] {clm}).  

Let $E \subset \PP^{q}$ be a smooth elliptic normal curve
of degree $q+1$ with two distinct line bundles $L_i \in \Pic^2(E)$, with $i=1,2$.
Let $R_1$ and $R_2$ be the rational normal scrolls of degree $q-1$ in
$\PP^{q}$ \emph{defined by} $L_1$ and $L_2$, respectively, i.e. $R_i$ is the union of lines
spanned by the divisors of  $\vert L_i\vert$. Then $R_1 \cap R_2 =E$,  the intersection is transversal and $E$ is anticanonical on each $R_i$. Moreover $R=R_1\cup R_2$ corresponds to  a smooth point of $\mathfrak{H}_q$. 

We will be concerned with the following case. Let $L_i \in \Pic^2(E)$, with $1\le i\le 2$, be two general line bundles. Consider the embedding of $E$ given by $L_2^ {\otimes p}=:\O_E(1)$. Then $R_1 \cong \PP^1 \x \PP^1$ and $R_2 \cong \mathbb{F}_2$. 
We let $\mathfrak{s}_i$ and $\mathfrak{f}_i$ denote the classes of the nonpositive section and fiber, respectively, of $R_i$, for $1\le i\le 2$. 
Then $\O_{R_1}(1) \cong \O_{R_1}(\mathfrak{s}_1 +(p-1)\mathfrak{f}_1)$ and 
$\O_{R_2}(1) \cong \O_{R_2}(\mathfrak{s}_2 +p\mathfrak{f}_2)$. The section $\mathfrak{s}_2$ does not intersect $E$, hence lies in the smooth locus of $R$, and it is embedded in $\PP^ q$ as a (degenerate) rational normal curve of degree $p-2$. In particular, $\mathfrak{s}_2$ is a Cartier divisor on $R$, so that 
\begin{equation}
  \label{eq:classnuova}
  H_0:=\O_R(1)\otimes \O_R(-\mathfrak{s}_2)
\end{equation}
is also Cartier, with $H_0^2=2p-2$. Moreover $\mathfrak{s}_2 \cdot H_0=p$. 

\begin{lemma} \label{lemma:mantengo}
There is a unique irreducible, codimension one subvariety 
$\mathfrak{H}'_q$ of $\mathfrak{H}_q$ containing the point corresponding to $R$ and smooth there, such that the general point of $\mathfrak{H}'_q$ represents a smooth $K3$ surface $S$ containing a smooth rational curve $\Gamma$ of degree $p-2$ degenerating to $\mathfrak{s}_2$ when $S$ flatly degenerates to $R$. The line bundle $H:=\O_{S}(1)\otimes \O_S(-\Gamma)$ is globally generated and primitive with ${H}^2=2p-2$. 
In particular, $(S,H)$ is a general element of  $\K'_p$ (cf. \S\;\ref{sub:k3}). 
\end{lemma}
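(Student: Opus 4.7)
The plan is to realize $\mathfrak{H}'_q$ as (the closure of the image of) the unique component through $[(R,\mathfrak{s}_2)]$ of the relative Hilbert scheme $\pi\colon \mathfrak{P}\to \mathfrak{H}_q$ parametrizing closed subschemes of surfaces in $\mathfrak{H}_q$ with the Hilbert polynomial of a rational normal curve of degree $p-2$. The first task is to compute the normal bundle $N_{\mathfrak{s}_2/R}$. Since $E\in|-K_{R_2}|=|2\mathfrak{s}_2+4\mathfrak{f}_2|$, one has $E\cdot \mathfrak{s}_2=0$, so $\mathfrak{s}_2$ lies in the smooth locus of $R$ and $N_{\mathfrak{s}_2/R}=N_{\mathfrak{s}_2/R_2}=\O_{\PP^1}(-2)$. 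In particular $h^0=0$, so $\mathfrak{s}_2$ is infinitesimally rigid in $R$ and $\pi$ is unramified at $[(R,\mathfrak{s}_2)]$; and $h^1=1$, so the standard deformation theory of embedded subschemes identifies the tangent space to $\mathfrak{P}$ at $[(R,\mathfrak{s}_2)]$ with the kernel of the obstruction map $o\colon H^0(N_{R/\PP^q})\to H^1(N_{\mathfrak{s}_2/R})\cong \CC$ to lifting $\mathfrak{s}_2$ along a first-order deformation of $R$. Hence the tangent dimension is $\dim \mathfrak{H}_q$ if $o=0$ and $\dim \mathfrak{H}_q-1$ otherwise.

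Next I would produce an explicit lower bound. By Proposition \ref{prop:ssub}, $\K'_p$ is irreducible of dimension $18$, and the general $(S,H)\in \K'_p$ has $\Pic(S)=\ZZ[H]\oplus \ZZ[\Gamma]$ with $H^2=2p-2$, $H\cdot \Gamma=p$, $\Gamma^2=-2$. Setting $L:=H+\Gamma$ one computes $L^2=4p-4=2q-2$ and $L\cdot \Gamma=p-2$, and a standard K3 criterion shows that $L$ is very ample. The embedding $S\hookrightarrow \PP^q$ given by $|L|$ yields a point of the component $\mathfrak{H}_q$ (same Hilbert polynomial as $R$) in which $\Gamma$ embeds as a smooth rational normal curve of degree $p-2$. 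Varying $(S,H)$ in $\K'_p$ and the projective frame over $\mathrm{PGL}(q+1)$ sweeps out a locally closed subset of $\mathfrak{P}$ of dimension $18+(q^2+2q)=\dim \mathfrak{H}_q-1$.

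Combining the two bounds with the fact that the general K3 surface in $\mathfrak{H}_q$ has Picard rank one and thus contains no rational curve of degree $p-2$ (so $\pi$ is not dominant and consequently $o\ne 0$), one concludes that $\mathfrak{P}$ is smooth of dimension $\dim \mathfrak{H}_q-1$ at $[(R,\mathfrak{s}_2)]$, with a unique component $\mathfrak{P}^{\circ}$ passing through this point, and that $\mathfrak{H}'_q:=\overline{\pi(\mathfrak{P}^{\circ})}$ is irreducible, smooth of codimension one at $[R]$. Uniqueness of $\mathfrak{H}'_q$ among such subvarieties follows because its tangent space at $[R]$ is forced to be the single hyperplane $\ker o$. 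The required properties of $H:=\O_S(1)\otimes \O_S(-\Gamma)$ on a general $S\in \mathfrak{H}'_q$ are then immediate: $H^2=L^2-2L\cdot \Gamma+\Gamma^2=2p-2$; primitivity comes from $\Pic(S)=\ZZ[H]\oplus \ZZ[\Gamma]$; global generation follows from standard results for line bundles on K3 surfaces; and $(S,H)$ is a general element of $\K'_p$ by construction.

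The main obstacle is the nonvanishing of the obstruction map $o$. Heuristically this is clear from Noether--Lefschetz considerations (the generic K3 in $\mathfrak{H}_q$ carries no smooth rational curve), but making it rigorous is most naturally done, as above, by comparing the infinitesimal upper bound $\dim \mathfrak{H}_q-1$ with the explicit lower bound coming from the nonempty codimension-one locus $\K'_p\subset \K_p$; the matching of the two bounds forces $o\ne 0$ and simultaneously yields the smoothness and irreducibility claims.
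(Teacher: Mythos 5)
Your framework (the relative Hilbert scheme $\mathfrak{P}\to\mathfrak{H}_q$ of pairs, the computation $N_{\mathfrak{s}_2/R}\cong\O_{\PP^1}(-2)$, the identification of the tangent space with $\ker o$) is essentially the paper's, which works with the deformation space of $\mathfrak{s}_2$ inside the universal family. But your key step --- the lower bound $\dim_{[(R,\mathfrak{s}_2)]}\mathfrak{P}\ge\dim\mathfrak{H}_q-1$ --- has a genuine gap. The $(\dim\mathfrak{H}_q-1)$-dimensional locus you sweep out from general $(S,H)\in\K'_p$ embedded by $|H+\Gamma|$ is a subset of $\mathfrak{P}$, but you never show that $[(R,\mathfrak{s}_2)]$ lies in its closure; without that, it says nothing about the local dimension of $\mathfrak{P}$ \emph{at that point}, and another component of $\mathfrak{P}$ could pass through $[(R,\mathfrak{s}_2)]$ with smaller dimension. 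Worse, proving that $(R,\mathfrak{s}_2)$ is a limit of such pairs $(S,\Gamma)$ is essentially the content of the lemma, so the argument is circular. The paper avoids this by using the purely local, deformation-theoretic lower bound $\dim_{[(R,\mathfrak{s}_2)]}\mathfrak{P}\ge\dim\mathfrak{H}_q+\chi(N_{\mathfrak{s}_2/R})=\dim\mathfrak{H}_q+h^0-h^1=\dim\mathfrak{H}_q-1$ (in the form of Koll\'ar's bound $-K_{\X}\cdot\mathfrak{s}_2+\dim\X-3$ applied to the total space of the universal family). You already computed $h^0=0$ and $h^1=1$; you only used them to describe the tangent space, not to get the expected-dimension bound, which is the step that actually makes the argument non-circular. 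A second logical flaw: ``$\pi$ is not dominant and consequently $o\ne 0$'' is not a valid inference --- $o=0$ only says every first-order deformation of $R$ lifts to first order, which is compatible with higher-order obstructions and hence with non-dominance.

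Two further points are missing relative to what the statement requires. First, you assert that the general $S\in\mathfrak{H}'_q$ is smooth; this needs proof, since the locus of one-nodal $K3$'s in $\mathfrak{H}_q$ is \emph{also} of codimension one, so a dimension count alone cannot exclude it --- the paper rules it out by a separate lattice/Noether--Lefschetz argument ($\Pic(S')\cong\ZZ[L']\oplus\ZZ[N]$ for the very general nodal surface forces it to contain no smooth rational curve of degree $p-2$ off the node). Second, your justification of primitivity of $H$ via ``$\Pic(S)=\ZZ[H]\oplus\ZZ[\Gamma]$'' presupposes that the general point of $\mathfrak{H}'_q$ is a general point of $\K'_p$, which is again part of the conclusion; the paper instead proves primitivity directly (if $H=hA$ then $h\mid p-1$ from $H^2$ and $h\mid p$ from $H\cdot\Gamma=p$, a contradiction) and only then invokes Proposition \ref{prop:ssub} to conclude generality in $\K'_p$.
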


\begin{proof}
  Let $\X \to \mathfrak{H}_q$ be the universal family and consider $\mathfrak{s}_2 \subset R_2 \subset R \subset \X$. Then $\mathfrak{s}_2$ stays off the singular locus of $\X$ and by standard deformation theoretic arguments (cf. e.g. \cite[II,Thm.~1.14]{Ko}), it moves inside $\X$  in a family $\F$ with 
\begin{equation}\label{eq:def}
 \dim(\F)\ge -K_{\X} \cdot \mathfrak{s}_2+\dim (\X)-3 = -K_{R_2} \cdot \mathfrak{s}_2 + \dim (\mathfrak{H}_q)-1 =  \dim (\mathfrak{H}_q)-1.
 \end{equation}
Since $\mathfrak{s}_2$ does not move on $R$ and since, for $S\in \mathfrak{H}_q$ outside a countable union of Noether-Lefschetz divisors, we have $\Pic(S)\cong \ZZ[L]$, with $L=\O_S(1)$, then equality must hold in \eqref {eq:def}. This implies that  $\F$  is smooth at the point corresponding to $\mathfrak{s}_2$. Hence  there is a  unique irreducible codimension one 
subvariety $\mathfrak{H}'_q$ in $\mathfrak{H}_q$, containing the point corresponding to $R$ and smooth there,  over which $\mathfrak{s}_2$ deforms.

Let $S$ be the surface corresponding to the general point of $\mathfrak{H}'_q$, let $L=\O_S(1)$  and let $\Gamma$ be the rational curve of degree $p-2$ that is a deformation of 
$\mathfrak{s}_2$.  Since the locus of pairs of scrolls inside $\mathfrak{H}_p$ has codimension $16$, the surface $S$ is irreducible with at most isolated double points of type $A_n$ for some $n\ge 1$, and $\Gamma$ sits in the smooth locus of $S$. Suppose $S$ is singular.
Then its minimal desingularization $\pi: S'\to S$ is a $K3$ surface and we set $L'=\pi^ *(L)$. By standard Hodge theory, the subvariety of $\mathfrak{H}_q$ corresponding to such singular surfaces $S$ is irreducible of codimension $1$, and all elements outside a countable union of Zariski closed proper subsets have one single double point of type $A_1$ (a \emph{node}) and $\Pic(S') \cong \ZZ[L']\oplus \ZZ[N]$, where $N$ is the $(-2)$--curve corresponding to the node. Hence such a singular $S$ cannot contain a smooth rational curve $\Gamma$ not containing the node. This proves that $S$ is smooth. 

To finish the proof, note that for $S$ general in  $\mathfrak{H}'_q$, the linear system  $|H|$ is base point free, as $|H_0|$ is,  and ${H}^2=2p-2$. Suppose $H=hA$, with $h>1$. Then $p-1=h^ 2(\gamma-1)$, where $A^ 2=2\gamma-2$. Moreover $p=H\cdot \Gamma=h(A\cdot \Gamma)$. Hence $h$ divides both $p-1$ and $p$, a contradiction. Thus  $H$ is indivisible, whence $(S,H)$ is general in $\K'_p$ by Proposition \ref {prop:ssub}.
\end{proof}

Let $\mathbb D$ be a disc. We fix $\varphi: \mathbb D \to \mathfrak{H}'_q$ a holomorphic map  with nonzero differential, such that $\varphi(0)$
is the point corresponding to $R$ and, for $t\in \mathbb D$ general, $\varphi(t)$ is a general element in $\mathfrak{H}'_q$.  By pulling back the universal family on $\mathfrak{H}_q$, we obtain a flat family $\X\to \mathbb D$, whose total space has isolated singularities along $E$ in the central fibre, and is otherwise smooth. Indeed, the singular locus is the zero locus of  the section in $H^0(E,T_{R}^1)$ corresponding to the section in $H^ 0(R,N_{R\vert \PP^ q})$ that is the image of the differential
of $\varphi$ at $0$ (cf. \cite[p.~647]{clm} or \cite[Sec. 3.1]{C}). Note that $T_{R}^1$ is a line bundle of degree $16$ on $E$ (cf. \cite[p.~644]{clm}).  

On  $\X$ we have  the pullback $\L$  of the hyperplane bundle on $\PP^q$.  The restriction to $\X$ of the  total space of the flat family $\F$ of deformations of $\mathfrak{s}_2$  is a surface $G$ contained in the smooth locus of $\X$. Hence it determines a line bundle $\G$ on $\X$. We set $\H=\L\otimes \G^ *$.
The restrictions of $\L$ and $\H$ to the general fibre $S$ of $\X\to \mathbb D$ give the
 two globally generated line bundles $L=\O_S(1)$ and $H$ 
 specializing to $\O_R(1)$ and $H_0$, respectively, on the central fiber $R$. The restriction of $G$ to the general fibre $S$ is the smooth rational curve $\Gamma$, specializing to $\mathfrak{s}_2$ on the central fibre, and  $L\cong \O_S(H+\Gamma)$. 
 
One can perform a small resolution of the singularities of $\X$, obtaining a new family $f: \mathcal S\to \mathbb D$, which has all properties indicated in \S \ref {ssec:USV}. The central fibre $S_0$ however is no longer $R$, but a modification of it. Precisely one can work things out in such a way that $S_0=R_1\cup \widetilde R_2$, where $\widetilde R_2$ is a sequence of blow ups of  
$R_2$ at the singular locus of $\X$, and $R_1$ and $\widetilde R_2$ meet transversally along $E\subset R_1$ and its strict transform (still denoted by $E$) on $\widetilde R_2$. 

Since the curves on $R$ we will be concerned with lie off the singular points of $\X$, we can and will work on $\X$ or $\mathcal{S}$ with no distinction.

\subsection{Special chains of rational curves}\label{sec:chains}

We now introduce the building blocks of 
limits on $R$ of  nodal hyperplane sections on the general surface in $\mathfrak{H}'_q$ in the degeneration described in \S\;\ref{ssec:unionscr}. 

Let $m$ be a positive integer satisfying $m \leq p$.
A {\it chain of length $2m-1$} is a
sum of $2m-1$ distinct lines
\[ f_{2,1}+f_{1,1}+f_{2,2}+f_{1,2} + \cdots + f_{2,m-1}+f_{1,m-1}+f_{2,m}, \; f_{i,j} \in |\mathfrak{f}_i|, \]
where 
$f_{1,j}$ intersects only $f_{2,j}$ and $f_{2,j+1}$, for $j=1, \ldots, m-1$.
The chain intersects $E$ in $2m$ points, consisting of $m$ divisors of  $|L_2|$, and 
$2m-2$ of them lie on the intersections between two lines, whereas the remaining two are on $f_{2,1}$ and $f_{2,m}$ and will be denoted by $a_1$ and $b_1$, respectively. This pair of points will be called {\it the distinguished pair of points of the chain}. 

We can also define chains with the roles of $R_1$ and $R_2$ interchanged, but we will not need this, except for the inductive argument in the proof of Lemma \ref{lemma:oddchain} right below. 

We will denote by $\C_{m}$ the family of  chains of length $2m-1$. Note that $\C_{m}$ is a locally closed subvariety of a Hilbert scheme of curves on $R$.

\begin{lemma} \label{lemma:oddchain}
  The map sending a  chain of length $2m-1$ 
to its pair of distinguished points on $E$ is a birational, injective morphism 
between $\C_{m}$ and 
$|mL_2-(m-1)L_{1}|$.
\end{lemma}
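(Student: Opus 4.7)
The plan is to construct an explicit inverse to the proposed map using the two hyperelliptic involutions on $E$ coming from $|L_1|$ and $|L_2|$, and to verify compatibility by a short computation in $\Pic(E)$.

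First I would verify well-definedness. Writing the chain's intersections with $E$ as $f_{2,1}\cap E=a_1+p_1$, $f_{1,j}\cap E=p_j+q_j$, $f_{2,j+1}\cap E=q_j+p_{j+1}$ for $1\le j\le m-1$, and $f_{2,m}\cap E=q_{m-1}+b_1$, each $R_i$-fiber restricts to a divisor in $|L_i|$ on $E$ since $L_i=\O_{R_i}(\mathfrak{f}_i)|_E$. Summing the contributions from the $m$ fibers of $R_2$ and the $m-1$ fibers of $R_1$ gives, in $\Pic(E)$,
\[ mL_2\equiv a_1+b_1+\sum_{j=1}^{m-1}(p_j+q_j),\qquad (m-1)L_1\equiv \sum_{j=1}^{m-1}(p_j+q_j); \]
subtracting yields $a_1+b_1\in|mL_2-(m-1)L_1|$, a degree-$2$ complete linear system on $E$, hence a $\PP^1$. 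Since the distinguished pair $\{a_1,b_1\}$ is cut out algebraically from the family of chains, this gives a morphism $\mu:\C_m\to|mL_2-(m-1)L_1|$.

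Next I would build the inverse. Let $\sigma,\tau:E\to E$ be the involutions determined by $x+\sigma(x)\in|L_2|$ and $x+\tau(x)\in|L_1|$. Given a general $D=a_1+b_1\in|mL_2-(m-1)L_1|$ with $a_1\ne b_1$, I would set $p_1:=\sigma(a_1)$ and inductively $q_j:=\tau(p_j)$, $p_{j+1}:=\sigma(q_j)$ for $1\le j\le m-1$, and take $f_{2,j}$ (resp. $f_{1,j}$) to be the unique fiber of $R_2$ (resp. $R_1$) through the prescribed point of $E$. Since $\sigma\tau$ acts on the elliptic curve $E$ as translation by $L_2-L_1$, induction gives $q_{m-1}=a_1+(m-1)(L_1-L_2)$, and hence
\[ \sigma(q_{m-1})=L_2-q_{m-1}=mL_2-(m-1)L_1-a_1=b_1, \]
the last equality using $D\in|mL_2-(m-1)L_1|$. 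This closure condition says that the last $R_2$-fiber $f_{2,m}$ ends exactly at $b_1$, so the union of the $2m-1$ lines is a genuine chain with distinguished pair $\{a_1,b_1\}$; running the construction starting from $b_1$ instead yields the same chain traversed in reverse order.

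Uniqueness of this reconstruction shows that $\mu$ is injective. Dominance is immediate since the inverse construction succeeds at every $D$ for which $a_1,p_1,q_1,\ldots,q_{m-1},b_1$ are pairwise distinct, an open dense subset of $\PP^1$. An injective dominant morphism from an irreducible variety to a smooth rational curve over $\CC$ is birational, so $\mu$ is both injective and birational. The main obstacle will be the group-theoretic identity $\sigma(q_{m-1})=b_1$; once one notes that $\sigma\tau$ is a translation on $E$ (being the composition of two involutions on an elliptic curve), the identity reduces to the defining relation $a_1+b_1\equiv mL_2-(m-1)L_1$ of the target linear system.
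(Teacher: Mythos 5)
Your proof is correct and follows essentially the same route as the paper's: both arguments establish the lemma by explicitly constructing the inverse map, using the fact that a point of $E$ determines a unique fiber of each ruling so that the chain propagates uniquely from a distinguished point, with the closure guaranteed by the defining linear equivalence $a_1+b_1\sim mL_2-(m-1)L_1$. The paper phrases this as an induction on $m$ (peeling off the two outer $R_2$-fibers and exchanging the roles of $L_1$ and $L_2$), whereas you unroll that induction into the closed-form observation that $\sigma\tau$ is translation by $L_2-L_1$; the divisor-class bookkeeping is identical, and your explicit check that the distinguished pair does lie in $|mL_2-(m-1)L_1|$ is a small but welcome addition the paper leaves implicit.
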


\begin{proof}
We describe the inverse map. Its existence is obvious if $m=1$. We proceed by induction on $m$. Let $a +b \in |mL_2-(m-1)L_{1}|$. Then $|L_2-a|=\{a'\}$ and $|L_2-b|=\{b'\}$. Therefore $a'+b' \sim (m-1)L_{1}-(m-2)L_2$ and we are done by induction, exchanging the roles of $L_1$ and $L_2$.
\end{proof}

We note that if $m \leq p$, then any chain of length $2m-1$ is
contained in a hyperplane. This fact will be used in the next section.

Assume we have  a chain of length $2m-1$ contained in a  hyperplane section $h$ of $R$. Let $\Gamma_1$ and $\Gamma_2$ be the sections of the rulings on $R_1$ and $R_2$ contained in $h$.  
 Then the distinguished points are $a_{1}:=\Gamma_{1} \cap f_{2,1}$ and 
$b_{1}:=\Gamma_{1} \cap f_{2,m}$.
We will call  $a_{1}+b_{1}$ {\it the $2$-cycle on $\Gamma_{1}$ associated to the chain}. We note that 
\begin{equation}
  \label{eq:impodd}
  a_{1}+b_1 \in {f}_*(|mL_{2}-(m-1)L_1|),
\end{equation}
where $f: E \to \Gamma_1 \cong \PP^1$ is the morphism determined by the linear 
series $|L_1|$.  
The chain intersects $\Gamma_1$ and $\Gamma_2$ in a total of $2m-1$ points, distributed as $m-1$ on $\Gamma_1$ and $m$ on $\Gamma_2$. 
They  will be called {\it the nodes of $h$ associated to the chain}. The nodes on $\Gamma_1$ will be called the {\it marked nodes} of $h$.
\begin{figure}[ht]
\[
\includegraphics[width=4cm]{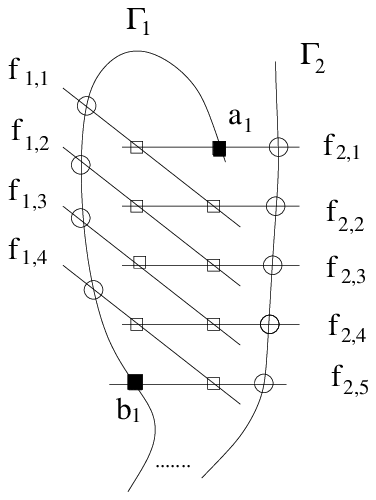}
\]
\caption{}
\label{fig:oddchain}
\end{figure}

Figure \ref{fig:oddchain} shows a chain of length $9$  contained in a hyperplane section.
All intersection points between the chain and $E$ are marked with a box, the two \emph{distinguished points} are marked with filled boxes, the  \emph{associated nodes} are marked with circles, the ones on $\Gamma_1$ are the \emph{marked nodes}.

In Figure \ref{fig:oddchain-norm} we describe the 
stable model of the partial normalization at the associated nodes of 
a hyperplane section containing  a chain: in the stable model all rulings are contracted, so that the two distinguished points  lying on 
$\Gamma_1$ are identified, creating a node.

\begin{figure}[ht]
\[
\includegraphics[width=12cm]{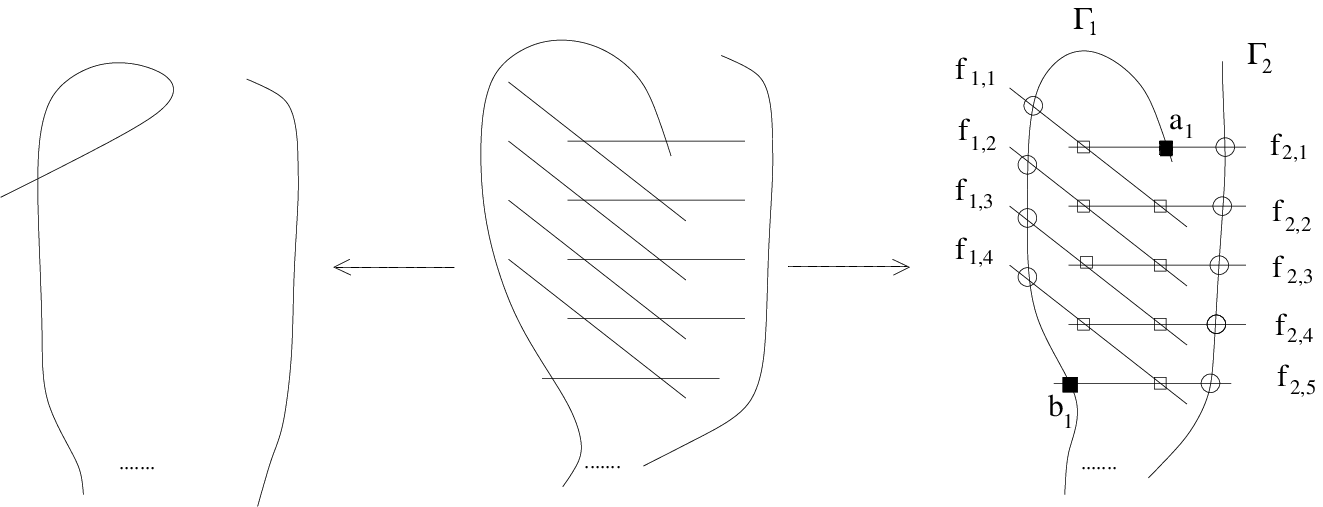}
\]
\caption{}
\label{fig:oddchain-norm}
\end{figure}


\section{Limits of nodal curves}\label{sec:limnod}


Let 
$ \alpha_{1}, \ldots,  \alpha_{p}$ be nonnegative integers such that 
\begin{equation}
  \label{eq:condex2}
  p=\displaystyle\sum_{j=1}^p j\alpha_j.
\end{equation}

\begin{definition} \label{def:v}
We define $V'(\alpha_{1}, \ldots,  \alpha_{p})$ to be the locally closed subset of
$|\O_R(1)|$ consisting of nodal curves $C$ not passing through the singular locus of $\X$ and containing exactly
$\alpha_{j}$  chains of length $2j-1$, for $j=1,\ldots,p$.  Condition \eqref{eq:condex2} implies that every element of $V'(\alpha_{1}, \ldots,  \alpha_{p})$ contains $p$ lines in $|\mathfrak{f}_2|$, thus it  contains
$\mathfrak{s}_2$. Hence the elements of $V'(\alpha_{1}, \ldots,  \alpha_{p})$ are in one-to-one correspondence with a locally closed subset of
$|H_0|$ (cf. \eqref{eq:classnuova}), which we denote by 
$V(\alpha_{1}, \ldots,  \alpha_{p})$. 
\end{definition}

Given a curve $C$ in $V(\alpha_{1}, \ldots,  \alpha_{p})$, we  denote by $\Gamma_1$ the section of the ruling on $R_1$ contained  in $C$ (i.e., $C$ is the union of $\Gamma_1$ and of  chains).
The curve $C$ comes  equipped with the subscheme of its
\begin{equation}
  \label{eq:numeronodi}
  \delta=\delta(\alpha_{1}, \ldots,  \alpha_{p}):= 
\displaystyle\sum_{j=1}^p (j-1) \alpha_{j} 
\end{equation}
\emph{marked nodes}  lying in the smooth locus of $R$. Set $g=p-\delta=  \sum_{j=1}^p\alpha_j$.

\begin{proposition}\label{prop:rightdim}  Under the condition \eqref{eq:condex2}, we have:
\begin{itemize}
\item [(i)] $V(\alpha_{1}, \ldots,  \alpha_{p})$ is a smooth component of $V_{|H_0|,\delta}(R)$ of the \emph{expected dimension} $g$;
\item [(ii)] $V(\alpha_{1}, \ldots,  \alpha_{p})$ is a component of the flat limit of $V_{\vert H\vert,\delta}(S)$ with
$(S,H)\in \K'_p$ general.
\end{itemize}
\end{proposition}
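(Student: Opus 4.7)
My plan for (i) is to exhibit $V(\alpha_1,\ldots,\alpha_p)$ as an irreducible family of dimension $g$ inside $V_{|H_0|,\delta}(R)$ and then invoke Remark \ref{rem:sing} to conclude it is a smooth component. By Lemma \ref{lemma:oddchain}, each chain of length $2j-1$ is birationally parameterised by $|jL_2-(j-1)L_1|$ on $E$, which is a $\PP^1$ (being a degree-$2$ linear series on an elliptic curve). Hence the irreducible moduli space of unordered tuples of $\alpha_j$ chains of length $2j-1$, for $j=1,\ldots,p$, has dimension $\sum_j \alpha_j = g$.

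The section $\Gamma_1 \in |\mathfrak{s}_1+(g-1)\mathfrak{f}_1|$ is then uniquely determined by these chains. Indeed, the embedding identity $\O_E(1)=L_2^{\otimes p}$ gives $\mathfrak{s}_1|_E \sim pL_2-(p-1)L_1$, so that $(\mathfrak{s}_1+(g-1)\mathfrak{f}_1)|_E \sim pL_2 - \delta L_1$, which coincides with the class of the sum of distinguished divisors of all chains, namely $\sum_j \alpha_j (jL_2-(j-1)L_1)$. The restriction map
\[
H^0(R_1, \O_{R_1}(\mathfrak{s}_1+(g-1)\mathfrak{f}_1)) \longrightarrow H^0(E, (\mathfrak{s}_1+(g-1)\mathfrak{f}_1)|_E)
\]
has kernel $H^0(R_1, \O_{R_1}(-\mathfrak{s}_1+(g-3)\mathfrak{f}_1)) = 0$, while both source and target have dimension $2g$ (the latter by Riemann--Roch on $E$); hence it is an isomorphism, and any distinguished divisor on $E$ lifts uniquely to some $\Gamma_1$. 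This yields $\dim V(\alpha_1,\ldots,\alpha_p) = g$. Since $R$ has trivial dualizing sheaf (by the mutual anticanonicity of $R_1$ and $R_2$), $h^1(\O_R)=0$ (by Mayer--Vietoris), and $H_0$ is globally generated and primitive, Remark \ref{rem:sing} yields that $V_{|H_0|,\delta}(R)$ is smooth of pure expected dimension $g$. Hence $V(\alpha_1,\ldots,\alpha_p)$ is a smooth component, proving (i).

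For (ii), the $\delta$ marked nodes of any $C \in V(\alpha_1,\ldots,\alpha_p)$ lie on $R_1 \setminus E$, hence off the singular locus of $\mathcal S$; moreover, the partial normalization of $C$ at these nodes is connected, since each chain remains attached to $\Gamma_1$ through its two unmarked distinguished points on $E$. Applying Lemma \ref{lem:defo} pointwise, $V(\alpha_1,\ldots,\alpha_p)$ is contained in the closure of the relative Severi variety $\V_{f;p,\delta}$. Since $\V_{f;p,\delta}$ has equidimensional fibres of dimension $g$ over $\mathbb D^*$, its closure in $\PP(f_*(\H))$ is of dimension $g+1$, and upper semicontinuity of fibre dimension forces the flat limit at $t=0$ to be pure of dimension $g$; hence $V(\alpha_1,\ldots,\alpha_p)$, being irreducible of dimension $g$, is a component of this flat limit.

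The main subtle point is the dimension count, specifically the verification that the constraints on distinguished points on $E$ imposed by all chains together are mutually compatible and determine a unique section $\Gamma_1$; this hinges on the embedding identity $\O_E(1)=L_2^{\otimes p}$ (which forces the linear equivalence matching) and the vanishing of the kernel of the restriction map above.
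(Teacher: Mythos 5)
Your proof is correct and follows essentially the same route as the paper's: both parametrize $V(\alpha_1,\ldots,\alpha_p)$ by the $g$-dimensional product $\prod_j\Sym^{\alpha_j}|jL_2-(j-1)L_1|$ via Lemma \ref{lemma:oddchain}, deduce smoothness and the dimension of the ambient Severi variety from the regularity statement of Remark \ref{rem:sing} (non-disconnecting marked nodes off the singular locus), and obtain (ii) from Lemma \ref{lem:defo}. The only difference is in one step and is cosmetic: where you recover $\Gamma_1$ uniquely from the distinguished points via the restriction isomorphism $H^0(R_1,\O_{R_1}(\mathfrak{s}_1+(g-1)\mathfrak{f}_1))\to H^0(E,\O_E(\mathfrak{s}_1+(g-1)\mathfrak{f}_1))$, the paper instead observes that the chains meet $E$ in a divisor of $|\O_E(1)|$, hence lie in a unique hyperplane whose residual section is $\Gamma_1$ --- an equivalent uniqueness argument.
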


\begin{proof} By Lemma \ref{lemma:oddchain} we have a morphism  
\[
\xymatrix{
\nu: V(\alpha_{1}, \ldots,  \alpha_{p}) \ar[r] &  
\displaystyle\prod_{j=1}^p \Sym^{\alpha_j}|jL_2-(j-1)L_{1}|
} 
\]
The target is irreducible of dimension $g$, by \eqref{eq:condex2} and \eqref{eq:numeronodi}. Take any point $\eta$ therein.
By Lemma \ref{lemma:oddchain} each coordinate of $\eta$  uniquely defines  a chain. The reduced intersection between the union of these chains  and $E$ is an effective divisor  $D_{\eta}$  lying in 
$|(\sum j\alpha_j)L_2|=|pL_2|=|\O_E(1)|$, by \eqref{eq:condex2}. Hence $\nu$ is dominant and any fiber of $\nu$ is a point. Therefore $V(\alpha_{1}, \ldots,  \alpha_{p})$ is irreducible of dimension $g$. 

 Since the $\delta$ nodes are not disconnecting  (cf. \S\;\ref{sec:chains}), the tangent space to $V_{|H_0|,\delta}(R)$ at a point of $V(\alpha_{1}, \ldots,  \alpha_{p})$ has dimension $g$.
Hence (i) follows.  Assertion (ii) follows by Lemma \ref {lem:defo}.  
\end{proof}

The next result follows from the description of the stable model of the partial normalization at the associated nodes of a hyperplane section containing a chain in \S\;\ref{sec:chains} and the fact that $C$ is obtained by removing the component $\mathfrak{s}_2$ from a hyperplane section of $R$. 

\begin{lemma} \label{lemma:stablemodnorm}
  Let $C$ be a curve in $V(\alpha_{1}, \ldots,  \alpha_{p})$ and $\overline{C}$ the stable model of its partial normalization at its $\delta$ marked nodes. Then
 $\overline{C}$ is the image of $\Gamma_1 \cong \PP^1$ by the morphism identifying each distinguished pair of points of each  chain contained in $C$. 
Thus  $\overline{C}$ has arithmetic genus $g$. 
\end{lemma}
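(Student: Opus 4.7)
The plan is to trace through the partial normalization and subsequent stable reduction explicitly, using the chain structure from Section~\ref{sec:chains} and Definition~\ref{def:v}.

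First I would pin down the incidence structure of $C$. By construction $C = \Gamma_1 \cup (\text{chains})$, where each of the $g = \sum_j \alpha_j$ chains of length $2m-1$ contributes: (i) its $m-1$ fibers $f_{1,i} \subset R_1$, each meeting $\Gamma_1$ transversally at a single point lying in the smooth locus of $R$ (these are the $\delta = \sum_j (j-1)\alpha_j$ marked nodes of $C$); (ii) its two extreme fibers $f_{2,1}$ and $f_{2,m} \subset R_2$, meeting $\Gamma_1$ on $E$ at the distinguished points $a_1$ and $b_1$; and (iii) its $2m-2$ internal nodes, located on $E$ where consecutive lines of the chain meet. In particular, the marked nodes are exactly the nodes of $C$ lying off $E$.

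Next I would form the partial normalization $\widetilde C \to C$ at the $\delta$ marked nodes. By construction this separates $\Gamma_1$ from each fiber $f_{1,i}$ at its marked node while leaving intact every other intersection, in particular the distinguished attachments $a_1, b_1$ of each chain to $\Gamma_1$. Hence $\widetilde C$ is the union of $\Gamma_1 \cong \PP^1$ together with the $g$ chains, each chain still connected internally and attached to $\Gamma_1$ only at its two distinguished points.

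Finally I would reduce $\widetilde C$ to its stable model by iteratively contracting smooth rational components meeting the rest of $\widetilde C$ in fewer than three points. Fix a single chain of length $2m-1$. After the partial normalization, each inner fiber $f_{1,i}$ ($1\le i\le m-1$) has exactly two incidences with the rest of $\widetilde C$ (on $f_{2,i}$ and $f_{2,i+1}$) and is contracted, thereby identifying those two points; consequently each interior $f_{2,j}$ ($1<j<m$) becomes a smooth rational component with exactly two nodes and is contracted in turn, and eventually the same holds for $f_{2,1}$ and $f_{2,m}$. A short induction on $m$ shows that the net effect of these contractions is to identify $a_1$ with $b_1$ on $\Gamma_1$. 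Performing this contraction independently on each of the $g$ chains yields a morphism $\Gamma_1 \cong \PP^1 \to \overline C$ identifying each of the $g$ distinguished pairs, whose image is an irreducible rational curve with exactly $g$ nodes. Hence $p_a(\overline C) = g$, proving the lemma. The only mild subtlety is the order of contractions within a single chain, which is a purely combinatorial book-keeping exercise and poses no real obstacle.
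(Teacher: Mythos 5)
Your proof is correct and takes essentially the same route as the paper, whose own ``proof'' is a one-sentence reference to the description of chains and their stable reduction in \S\ref{sec:chains} (cf.\ Figure \ref{fig:oddchain-norm}); you have simply written out the incidence structure, the partial normalization, and the contractions in full. The only cosmetic slip is that the interior fibers $f_{2,j}$ already carry exactly two special points before any contraction (they meet only $f_{1,j-1}$ and $f_{1,j}$), so the whole chain is from the start a path of two-pointed rational curves joining $a_1$ to $b_1$ and no iterative destabilization is needed---this does not affect the argument.
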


The case $\delta=0$ corresponds to
$(\alpha_{1}, \ldots,  \alpha_{p})=(p,0,\ldots,0)$, and then $\overline{C}$ has $p$ nodes. Then, in the degeneration of a general $(S,H)\in \K'_p$ to $(R,H_0)$,  the  general element in $|H|$ degenerates  to an irreducible $p$--nodal rational nodal curve. 


\section{Limits of $k$-gonal nodal curves} \label{S:kn}


Let $k \geq 2$ be an integer. We keep the notation introduced in \S \ref {sec:limnod}.

\subsection{$k$-gonal nodal curves in the central fibre} 

\begin{definition}\label{defn:limkgonal}  
We define $V^k(\alpha_1,\ldots,\alpha_p)$ to be 
the closed subset of  $V(\alpha_1,\ldots,\alpha_p)$
consisting of curves $C$ such that all  $2$--cycles (in number of $g$) on $\Gamma_1$ associated to a chain belong to divisors of the same $g^1_{k}$ on $\Gamma_1$.
\end{definition}

The following is a consequence of Lemma \ref{lemma:stablemodnorm}.

\begin{lemma} \label{lemma:kgonale}
 Let $C$ be a curve in $V( \alpha_1,\ldots,\alpha_p)$.  The stable model of the partial normalization of $C$ at its $\delta$ marked nodes lies in $\overline{\M^1_{g,k}}$ if and only if $C$ lies in $V^k(\alpha_1,\ldots,\alpha_p)$.  In particular  $V^k( \alpha_1,\ldots,\alpha_p)$  fills up one or more components of
$V^k_{|H_0|,\delta}(R)$.
\end{lemma}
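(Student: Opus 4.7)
The plan is to combine Lemma \ref{lemma:stablemodnorm} with the admissible covers characterization of $\overline{\M^1_{g,k}}$ quoted from \cite{HM} in \S\ref{S:Sevvar}. By Lemma \ref{lemma:stablemodnorm}, the stable model $\overline C$ is an irreducible rational nodal curve of arithmetic genus $g$ with exactly $g=\sum_{j=1}^p \alpha_j$ nodes, and its normalization is the map $\nu:\Gamma_1\cong\PP^1\to \overline C$ that glues precisely the $g$ distinguished pairs $(a_i,b_i)$, one for each chain contained in $C$.

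Next I would invoke the admissible covers description of $\overline{\M^1_{g,k}}$: $\overline C\in\overline{\M^1_{g,k}}$ if and only if there exists a curve $X$ stably equivalent to $\overline C$ carrying a degree $k$ admissible cover $\varphi:X\to Y$ onto a stable pointed curve of genus $0$. Since $\overline C$ is irreducible with smooth rational normalization, any such $X$ is obtained from $\overline C$ by inserting chains of rational curves at some subset of its $g$ nodes, and $\varphi$ restricts to a morphism $\overline\varphi:\Gamma_1\to Y_0$ of degree $k$, where $Y_0$ is the component of $Y$ hit by the main component $\Gamma_1\subset X$. The admissibility of $\varphi$ at each node $\nu(a_i)=\nu(b_i)$, even after inserting a rational chain between $a_i$ and $b_i$ and mapping it into a chain in $Y$ with matching ramification, forces $\overline\varphi(a_i)$ and $\overline\varphi(b_i)$ to agree after contracting semistable rational tails in $Y$; conversely, any degree $k$ map $\overline\varphi:\Gamma_1\to \PP^1$ identifying each pair $(a_i,b_i)$ in a common fibre extends to an admissible cover $\overline C\to \PP^1$.

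Hence $\overline C\in\overline{\M^1_{g,k}}$ if and only if $\Gamma_1$ carries a $g^1_k=|A|$ with the property that each of the $g$ distinguished $2$-cycles $a_i+b_i$ is contained in a divisor of $|A|$, which is exactly the condition defining $V^k(\alpha_1,\ldots,\alpha_p)$ in Definition \ref{defn:limkgonal}. The ``in particular'' assertion is then immediate from Proposition \ref{prop:rightdim}(i) and the definition of $V^k_{|H_0|,\delta}(R)$ as the locus in $V_{|H_0|,\delta}(R)$ whose partial normalizations have stable model in $\overline{\M^1_{g,k}}$.

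The only delicate point in the argument above is justifying that a priori more exotic admissible covers, in which several nodes of $\overline C$ are blown up and mapped into a nontrivial rational tree $Y$, do not enlarge the class of $k$-gonal stable models; the key observation is that $\Gamma_1$ is always the unique non-exceptional component of any such $X$, so the combinatorics of ramification matchings along the inserted chains forces compatibility of the pairs $(a_i,b_i)$ under $\overline\varphi$, reducing to the case $Y=\PP^1$ treated above.
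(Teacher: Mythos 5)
Your overall strategy is the one the paper intends: the paper derives this lemma in a single line from Lemma \ref{lemma:stablemodnorm} together with the admissible-cover characterization of $\overline{\M^1_{g,k}}$ recalled in \S\ref{S:Sevvar}, and you are supplying the details it omits. Your ``if'' direction is fine: a degree $k$ map on $\Gamma_1$ whose fibres absorb all the distinguished $2$-cycles extends, by the standard construction of inserting rational components in source and target at each node, to an admissible cover of a curve stably equivalent to $\overline{C}$.

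The gap is in the ``only if'' direction, precisely at the point you flag and then dismiss. It is not true that admissibility forces $\overline\varphi(a_i)=\overline\varphi(b_i)$ for every pair, nor that the exotic covers ``reduce to the case $Y=\PP^1$.'' Since both branches of each node of $\overline C$ lie on $\Gamma_1$, which maps onto a single component $Y_0$, every node must indeed be replaced in $X$ by a rational chain $Z_i$ joining $a_i$ to $b_i$; but nothing prevents an intermediate component of $Z_i$ from dominating $Y_0$, in which case $a_i$ and $b_i$ may map to two \emph{distinct} nodes of $Y$ on $Y_0$ (e.g.\ $Y=Y_0\cup Y_1\cup Y_2$ with $Z_{i,1}\to Y_1$, $Z_{i,2}\to Y_0$, $Z_{i,3}\to Y_2$). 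Relatedly, $\deg(\overline\varphi|_{\Gamma_1})=k'$ may be strictly less than $k$. The conclusion survives, but only after a degree count you do not make: each ``bad'' pair contributes a component of its chain dominating $Y_0$, hence at least $1$ to $k-k'$, so there are at most $k-k'$ bad pairs; one then takes the $g^1_k$ to be the fibres of $\overline\varphi|_{\Gamma_1}$ augmented by $k-k'$ base points, one placed on a point of each bad pair, and every $2$-cycle $a_i+b_i$ is then contained in a divisor of this $g^1_k$ (which is all that Definition \ref{defn:limkgonal} requires). Without this step — or an alternative argument, e.g.\ via limits of the $g^1_k$'s as rank one torsion-free sheaves on $\overline C$, distinguishing nodes where the limit sheaf is or is not locally free — the equivalence is asserted rather than proved at its only nontrivial point.
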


 If $g\leq 2(k-1)$, then $V^k( \alpha_1,\ldots,\alpha_p)=V( \alpha_1,\ldots,\alpha_p)$ by Lemma \ref{lemma:stablemodnorm}. As we will see in Proposition \ref {prop:riempie} below, this also follows  by a parameter count concerning $g^ 1_k$'s on $\PP^ 1$.  By the  argument as in the proof of Proposition \ref{degarg} (but also by the same parameter count as above), the expected dimension of $V^k( \alpha_1,\ldots,\alpha_p)$ is $2(k-1)$ when $g > 2(k-1)$. Our objective is to prove that, under suitable conditions, 
$V^k( \alpha_1,\ldots,\alpha_p)$ is nonempty of the expected dimension. To do so, we need some intermediate  results.

\subsection{Some technical results}\label{ssec:tech} Recall the morphism $f:  E \rightarrow  \PP^1$ determined  by  $\vert L_1\vert$. We have  the induced map
$f^{(2)}: \Sym^2(E) \rightarrow \Sym^2(\PP^1)$.

As customary, we identify $\Sym^2(\PP^1)$ with  $\PP^2$:  fix an irreducible conic  $\Delta\cong \PP^ 1$, and identify a divisor $x+y$ of $\Delta$ with:
\begin{itemize}
\item  the pole of the line $\langle x,y\rangle$ with respect to $\Delta$, if $x\neq y$;
\item the point $x\in \Delta$, if $x=y$.
\end{itemize}
In this way $\Delta$ is identified with the \emph{diagonal} of $\Sym^2(\PP^1)$ and the \emph{coordinate curve} $\{x+y, y\in \PP^ 1\}$ with the tangent line $\ell_x$ to $\Delta$ at $x$.

We denote by  $\mathfrak{c}_{j}$, for $1\le j \le n$, the image via $f^{(2)}$ of the smooth rational curves in $\Sym^2(E)$ defined by the 
pencils $|jL_{2}-(j-1)L_{1}|$. These are distinct conics, each intersecting $\Delta$ in four distinct points corresponding to the ramification points of the pencils.

Consider $\mathbb Q=\PP^ 1\times \PP^ 1$ with the two projections $\pi_i: \mathbb Q\to \PP^ 1$, $i=1,2$.  Fix a positive integer $k$ and look at the line bundle $\O_\mathbb Q(k,k):=p_1^ *(\O_{\PP^ 1}(k))\otimes p_2^ *(\O_{\PP^ 1}(k))$, whose space of sections is $H^ 0(\PP^ 1, \O_{\PP^ 1}(k))^ {\otimes 2}$. The two subspaces $\Sym^ 2(H^ 0(\PP^ 1, \O_{\PP^ 1}(k)))$ and 
$\wedge^ 2 H^ 0(\PP^ 1, \O_{\PP^ 1}(k))$ are invariant  (resp. anti-invariant) under the natural involution that exchanges the coordinates. Hence they are pull-backs of sections of line bundles, $\mathcal O_k^ +$ and $\mathcal O_k^ -$ respectively, on $\Sym^ 2(\PP^ 1)$. 

Let us focus on  $\mathcal O_k^ -$. One has 
\[H^ 0(\Sym^ 2(\PP^ 1), \mathcal O_k^ -)\cong \wedge^ 2 H^ 0(\PP^ 1, \O_{\PP^ 1}(k))\cong
H^ 0(\PP^2, \mathcal O_{\PP^ 2}(k-1)).\]
Therefore the linear system
$\vert \mathcal O_k^ -\vert$ identifies with $\vert \mathcal O_{\PP^ 2}(k-1))\vert$  and also with $\PP( \wedge^ 2 H^ 0(\PP^ 1, \O_{\PP^ 1}(k)))$.
Under the former isomorphism,
a point  $\mathfrak{g}$ of the grassmannian $\mathbb G(1,k)\subset \PP( \wedge^ 2 H^ 0(\PP^ 1, \O_{\PP^ 1}(k)))$, which can be identified with a linear series $g^1_k$ on $\PP^1$,  corresponds to the degree $k-1$ curve in $\PP^ 2$
\[ C_{\mathfrak{g}}= \{ W \in \Sym^2(\PP^1) \; | \; \mathfrak{g}(-W) \geq 0\}.\]
The family of curves $\{C_\mathfrak g\}_{\mathfrak g\in \mathbb G(1,k)}$  is irreducible of dimension $2(k-1)=\dim(\mathbb G(1,k))$.

\begin{lemma} \label{lemma:trans} For general choices of  $L_1$, we have:
\begin{itemize}
\item [(i)] no curve $\mathfrak{c}_{j}$ is contained in a curve $C_{\mathfrak{g}}$, for  $\mathfrak{g}=g^1_k$ on $\PP^1$;

\item [(ii)]  for a  general  $\mathfrak{g}$, the curve 
$C_{\mathfrak{g}}$ intersects each $\mathfrak{c}_{j}$ transversally in $2(k-1)$ distinct points;

\item [(iii)]  in addition, none of these $2(k-1)$ points  is fixed varying $\mathfrak g$. 
\end{itemize}\end{lemma}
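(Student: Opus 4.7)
The three statements are best tackled in the order (iii), (i), (ii), with (i) being the main obstacle.

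For (iii), fix $W = x+y \in \mathfrak{c}_j$: the condition $W \in C_{\mathfrak{g}}$ amounts to the $2$-plane $\mathfrak{g} \subset V := H^0(\PP^1, \O(k))$ meeting the $(k-1)$-dimensional subspace $H^0(\O(k-W))$ nontrivially, a codimension-one Schubert condition on $\mathbb G(1,k)$. Thus for each $W$ some $\mathfrak{g}$ misses $W$, and the family $\{C_{\mathfrak{g}}\}_{\mathfrak{g}}$ has no base point on $\mathfrak{c}_j$.

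For (i), I would pull back to $\mathbb Q = \PP^1 \times \PP^1$. Writing $\mathfrak{g} = \langle s_0,s_1\rangle$, the anti-invariant form $\omega(x,y) := s_0(x)s_1(y)-s_0(y)s_1(x)$ on $\mathbb Q$ cuts out the divisor $\bar{\Delta}+\tilde{C}_{\mathfrak{g}}$, where $\bar\Delta$ is the diagonal and $\tilde C_{\mathfrak{g}}$ is the pullback of $C_{\mathfrak{g}}$ under the quotient $\psi:\mathbb Q \to \Sym^2(\PP^1)$. Likewise $\mathfrak{c}_j$ lifts to the image of $\phi_j : E \to \mathbb Q$, $P \mapsto (f(P), f(\sigma_j(P)))$, where $\sigma_j$ is the involution of the pencil $\pi_j:E\to \PP^1$ associated to $|jL_2-(j-1)L_1|$. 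In the elliptic group law, $\iota(P) = L_1 - P$ and $\sigma_j(P) = (jL_2-(j-1)L_1) - P$ are affine involutions whose composition $\iota\sigma_j$ is translation by $T := j(L_1 - L_2)$, which has infinite order for general $L_1, L_2$. A containment $\mathfrak{c}_j \subseteq C_{\mathfrak{g}}$ forces $\omega \circ \phi_j \equiv 0$ on $E$; equivalently, $(s_1/s_0)\circ f$ is $\sigma_j$-invariant and descends to a morphism $h : \PP^1 \to \PP^1$ of degree $k$ with $(s_1/s_0)\circ f = h\circ \pi_j$. Evaluating at both sheets of $f$ yields $h(\pi_j P) = h(\pi_j \iota P)$ for all $P \in E$; iterating this relation via the infinite-order translation $T$ produces infinitely many distinct points of $\PP^1$ lying in a single fiber of $h$, contradicting $\deg h = k < \infty$.

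Given (i) and (iii), (ii) follows from B\'ezout and a parameter count. By (i), $\mathfrak{c}_j \cap C_{\mathfrak{g}}$ has length $2(k-1)$ as a $0$-cycle, so only reducedness for a general $\mathfrak{g}$ remains. Form $I_j := \{(\mathfrak{g},W)\in \mathbb G(1,k)\times \mathfrak{c}_j : \mult_W(\mathfrak{c}_j \cdot C_{\mathfrak{g}}) \geq 2\}$. For generic $W \in \mathfrak{c}_j$, the two linear conditions on $|\O_{\PP^2}(k-1)|$ of \emph{passing through $W$} and \emph{having tangent direction $T_W \mathfrak{c}_j$ at $W$} are linearly independent; by non-degeneracy of the Pl\"ucker embedding $\mathbb G(1,k) \hookrightarrow |\O_{\PP^2}(k-1)|$ they cut out distinct codimension-one subvarieties of $\mathbb G(1,k)$, so their intersection is codimension $2$. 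Hence $\dim I_j \leq 2(k-1)-2+1 = 2k-3 < \dim \mathbb G(1,k)$, so the projection $I_j\to \mathbb G(1,k)$ is not dominant, and a general $\mathfrak{g}$ yields a reduced (hence transverse) intersection $\mathfrak{c}_j\cap C_{\mathfrak{g}}$ of $2(k-1)$ distinct points.
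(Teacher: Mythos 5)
Your proposal is correct in substance but takes a genuinely different route from the paper's. The paper disposes of all three parts by a short degeneration: letting $L_1$ tend to $L_2$, each conic $\mathfrak{c}_{j}$ degenerates to the diagonal $\Delta$, and the statements for $\Delta$ are classical ($\Delta\not\subseteq C_{\mathfrak{g}}$ because not every point of $\PP^1$ can be a ramification or base point of a pencil, and $C_{\mathfrak{g}}\cap\Delta$ consists of the $2(k-1)$ distinct simple ramification points of a general $\mathfrak{g}$ by Riemann--Hurwitz); openness of non-containment and of transversality then gives the result for general $L_1$. You instead work directly at a general $L_1$: for (i) you lift to $\PP^1\times\PP^1$ and exploit that the two involutions $\iota$ and $\sigma_j$ of $E$ compose to translation by $j(L_1-L_2)$, so that an invariant nonconstant function would have an infinite fibre; for (ii) you run an incidence-variety dimension count; for (iii) you identify the base-point condition with a codimension-one Schubert cycle. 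Your argument for (i) is more explicit --- it isolates the precise arithmetic condition that $j(L_1-L_2)$ be non-torsion for $1\le j\le p$, still a dense open condition on $L_1$ since only finitely many $j$ occur --- whereas the paper's specialization is shorter and makes the number $2(k-1)$ and the transversality in (ii) completely transparent.

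One step in your part (ii) needs a stronger justification than the one you give. From the independence of the two linear conditions (passing through $W$ and matching the tangent direction of $\mathfrak{c}_j$ at $W$) on $|\O_{\PP^2}(k-1)|$ you conclude that they cut $\mathbb G(1,k)$ in codimension $2$ because the two hyperplane sections are ``distinct codimension-one subvarieties''. Non-degeneracy alone does not yield this: two distinct hyperplane sections of a non-degenerate irreducible variety can share a component (e.g.\ two tangent hyperplane sections of a quadric surface in $\PP^3$ at points of a common ruling), in which case their intersection has codimension $1$. The conclusion is nevertheless correct here, and the cleanest repair is to observe that the first section is the Schubert divisor $\sigma_1\bigl(H^0(\O_{\PP^1}(k)(-W))\bigr)$, which is irreducible and linearly spans its hyperplane, hence cannot lie in the second hyperplane; alternatively, use that $\Pic(\mathbb G(1,k))$ is generated by the Pl\"ucker polarization, so two distinct hyperplane sections cannot share a component. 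With that repair (and the remark that the fibres of $I_j\to\mathfrak{c}_j$ over the finitely many special points of $\mathfrak{c}_j$, such as those on $\Delta$, still have codimension at least $1$ in $\mathbb G(1,k)$ and so do not raise $\dim I_j$ above $2k-3$), your dimension count goes through.
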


\begin{proof} By moving $L_{1}$, each 
conic $\mathfrak c_{j}$ moves in $\PP^ 2$ in a $1$-dimensional family containing the diagonal $\Delta$ (obtained for $L_1=L_2$). But $\Delta$ is not contained in any $C_\mathfrak g$, proving (i).  Similarly, it suffices to prove (ii) for the intersection with $\Delta$ of a $C_{\mathfrak{g}}$, with  $\mathfrak{g}$ general, which is obvious, since the intersection points correspond to the ramification points of ${\mathfrak{g}}$.  Part (iii) easily follows. 
 \end{proof}

Lemma \ref{lemma:oddchain} can be rephrased as the following lemma, whose proof  is left to the reader. 

\begin{lemma} \label{lemma:coppie}
A chain of length $2m-1$ determines 
and is determined by a point on the conic $\mathfrak{c}_{m}$. 
\end{lemma}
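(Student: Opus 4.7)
The claim is essentially a reformulation of Lemma \ref{lemma:oddchain} via the morphism $f^{(2)}$. My plan is to factor the desired bijection as the composition of the one in Lemma \ref{lemma:oddchain} with the restriction of $f^{(2)}$ to the pencil $\gamma_m:=|mL_2-(m-1)L_1|$, viewed as a smooth rational curve in $\Sym^2(E)$. Since by the very definition of $\mathfrak{c}_m$ given at the start of \S\,\ref{ssec:tech} one has $\mathfrak{c}_m=f^{(2)}(\gamma_m)$, this restriction is already a surjective morphism $\gamma_m\to \mathfrak{c}_m$ between two smooth rational curves (the target being an irreducible conic in $\PP^2$, hence a smooth conic). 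So the only thing to verify is that $f^{(2)}|_{\gamma_m}$ has degree one, equivalently is injective on points.

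To check injectivity, I would argue directly. Suppose $a+b,\, a'+b'\in \gamma_m$ are two divisors with $\{f(a),f(b)\}=\{f(a'),f(b')\}$. Using that $f$ has degree two, hence $f(x)=f(y)$ precisely when $x+y\in |L_1|$, and distinguishing the two possible matchings of preimages (``parallel'' or ``crossed''), a short case analysis either yields $\{a,b\}=\{a',b'\}$ directly (using that on the elliptic curve $E$ two linearly equivalent effective degree-one divisors must coincide), or else produces simultaneously $a+a'+b+b'\in |2L_1|$ and $a+b+a'+b'\sim 2mL_2-(2m-2)L_1$. The latter forces the torsion relation $(L_2\otimes L_1^{-1})^{\otimes 2m}\cong \O_E$ on $E$, which fails for general $L_1,L_2\in \Pic^2(E)$. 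Consequently $a+b=a'+b'$ in $\Sym^2(E)$, and $f^{(2)}|_{\gamma_m}$ is injective.

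The main technical point is the case analysis just sketched; the ingredients it uses are the two degree-two morphisms involved and the rigidity of degree-one divisors on elliptic curves, together with the genericity assumption on $L_1,L_2$ already in force (compare Lemma \ref{lemma:trans}). Combining the resulting isomorphism $\gamma_m\xrightarrow{\cong} \mathfrak{c}_m$ with the bijection of Lemma \ref{lemma:oddchain} yields the claimed one-to-one correspondence between chains of length $2m-1$ and points of $\mathfrak{c}_m$.
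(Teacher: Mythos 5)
Your proposal is correct and follows the route the paper itself intends: the paper's entire ``proof'' of this lemma is the remark that it is a rephrasing of Lemma \ref{lemma:oddchain}, with the details left to the reader. Your argument supplies exactly the step that remark glosses over --- the injectivity of $f^{(2)}$ restricted to the pencil $|mL_2-(m-1)L_1|$, equivalently that $\mathfrak{c}_m$ is a genuine conic and not a doubly covered line --- and your case analysis, which reduces the bad cases to the relation $(L_2\otimes L_1^{-1})^{\otimes 2m}\cong \O_E$ and rules it out because $L_2\otimes L_1^{-1}$ is non-torsion in $\Pic^0(E)$ for general $L_1,L_2$, is sound.
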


\subsection{Nonemptiness and dimension of limit $k$-gonal nodal curves systems}  \label{nonempty} 

We can now prove the desired result about nonemptiness and dimension
of $V^k( \alpha_1,\ldots,\alpha_p)$.

\begin{prop} \label{prop:riempie}
If   \eqref{eq:condex2} and
\begin{equation}
\label{eq:condex3} \alpha_{j} \leq 2(k-1) \; \; \mbox{for all} \; \; j 
\end{equation}
hold, the variety 
$V^k( \alpha_1,\ldots,\alpha_p)$
is nonempty of the expected dimension $\min\{2(k-1),g\}$  and is a component
of
${V^k_{|H_0|,\delta}(R)}$.  Furthermore, for the general curve in $V^k( \alpha_1,\ldots,\alpha_p)$, the family of $g^1_k$'s on $\Gamma_1$ satisfying the condition in Definition \ref{defn:limkgonal} has dimension $\max\{0,\rho(g,1,k)\}$. 
\end{prop}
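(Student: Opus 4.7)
The plan is to study the projections of the incidence variety
\[
I := \left\{(C, \mathfrak{g}) \in V(\alpha_1,\ldots,\alpha_p) \x \mathbb{G}(1,k) : \text{every } 2\text{-cycle of } C \text{ lies on } C_\mathfrak{g}\right\},
\]
with $\pi_1 : I \to V(\alpha_1,\ldots,\alpha_p)$ and $\pi_2 : I \to \mathbb{G}(1,k)$. By Definition \ref{defn:limkgonal} one has $V^k(\alpha_1,\ldots,\alpha_p) = \pi_1(I)$, so every assertion of the proposition reduces to dimension counts along these projections; concretely, the strategy is to compute $\dim I$ via $\pi_2$ and then extract both $\dim V^k$ and the generic fiber dimension of $\pi_1$.

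For $\pi_2$, I would use Lemma \ref{lemma:coppie} together with Proposition \ref{prop:rightdim}(i) and the map $f^{(2)}$ to identify $V(\alpha_1,\ldots,\alpha_p)$ birationally with $\prod_{j=1}^p \Sym^{\alpha_j}(\mathfrak{c}_j)$, so that a point of $V$ is an unordered $\alpha_j$-tuple of points on each conic $\mathfrak{c}_j \subset \PP^2$. For $\mathfrak{g}$ general in $\mathbb{G}(1,k)$, Lemma \ref{lemma:trans}(ii) gives that $C_\mathfrak{g} \cap \mathfrak{c}_j$ consists of $2(k-1)$ distinct points; under hypothesis \eqref{eq:condex3} the fiber $\pi_2^{-1}(\mathfrak{g})$ is therefore a nonempty finite set of cardinality $\prod_j \binom{2(k-1)}{\alpha_j}$. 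Hence $\pi_2$ is dominant with finite generic fibers, $I$ is nonempty, and $\dim I = \dim \mathbb{G}(1,k) = 2(k-1)$. In particular $V^k(\alpha_1,\ldots,\alpha_p) = \pi_1(I)$ is nonempty and $\dim V^k \leq 2(k-1)$.

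For $\pi_1$, given $C \in V^k$ with associated $2$-cycles $P_1,\ldots,P_g \in \PP^2$ the fiber is
\[
\pi_1^{-1}(C) \;=\; \{\mathfrak{g} \in \mathbb{G}(1,k) : P_i \in C_\mathfrak{g},\ i=1,\ldots,g\},
\]
cut on $\mathbb{G}(1,k) \subset \PP(\wedge^2 H^0(\O_{\PP^1}(k))) \cong |\O_{\PP^2}(k-1)|$ by $g$ hyperplane conditions, so with expected dimension $\max\{0,\,2(k-1)-g\} = \max\{0,\rho(g,1,k)\}$. When $g \leq 2(k-1)$, $\M^1_{g,k}=\M_g$, so by Lemma \ref{lemma:stablemodnorm} one has $V^k = V$, of dimension $g = \min\{2(k-1),g\}$ by Proposition \ref{prop:rightdim}(i); the generic fiber of $\pi_1$ then automatically has dimension $\dim I - \dim V = 2(k-1)-g = \rho(g,1,k) \geq 0$, matching the claim.

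The hard part will be the range $g > 2(k-1)$, where one must show $\dim V^k = 2(k-1)$, equivalently that the generic fiber of $\pi_1$ is finite. I would prove this by a Bertini-style argument, whose crucial input is Lemma \ref{lemma:trans}(iii): as $\mathfrak{g}$ varies in $\mathbb{G}(1,k)$ no point of $C_\mathfrak{g} \cap \mathfrak{c}_j$ remains fixed. Adding the points $P_i$ one at a time, this non-degeneracy prevents any new hyperplane condition $\{P_i \in C_\mathfrak{g}\}$ from being superfluous on the positive-dimensional loci built up in the first $2(k-1)$ steps, so that the first $2(k-1)$ conditions are independent on $\mathbb{G}(1,k)$ for a general choice of $(C,\mathfrak{g}) \in I$; the remaining $g-2(k-1)$ conditions are automatically satisfied by the nonemptiness obtained from $\pi_2$ but leave the intersection $0$-dimensional. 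Combining this with the previous cases gives $\dim V^k(\alpha_1,\ldots,\alpha_p)=\min\{2(k-1),g\}$ and the generic fiber of $\pi_1$ of dimension $\max\{0,\rho(g,1,k)\}$, which is exactly the stated dimension of the family of $g^1_k$'s on $\Gamma_1$ at the general point. That $V^k(\alpha_1,\ldots,\alpha_p)$ is a component of $V^k_{|H_0|,\delta}(R)$ then follows from Lemma \ref{lemma:kgonale}, the dimension match, and the fact (Proposition \ref{prop:rightdim}(i)) that $V(\alpha_1,\ldots,\alpha_p)$ is itself a smooth irreducible component of the ambient Severi variety $V_{|H_0|,\delta}(R)$.
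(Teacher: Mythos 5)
Your overall strategy coincides with the paper's: produce curves in $V^k(\alpha_1,\ldots,\alpha_p)$ by intersecting a general $C_{\mathfrak{g}}$ with the conics $\mathfrak{c}_j$ and choosing $\alpha_j$ of the $2(k-1)$ intersection points (this is where \eqref{eq:condex3} enters), then read off the dimensions from the two projections of the incidence correspondence. Your treatment of nonemptiness, of the upper bound $\dim V^k(\alpha_1,\ldots,\alpha_p)\le 2(k-1)$ via the finiteness of $\pi_2$ (which indeed holds on \emph{every} fibre by Lemma \ref{lemma:trans}(i)), and of the case $g\le 2(k-1)$ all match the paper. One omission: by Definition \ref{def:v}, membership in $V(\alpha_1,\ldots,\alpha_p)$ requires the curve to miss the singular locus of $\X$, and this is precisely what Lemma \ref{lemma:trans}(iii) is used for in the paper (the intersection points move with $\mathfrak{g}$, so for general $\mathfrak{g}$ the resulting chains avoid the finitely many bad points of $E$); you never verify this, and instead redeploy Lemma \ref{lemma:trans}(iii) elsewhere.

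That redeployment is the genuine gap. In the range $g>2(k-1)$ you argue that the successive hyperplane conditions $\{P_i\in C_{\mathfrak{g}}\}$ are independent on $\mathbb G(1,k)$ because, by Lemma \ref{lemma:trans}(iii), no point of $C_{\mathfrak{g}}\cap\mathfrak{c}_j$ stays fixed as $\mathfrak{g}$ varies. But that lemma only controls the base locus of the \emph{full} family $\{C_{\mathfrak{g}}\}_{\mathfrak{g}\in\mathbb G(1,k)}$, whereas at step $i+1$ of your induction the relevant question is whether $P_{i+1}$ is a base point of the \emph{sub}family $\{C_{\mathfrak{g}}\}_{\mathfrak{g}\in Z_i}$, with $Z_i$ the positive-dimensional locus cut out by the first $i$ conditions; since $P_{i+1}$ is not a free point of $\mathfrak{c}_{j}$ but is constrained to lie on $C_{\mathfrak{g}_0}$, nothing you cite rules out that it is automatically satisfied on $Z_i$. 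The independence you assert is exactly what must be proved, so the step would fail as written. The way to close the argument, which is what the paper has in mind with the parameter count invoked just before the statement (the analogue of Proposition \ref{degarg} for the moduli map of $V(\alpha_1,\ldots,\alpha_p)$ and the codimension of $\overline{\M^1_{g,k}}$ in $\overline{\M_{g}}$), is to establish separately the \emph{lower} bound that every nonempty component of $V^k(\alpha_1,\ldots,\alpha_p)$ has dimension at least $2(k-1)$; combined with your upper bound $\dim I\le 2(k-1)$ this forces $\dim V^k(\alpha_1,\ldots,\alpha_p)=2(k-1)$ and the generic finiteness of $\pi_1$ simultaneously, with no Bertini-type induction needed. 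The remaining points (the fibre dimension $\max\{0,\rho(g,1,k)\}$ of $\pi_1$ and the component statement via Lemma \ref{lemma:kgonale} and Proposition \ref{prop:rightdim}) are handled as in the paper.
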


\begin{proof}
Consider the set of curves $\{C_\mathfrak g\}_{\mathfrak g\in \mathbb G(1,k)}$ of dimension $2(k-1)$ above, which is in one-to-one correspondence with the set of 
$g^1_k$'s on $\Gamma_1\cong \PP^1$. A general $C_{\mathfrak{g}}$ intersects each conic $\mathfrak{c}_{j}$ in $2(k-1)$ distinct points by (ii) of Lemma \ref {lemma:trans}.  By   \eqref{eq:condex3},  we can pick  $\alpha_{j}$ of them.  
By (iii) of Lemma \ref {lemma:trans},   each of these $\alpha_{j}$ intersection points gives rise to a chain of length $2j-1$ not intersecting the singular locus of $\X$ with distinguished pair given by the
$2$-cycle corresponding to the point itself.  Then the construction in (the proof of) Proposition \ref{prop:rightdim} yields the existence of curves in 
$V( \alpha_1,\ldots,\alpha_p)$ containing all the above chains. 
These curves lie in $V^k( \alpha_1,\ldots,\alpha_p)$ by construction.

When $g >2(k-1)$, this shows that $V^k( \alpha_1,\ldots,\alpha_p)$ is irreducible (and nonempty) of dimension $2(k-1)$ and that its general element corresponds to finitely many of the curves $C_{\mathfrak{g}}$. When $g \leq 2(k-1)$, one has $V^k( \alpha_1,\ldots,\alpha_p)=
V( \alpha_1,\ldots,\alpha_p)$ and its general element corresponds to a family of dimension $2(k-1)-\sum\alpha_{j}=2(k-1)-g=\rho(g,1,k)$ of the curves $C_{\mathfrak{g}}$. The fact that $V^k( \alpha_1,\ldots,\alpha_p)$ is a component
of
${V^k_{|H_0|,\delta}(R)}$ follows from Lemma \ref{lemma:kgonale}.
\end{proof}

\begin{cor} \label{rem:ram2} Assume that  \eqref{eq:condex2} and  \eqref{eq:condex3} hold. One has:
\begin{itemize}
\item [(i)] if $(S,H)\in \K'_p$ is general, then ${V^k_{|H|,\delta}(S)}$ has a component $V$ of the expected dimension $\min\{2(k-1),g\}$  whose limit when $S$ tends to $R$ contains $V^k( \alpha_1,\ldots,\alpha_p)$ as a component;
\item [(ii)] the normalization $\widetilde{C}$ of the general curve $C$ in $V$ satisfies $\dim (W^1_k(\widetilde{C}))=\max\{0,\rho(g,1,k)\}$;
\item [(iii)]  the general  $g^1_k$ in any component of $W^1_k(\widetilde{C})$ has simple ramification;
\item [(iv)] the $\delta$ nodes of $C$ are non-neutral with respect to the general  $g^1_k$ in any component of $W^1_k(\widetilde{C})$.
\end{itemize}
\end{cor}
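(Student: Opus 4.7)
The plan is to combine Proposition \ref{prop:riempie}, which provides existence, dimension and the $g^1_k$-count in the central fibre $R$ of the degeneration, with semicontinuity/deformation arguments to transfer these properties to the general $(S,H) \in \K'_p$. For (i), by Proposition \ref{prop:riempie} the locus $V^k(\alpha_1,\ldots,\alpha_p)$ is a component of $V^k_{|H_0|,\delta}(R)$ of the expected dimension $\min\{2(k-1),g\}$. When $g > 2(k-1)$, Proposition \ref{degargrel} applies and lifts this to an irreducible component $\V$ of the relative $k$-gonal locus dominating $\mathbb D^*$, whose general fibre gives the required component $V$. When $g \leq 2(k-1)$, Brill-Noether theory forces $V^k_{|H|,\delta}(S) = V_{|H|,\delta}(S)$, so the claim follows from Proposition \ref{prop:rightdim}(ii) combined with Lemma \ref{lem:defo}.

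For (ii), the plan is to use upper-semicontinuity of $\dim W^1_k$ along the family of stable models $\overline C_t$ obtained from the degeneration: for $t \neq 0$ one has $\overline C_t = \widetilde C_t$, smooth of genus $g$, while Lemma \ref{lemma:stablemodnorm} identifies $\overline C_0$ with the irreducible rational nodal curve obtained from $\Gamma_1 \cong \PP^1$ by identifying each of the $g$ distinguished pairs. In the admissible cover picture, $g^1_k$'s on $\overline C_0$ correspond exactly to $g^1_k$'s on $\PP^1$ whose divisors contain every distinguished pair, a locus whose dimension is computed to be $\max\{0,\rho(g,1,k)\}$ in the final assertion of Proposition \ref{prop:riempie}. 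Semicontinuity then bounds $\dim W^1_k(\widetilde C_t) \leq \max\{0,\rho\}$ for generic $t$; the reverse inequality follows from the Brill-Noether bound when $\rho \geq 0$ and from the nonemptiness already established in (i) when $\rho < 0$.

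For (iii) and (iv) I will argue that simple ramification and non-neutrality of nodes are both open conditions on pairs (curve, $g^1_k$), so it suffices to check them in the central fibre and conclude by semicontinuity. For (iii), a general element of any component of $W^1_k(\widetilde C)$ specializes to a general $g^1_k$ on $\Gamma_1 \cong \PP^1$ constrained to pass through the $g$ fixed distinguished pairs; by Lemma \ref{lemma:trans}(ii)-(iii) the associated curve $C_{\mathfrak g} \subset \Sym^2(\PP^1)$ then meets the diagonal $\Delta$ transversally in $2(k-1)$ distinct points, which via Riemann--Hurwitz forces simple ramification. For (iv), each of the $\delta$ nodes of $C_t$ specializes to a marked node on $\Gamma_1$ where a chain is attached, and the two branches, one on $\Gamma_1$ and one on the chain, must be shown to map to distinct points under the limit admissible cover; this reduces to the ramification points and the chain-intersection points of $C_{\mathfrak g}$ on the conics $\mathfrak c_j$ remaining in general position, which is again guaranteed by Lemma \ref{lemma:trans}(iii).

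The main obstacle will be making the semicontinuity and admissible-cover identifications fully rigorous when the limit curve $\overline C_0$ is nodal and reducible. In particular, for (iv) one must carefully work in (a suitable completion of) the moduli of admissible covers or the compactified relative Picard, and rule out the possibility that the relevant limit cover requires further semistable blowups at nodes of $\overline C_0$, which would alter the combinatorial picture of where the two branches of each marked node are mapped.
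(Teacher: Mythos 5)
Your overall strategy is the same as the paper's: Proposition~\ref{prop:riempie} plus Proposition~\ref{degargrel} for (i), the count of $g^1_k$'s at the end of Proposition~\ref{prop:riempie} plus semicontinuity for (ii), and a specialization-to-the-admissible-cover argument for (iii) and (iv). Your case split in (i) for $g\leq 2(k-1)$ is in fact slightly more careful than the paper, which applies Proposition~\ref{degargrel} directly.

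There is, however, a concrete gap in your argument for (iii). A $g^1_k$ on the smooth genus-$g$ curve $\widetilde C_t$ has ramification divisor of degree $2g+2k-2$. Your check in the central fibre --- that $C_{\mathfrak g}$ meets the diagonal $\Delta$ transversally in $2(k-1)$ distinct points --- only accounts for the $2k-2$ ramification points that limit to ramification points of $\mathfrak g$ on $\Gamma_1\cong\PP^1$. The remaining $2g$ ramification points of the $g^1_k$ on $\widetilde C_t$ necessarily collapse onto the $g$ nodes of the stable model $\overline C_0$ (the nodes created by identifying the distinguished pairs), and your argument says nothing about the behavior of the ramification there; openness of ``simple ramification'' lets you conclude only if the \emph{limit admissible cover} has simple ramification everywhere, including at these nodes. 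The paper closes this by observing that in the admissible cover each such node is replaced by a bridging $\PP^1$ joining the two branches and mapping $2:1$ onto a component of the target, so the two ramification points sitting over that node are automatically simple. Without this (or an equivalent local analysis), (iii) is not proved. Your treatment of (iv) is in the same spirit as the paper's (which is itself very terse on this point), and your closing paragraph correctly identifies that the whole argument must be run in a compactified space of admissible covers where these degenerations are controlled.
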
 

\begin{proof} By Proposition \ref {prop:riempie}, $V^k( \alpha_1,\ldots,\alpha_p)$ is not empty, of the expected dimension $\min\{2(k-1),g\}$. Then (i) follows by 
Proposition \ref{degargrel} (note that the  nodes on the singular locus of $R$  of the general curve in $V^k( \alpha_1,\ldots,\alpha_p)$ must smooth by well-known arguments). The properties stated at the end of Proposition \ref{prop:riempie} imply (ii). 

Let us prove  (iii). The stable model of a general curve in $V^k( \alpha_1,\ldots,\alpha_p)$ is the image of $\PP^1$ by a morphism identifying $g$ pairs of points in a general $\mathfrak g=g^ 1_k$ on $\PP^ 1$ (see 
Lemma \ref {lemma:kgonale}).  The limit ramification points of a general $g^ 1_k$ on $\widetilde{C}$ are 
 the ones of  $\mathfrak g$ plus the ones tending to the nodes of $\overline C$.  The former ramification is simple by the generality of $\mathfrak g$.  The latter is simple because, in the admissible cover setting,  each node is replaced
by a $\PP^ 1$ joining the two branches and mapping $2:1$ to a $\PP^ 1$, hence the ramification is simple there. 

Finally (iv) holds because the same happens for the $\delta$ marked nodes on a general limit curve in $V^k(\alpha_1,\ldots,\alpha_p)$ and the related admissible cover.  \end{proof}


\section{Conclusion of the proof of the main theorem} \label{sec:finalexistence}


In this section we finish the proof of  Theorem \ref{thm:main}. We can assume that $\delta \leq p-1$ and $p \geq 3$, as otherwise the result is trivial. 
The theorem follows first for the general polarized surface $(S,H) \in \K'_p$ by   Corollary \ref{rem:ram2} and  Proposition \ref{lemma:trovainteri} right below. As is well-known, the general $(S,H) \in \K'_p$ is a limit of a one-parameter family of general polarized surfaces in $\K_p$. Hence the theorem follows by applying Proposition \ref{degargrel} once more to this latter family.

\begin{proposition} \label{lemma:trovainteri}
  Let $p,k,\delta$ be integers satisfying $p \geq 3$, $k \geq 2$, 
$\delta \leq p-1$ and  \eqref{eq:boundintro}. 
Then there are integers $\alpha_{j}$ satisfying 
\eqref{eq:condex2}-\eqref{eq:condex3}  with 
$\delta(\alpha_1,\ldots,\alpha_p)=\delta$.
\end{proposition}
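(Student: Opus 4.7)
My plan is to construct an explicit tuple $(\alpha_j)$ satisfying the three required conditions. Set $g := p-\delta$ and write $g = 2(k-1)\alpha + r$ with $0\leq r < 2(k-1)$, so that $\alpha$ coincides with the floor appearing in \eqref{eq:boundintro}. In terms of the $\alpha_j$'s, the problem is to partition $p$ into $g$ positive parts, where each value is used at most $2(k-1)$ times.

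I will start from the ``packed'' configuration
\[ \alpha_j^{\min} = 2(k-1) \; \text{ for } \; 1\le j \le \alpha, \quad \alpha_{\alpha+1}^{\min} = r, \quad \alpha_j^{\min}=0 \; \text{ otherwise.} \]
This clearly satisfies $\sum_j \alpha_j^{\min} = g$ and the bound $\alpha_j^{\min} \le 2(k-1)$. A direct calculation shows $\sum_j j\alpha_j^{\min} = (\alpha+1)[(k-1)\alpha + r]$, and hence
\[ \sum_j j \alpha_j^{\min} - g = (k-1)\alpha(\alpha-1) + r\alpha. \]
Expanding $g=2(k-1)\alpha+r$ in the right-hand side of \eqref{eq:boundintro} shows that this number is exactly $\alpha(g-(k-1)(\alpha+1))$, so the hypothesis \eqref{eq:boundintro} is equivalent to $e := p - \sum_j j\alpha_j^{\min} \geq 0$, i.e., the target weight $p$ exceeds the weight of the packed configuration by exactly $e\geq 0$.

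If $e=0$, the packed configuration is already the desired partition. If $e>0$, I modify it by a single ``lift'': let $j_0$ be the topmost occupied index ($j_0=\alpha$ if $r=0$, $j_0=\alpha+1$ if $r>0$) and set
\[ \alpha_{j_0} := \alpha_{j_0}^{\min}-1, \qquad \alpha_{j_0+e} := 1, \]
keeping all other $\alpha_j = \alpha_j^{\min}$. Then $\sum_j \alpha_j = g$ still holds, and $\sum_j j\alpha_j = \sum_j j\alpha_j^{\min} + e = p$ by construction. The multiplicities remain in $[0,2(k-1)]$ because $\alpha_{j_0}^{\min}\ge 1$ (equal to $2(k-1)\geq 2$ when $r=0$, or to $r\geq 1$ when $r>0$), while position $j_0+e>j_0$ was empty in $\alpha^{\min}$, so its new multiplicity is $1$.

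The only non-trivial check is that $j_0+e\leq p$. Substituting the formulas for $j_0$, $e$, $p$ and simplifying, this inequality reduces (after dividing through by $\alpha$) to $(k-1)(\alpha+1)\geq 1$ in the case $\alpha\geq 1$, which is immediate from $k\geq 2$; in the boundary case $\alpha=0$ (which forces $r=g\geq 1$), the inequality becomes $1+e\leq p$, i.e., $\delta\leq p-1$, which is exactly one of our hypotheses. This finishes the construction. The argument is entirely elementary arithmetic; the only mildly delicate point is this final position-bound verification, and the use of the hypothesis $\delta\leq p-1$ is confined to the $\alpha=0$ case.
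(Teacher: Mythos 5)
Your construction is correct, and it takes a genuinely different route from the paper. The paper proceeds in two stages: it first computes the minimal admissible $\delta_0$ in closed form (Lemma \ref{lemma:deltaminimo}, via the representation $p=(k-1)m(m+1)+t(m+1)+\lambda$), exhibits explicit tuples $(\alpha_j)$ realizing $\delta_0$ in three cases according to the vanishing of $t$ and $\lambda$, and then climbs from $\delta_0$ to any $\delta'\le p-1$ by Lemma \ref{lemma:bastaquellominimale}, which merges two chains of lengths $2j_1-1$ and $2j_2-1$ into one of length $2(j_1+j_2)-1$, raising $\delta$ by one at each step. You instead give a single uniform construction for every admissible $\delta$: the packed configuration plus one lifted part, with \eqref{eq:boundintro} appearing transparently as the non-negativity of the excess $e=p-\sum_j j\alpha_j^{\min}$ (your identity $\sum_j j\alpha_j^{\min}-g=\alpha\bigl(g-(k-1)(\alpha+1)\bigr)$ is exactly the right reduction, and in effect re-proves the ``only if'' direction of realizability at the combinatorial level). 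Your argument is shorter and isolates precisely where the hypothesis $\delta\le p-1$ is used (the $\alpha=0$ case); the paper's two-step version has the side benefit of producing the explicit formula \eqref{eq:delta-algo} for $\delta_0$, which is reused later for the optimal classes, and a merge lemma of independent use. One cosmetic slip: in the subcase $r>0$, $\alpha\ge1$, the position bound $j_0+e\le p$ reduces after dividing by $\alpha+1$ (not $\alpha$) to $(k-1)\alpha+r\ge1$; this holds just as immediately, so nothing is affected.
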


To prove this, we need two auxiliary results. 

\begin{lemma} \label{lemma:bastaquellominimale}
  Assume there are integers $\alpha_{j}$ satisfying 
\eqref{eq:condex2}-\eqref{eq:condex3}  with 
$\delta_0=\delta(\alpha_1,\ldots,\alpha_p)$. Then, for any integer $\delta'$ satisfying
\[ \delta_0 \leq \delta' \leq p-1 \]
there are integers $\alpha'_{j}$ satisfying 
\eqref{eq:condex2}-\eqref{eq:condex3}  with 
$\delta(\alpha'_1,\ldots,\alpha'_p)=\delta'$.
\end{lemma}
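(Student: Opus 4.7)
The plan is to argue by induction on $\delta'-\delta_0\ge 0$. The base case $\delta'=\delta_0$ is the given tuple. For the inductive step it suffices to establish the following assertion: whenever $(\alpha_j)$ satisfies \eqref{eq:condex2}--\eqref{eq:condex3} with $\delta(\alpha_1,\ldots,\alpha_p)<p-1$, there exists an admissible tuple $(\alpha'_j)$ satisfying the same conditions and with $\delta(\alpha'_1,\ldots,\alpha'_p)=\delta(\alpha_1,\ldots,\alpha_p)+1$.

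The main tool is a \emph{merge operation}. Given indices $i\le j$ with $\alpha_i,\alpha_j\ge 1$ (requiring $\alpha_i\ge 2$ if $i=j$), $i+j\le p$, and $\alpha_{i+j}+1\le 2(k-1)$, one defines $(\alpha'_j)$ by
\[
\alpha'_i=\alpha_i-1,\qquad \alpha'_j=\alpha_j-1,\qquad \alpha'_{i+j}=\alpha_{i+j}+1,
\]
leaving all other entries unchanged. A direct check shows that $\sum_\ell \ell\alpha'_\ell=\sum_\ell \ell\alpha_\ell=p$, that $\delta(\alpha'_1,\ldots,\alpha'_p)=\delta(\alpha_1,\ldots,\alpha_p)+1$, and that all entries remain $\le 2(k-1)$. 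Hence the entire inductive step reduces to showing the existence of a valid merge whenever $g:=\sum_j\alpha_j\ge 2$, which is equivalent to $\delta(\alpha)<p-1$.

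To exhibit such a merge I would distinguish two cases. \emph{Case A: every $\alpha_j\le 1$.} Since $g\ge 2$, the support of $(\alpha_j)$ contains two distinct indices $m<m'$, and $m+m'\le \sum_\ell \ell\alpha_\ell=p$. Moreover $\alpha_{m+m'}\le 1$, and $k\ge 2$ yields $2(k-1)\ge 2$, so merging the parts $m$ and $m'$ is valid. \emph{Case B: some $\alpha_{j^*}\ge 2$.} Try merging two copies of $j^*$; since $\alpha_{j^*}j^*\le p$, automatically $2j^*\le p$. If this merge is blocked, then $\alpha_{2j^*}=2(k-1)\ge 2$, and one repeats the argument with $2j^*$ in place of $j^*$. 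This produces a doubling sequence $j_i:=2^i j^*$ of indices, each carrying multiplicity $\ge 2$, hence satisfying $2j_i\le p$ at every step. Since $j_i\to\infty$, the iteration must terminate with a successful merge after finitely many steps.

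The main obstacle is precisely this existence claim: the natural choice of a merge can be blocked when $\alpha_{i+j}$ already saturates the bound $2(k-1)$, and the doubling argument in Case~B is what defeats this obstruction. Once a valid merge is produced, the induction closes immediately, showing that every value $\delta_0\le \delta'\le p-1$ is attained by some admissible tuple.
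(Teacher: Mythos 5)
Your proof is correct and follows essentially the same strategy as the paper: induct on $\delta'$ and realize each unit increment of $\delta$ by merging two parts of the partition into their sum, which preserves $\sum_j j\alpha_j=p$ and raises $\delta$ by exactly one. The only difference is that the paper sidesteps your Case B doubling argument entirely by always merging the two \emph{largest} parts (counted with multiplicity): then the target index $j_1+j_2$ strictly exceeds every index in the support, so $\alpha_{j_1+j_2}=0$ beforehand and the cap $\alpha'_{j_1+j_2}=1\le 2(k-1)$ is automatic, with no saturation obstruction to defeat.
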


\begin{proof} Assume $\delta_0<p-1$.
It suffices to find integers $\alpha'_{j}$ satisfying 
\eqref{eq:condex2}-\eqref{eq:condex3}  with 
$\delta(\alpha'_1,\ldots,\alpha'_p)=\delta_0+1$. 
To do this, we pick the two largest indices $j_1,j_2$, possibly coinciding, for which  
$\alpha_{j_1}+\alpha_{j_2} \geq 2$. These indices do exist, otherwise 
$\alpha_{j_0}=1$ for one index $j_0$ and 
$\alpha_j =0$ for $j \neq j_0$. Then $p=j_0$ by \eqref{eq:condex2} and $\delta_0=j_0-1$ by \eqref{eq:numeronodi}, so that $\delta_0=p-1$, a contradiction.

If $j_1=j_2$, we define  
\[ \alpha'_{j}=  
\begin{cases}  
\alpha_j-2 &  \; \mbox{if} \; j=j_1, \\ 
1  & \;\mbox{if} \; j=2j_1, \\
\alpha_{j} & \;\mbox{otherwise}.
\end{cases} 
\]
If $j_1\neq j_2$, we define  
\[ \alpha'_{j}=  
\begin{cases}  
\alpha_j-1 &  \; \mbox{if} \; j=j_1 \;\mbox{or} \; j=j_2.  \\ 
1  & \;\mbox{if} \; j=j_1+j_2, \\
\alpha_{j} & \;\mbox{otherwise}.
\end{cases} 
\]
These integers satisfy the desired conditions.
\end{proof}

Define now
\begin{equation}
  \label{eq:defm}
 m= m(p,k):=\max\{n \in \ZZ \; | \; (k-1)n(n+1) \leq p\} 
\end{equation}
and
\begin{equation}
  \label{eq:deft}
t=t(p,k):= \max\{n \in \ZZ \; | \; (k-1)m(m+1)+n(m+1) \leq p\},  \;\text {i.e.}\; t= \Big\lfloor \frac{p}{m+1}\Big\rfloor-m(k-1).
\end{equation}
We have a unique representation
 \begin{equation} \label{eq:propp}
p = (k-1)m(m+1) + t(m+1) +\lambda.
\end{equation} 
Note that
\begin{equation} 
\label{eq:propl} 0 \leq t < 2(k-1) \; \;\mbox{and} \; \;
0  \leq  \lambda  \leq m. 
\end{equation}

\begin{lemma} \label{lemma:deltaminimo}
Given $p$ and $k$,  the minimal integer $\delta$ satisfying \eqref{eq:boundintro} is
 \begin{equation}
    \label{eq:delta-algo} 
    \delta_0=\delta_0(p,k)=(k-1)m(m-1)+tm+\lambda= \Big\lceil \frac{mp}{m+1}\Big\rceil-m(k-1).
    \end{equation}
\end{lemma}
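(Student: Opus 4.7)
The plan is to rewrite \eqref{eq:boundintro} in a form where the floor becomes an explicit Euclidean remainder, then compare with the combinatorial invariants $m,t,\lambda$ from \eqref{eq:propp}--\eqref{eq:propl}, and finally use monotonicity (Remark \ref{rem:noexist2}(b)) to reduce to a one-step check. Concretely, I would set $g := p-\delta$ and write $g = 2(k-1)\alpha + r$ with $0 \leq r < 2(k-1)$, which automatically realizes $\alpha = \lfloor g/(2(k-1))\rfloor$, so that \eqref{eq:boundintro} becomes
\[ \delta \geq \alpha\bigl((k-1)(\alpha-1) + r\bigr). \]
By Remark \ref{rem:noexist2}(b) the set of $\delta$ satisfying this is upward-closed, so it suffices to exhibit $\delta_0$ as the smallest valid value.

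First I plan to verify that $\delta_0$ works: using \eqref{eq:propp} directly, one has $g_0 := p - \delta_0 = 2(k-1)m + t$, and by \eqref{eq:propl} we have $0 \leq t < 2(k-1)$, so that $\alpha = m$ and $r = t$ at $\delta = \delta_0$. The rewritten inequality then becomes $\delta_0 \geq (k-1)m(m-1) + tm$, which holds with excess equal to $\lambda \geq 0$. Next, to show minimality, I would check that $\delta_0 - 1$ fails the inequality. Decreasing $\delta$ by one sends $g$ to $g_0 + 1 = 2(k-1)m + (t+1)$, and one must distinguish two subcases: either $t+1 < 2(k-1)$, giving $(\alpha,r) = (m, t+1)$, or $t+1 = 2(k-1)$, forcing $g_0 + 1 = 2(k-1)(m+1)$ and hence $(\alpha,r) = (m+1, 0)$. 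In both subcases a short rearrangement reduces the required inequality at $\delta_0 - 1$ to $\lambda \geq m+1$, which contradicts $\lambda \leq m$ from \eqml{eq:propl}. The only delicate point of the argument, and the step I would be most careful about, is this jump $\alpha\colon m \mapsto m+1$ at $t+1 = 2(k-1)$, together with the pleasant coincidence that both subcases produce the same impossible condition $\lambda \geq m+1$.

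Finally, the alternative form $\delta_0 = \lceil mp/(m+1)\rceil - m(k-1)$ is pure bookkeeping: expanding $mp/(m+1)$ via \eqref{eq:propp} gives $(k-1)m^2 + tm + m\lambda/(m+1)$, and since $0 \leq m\lambda/(m+1) < m$ one sees that $\lceil m\lambda/(m+1)\rceil$ equals $0$ when $\lambda=0$ and equals $\lambda$ when $1 \leq \lambda \leq m$ (because then $0 < \lambda/(m+1) < 1$). In both cases this ceiling is $\lambda$, yielding the claimed identity.
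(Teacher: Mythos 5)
Your argument is correct and is essentially the computation the paper has in mind (its printed proof is just ``straightforward computation using \eqref{eq:propl}''): one verifies that $\delta_0$ satisfies \eqref{eq:boundintro} with slack exactly $\lambda$, and that $\delta_0-1$ fails, splitting into the two subcases $t+1<2(k-1)$ and $t+1=2(k-1)$ according to whether $\alpha$ jumps from $m$ to $m+1$, both reducing to the impossible inequality $\lambda\ge m+1$. Your explicit appeal to Remark \ref{rem:noexist2}(b) for upward-closedness in $\delta$ and your check of the identity $\lceil m\lambda/(m+1)\rceil=\lambda$ are details the paper leaves implicit, and both are handled correctly.
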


\begin{proof}
Straightforward computation using \eqref{eq:propl}. 
\end{proof}

\renewcommand{\proofname}{Proof of Proposition \ref{lemma:trovainteri}}

\begin{proof}
 We will find integers $\alpha_{j}$ satisfying 
\eqref{eq:condex2}-\eqref{eq:condex3}  with 
$\delta(\alpha_1,\ldots,\alpha_p)=\delta_0$ as in Lemma \ref{lemma:deltaminimo}. The result then follows from Lemma \ref{lemma:bastaquellominimale}.
If $\lambda=0$, we let  
\[ \alpha_{j}=  
\begin{cases}  
2(k-1) & \;\mbox{for} \; j=1,\ldots,m; \\
t & \;\mbox{for} \; j=m+1; \\
0 & \;\mbox{for} \; j >m+1.
\end{cases} 
\]
If $t =0$ and $\lambda >0$, we let 
\[ \alpha_{j}=  
\begin{cases}  
2(k-1) & \;\mbox{for} \; j=1,\ldots,m-1;\\
2(k-1)-1 & \;\mbox{for} \; j=m;\\
1 & \;\mbox{for} \; j=m+\lambda;\\
0 & \;\mbox{otherwise}.
\end{cases} 
\]
Finally, if $t>0$ and $\lambda >0$, we let 
\[ \alpha_{j}=  
\begin{cases}  
2(k-1) & \;\mbox{for} \; j=1,\ldots,m;\\
t-1 & \;\mbox{for} \; j=m+1;\\
1 & \;\mbox{for} \; j=m+1+\lambda;\\
0 & \;\mbox{otherwise}.
\end{cases} 
\]
Then \eqref{eq:condex2}-\eqref{eq:condex3} are verified with
$\delta(\alpha_1,\ldots,\alpha_p)= \delta_0$.
\end{proof}

\renewcommand{\proofname}{Proof}

\section{The Mori cone of  punctual Hilbert schemes and related conjectures} \label{S:ratcur2}

Finally we go back to the topic of  \S\;\ref{S:ratcur} (from  which we keep the notation).
Let $(S,H) \in \K_p$. We denote by $R_{p,\delta,k}$ the  rational curves (or their classes) in $\Hilb^k(S)$ associated to the $g^1_k$  on the normalizations of curves in $V^k_{|H|,\delta}(S)$ satisfying \eqref{eq:condnod1} and \eqref{eq:condnod2}. Theorem \ref{thm:main} determines the triples $(p,k,\delta)$ for which the rational curves $R_{p,\delta,k}$ exist and, in these cases, yields the existence of an irreducible family  of such curves of  dimension exactly $2(k-1)$, which is the expected dimension of any family of rational curves on a hyperk{\"a}hler manifold of dimension $2k$ (cf. \cite[Cor.~5.1]{ran}). 
By Lemma \ref{classR}, the class of $R_{p,\delta,k}$ in 
$N_1(\Hilb^k(S))$ is $H-(p-\delta+k-1)\mathfrak{r}_k$. Given $p$ and $k$, the class $R_{p,\delta,k}$ that is the closest to the border of the Mori cone is the one corresponding to $\delta=\delta_0$ minimal satisfying \eqref{eq:boundintro}, given by \eqref {eq:delta-algo}. We call such a curve, or class, \emph{optimal}. Thus:

\begin{prop} \label{prop:optimalclass}
  For fixed $p$ and $k$, the optimal class is 
  \begin{equation}
    \label{eq:optimalclass}
   R_{p,\delta_0,k} \equiv  H-\Big( (m+1)(k-1)+ \Big\lfloor \frac{p}{m+1} \Big\rfloor\Big)\mathfrak{r}_k, 
  \end{equation}
where $m$ is as in \eqref{eq:defm}.
\end{prop}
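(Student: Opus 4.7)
The strategy is to read off the coefficient of $\mathfrak{r}_k$ directly from Lemma \ref{classR}, then substitute the explicit formula for $\delta_0$ provided by Lemma \ref{lemma:deltaminimo}, and finally massage the ceiling into the stated floor via the division algorithm. There is no geometric content to add beyond what has already been established: the existence of $R_{p,\delta_0,k}$ (together with the two technical conditions \eqref{eq:condnod1}, \eqref{eq:condnod2}) is given by Theorem \ref{thm:main}, the class formula by Lemma \ref{classR}, and the explicit value of $\delta_0$ by Lemma \ref{lemma:deltaminimo}.

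\begin{proof}
By Theorem \ref{thm:main} the rational curve $R_{p,\delta_0,k}$ exists, and by Lemma \ref{classR}
\[
R_{p,\delta_0,k} \equiv H - (p-\delta_0+k-1)\,\mathfrak{r}_k.
\]
By Lemma \ref{lemma:deltaminimo},
\[
\delta_0 \;=\; \Big\lceil \tfrac{mp}{m+1}\Big\rceil - m(k-1),
\]
so the coefficient of $\mathfrak{r}_k$ equals
\[
p-\delta_0+k-1 \;=\; (m+1)(k-1) + \Big(p - \Big\lceil \tfrac{mp}{m+1}\Big\rceil\Big).
\]
It remains to verify that
\begin{equation}\label{eq:myclaim}
p - \Big\lceil \tfrac{mp}{m+1}\Big\rceil \;=\; \Big\lfloor \tfrac{p}{m+1}\Big\rfloor.
\end{equation}
Write $p = (m+1)q + r$ with $q=\lfloor p/(m+1)\rfloor$ and $0\le r\le m$. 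Then
\[
\tfrac{mp}{m+1} \;=\; mq + \tfrac{mr}{m+1} \;=\; mq + r - \tfrac{r}{m+1}.
\]
If $r=0$ the right-hand side is the integer $mq$, while if $1\le r\le m$ we have $0<r/(m+1)<1$, so $\lceil mp/(m+1)\rceil = mq+r$ in both cases. Therefore
\[
p - \Big\lceil \tfrac{mp}{m+1}\Big\rceil \;=\; (m+1)q + r - (mq+r) \;=\; q \;=\; \Big\lfloor \tfrac{p}{m+1}\Big\rfloor,
\]
proving \eqref{eq:myclaim}, and hence \eqref{eq:optimalclass}.
\end{proof}

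The computation is elementary; the only step requiring care is \eqref{eq:myclaim}, where one must distinguish the case $r=0$ (in which $mp/(m+1)$ is already an integer) from $1\le r\le m$ (in which the fractional part $r/(m+1)$ lies strictly between $0$ and $1$, so the ceiling bumps $mq+r-r/(m+1)$ up to $mq+r$). No genuine obstacle is present since all the nontrivial ingredients — existence, the form of the class, and the minimal $\delta_0$ — are already in hand.
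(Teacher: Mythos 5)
Your proof is correct and follows the same route the paper intends: the proposition is stated as an immediate consequence ("Thus:") of Lemma \ref{classR} and the formula \eqref{eq:delta-algo} for $\delta_0$, which is exactly what you substitute. The only arithmetic left implicit in the paper is the identity $p-\lceil mp/(m+1)\rceil=\lfloor p/(m+1)\rfloor$ (equivalently, one could use the representation $p=(k-1)m(m+1)+t(m+1)+\lambda$ to get the coefficient $(2m+1)(k-1)+t$ directly), and your case analysis on the remainder $r$ verifies it correctly.
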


\begin{remark} \label{rem:optimal}
  As $\rho(p,l,kl+\delta-1)=\rho(p,l,kl+\delta)-l-1$ for any positive integer $l$, the curve $R_{p,\delta,k}$
is optimal if and only if $\rho\leq \alpha$, where $\rho$ is as in Remark \ref{rem:noexist} and $\alpha$ as in \eqref{eq:boundintro}. This follows from
Theorem \ref{thm:main}.
\end{remark}

A result by  Huybrechts \cite[Prop. 3.2]{H2} (resp. Boucksom \cite{Bou}) says that a divisor 
$D$ on a hyperk{\"a}hler manifold $X$ is nef (resp. ample)
if and only if $q(D)\ge 0$ and $D \cdot R \geq 0$ (resp. $q(D)> 0$ and $D \cdot R > 0$) for any (possibly
singular) {rational} curve
$R\subset X$. Theorem \ref{thm:main} allows a small step towards the determination of the ample (or nef) cone:

\begin{prop} \label{prop:conoampio}
 Let $(S,H)\in \mathcal K_p$. If the $\mathbb{Q}$-divisor $D=H-t\mathfrak{e}_k$ in $\Hilb^k(S)$ is ample (resp. nef), then 
  \begin{equation}
   \label{eq:conoampio}
 0 < t <  \tau(p,k):= \frac{2(p-1)}{(m+1)(k-1)+ \Big\lfloor \frac{p}{m+1} \Big\rfloor} \; \; \mbox{(resp. \; $0 \le  t \le  \tau(p,k)$ )},  
 \end{equation}
where $m$ is as in \eqref{eq:defm}.
\end{prop}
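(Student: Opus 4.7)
The proof is an application of the Huybrechts--Boucksom characterization of nef/ample divisors on hyperkähler manifolds, recalled just before the statement: $D$ is nef (resp.\ ample) iff $q(D) \ge 0$ (resp.\ $q(D) > 0$) and $D \cdot R \ge 0$ (resp.\ $D \cdot R > 0$) for every rational curve $R \subset \Hilb^k(S)$. The bounds on $t$ will come from testing $D = H - t\mathfrak{e}_k$ against two specific rational curves chosen to extract the lower and upper bound separately.

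For the lower bound, I would take $R$ to be a fiber of the Hilbert--Chow morphism $\mu_k$, which is a rational curve of class $\mathfrak{r}_k$ present on every $\Hilb^k(S)$. Using the pairings $H \cdot \mathfrak{r}_k = 0$ (from the orthogonal decomposition \eqref{eq:cacca}) and $\mathfrak{e}_k \cdot \mathfrak{r}_k = -1$, one directly computes $D \cdot \mathfrak{r}_k = t$, forcing $t \ge 0$ in the nef case and $t > 0$ in the ample case.

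For the upper bound, I would use the optimal curve $R_{p,\delta_0,k}$ produced by Theorem \ref{thm:main} together with Proposition \ref{prop:optimalclass}: it is an irreducible rational curve of class $H - y\mathfrak{r}_k$ with $y = (m+1)(k-1) + \lfloor p/(m+1)\rfloor$. Combining $H^2 = 2(p-1)$, $H \cdot \mathfrak{r}_k = 0$, $\mathfrak{e}_k \cdot H = 0$ (because a generic representative of the class $H$ in $H_2(\Hilb^k(S))$ is disjoint from $\Delta_k$), and $\mathfrak{e}_k \cdot \mathfrak{r}_k = -1$, one obtains
\[
D \cdot R_{p,\delta_0,k} \;=\; 2(p-1) - ty.
\]
Nefness (resp.\ ampleness) then yields $2(p-1) - ty \ge 0$ (resp.\ $>0$), i.e.\ $t \le \tau(p,k)$ (resp.\ $t < \tau(p,k)$), which, together with the previous paragraph, gives \eqref{eq:conoampio}.

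The only subtle point is that Theorem \ref{thm:main} provides $R_{p,\delta_0,k}$ a priori only on the general $(S,H) \in \K_p$, whereas the proposition is stated for an arbitrary $(S,H) \in \K_p$. This I would resolve by a standard specialization argument: pick a one-parameter degeneration whose general fiber is general in $\K_p$ and whose special fiber is the given $(S,H)$; the rational curves produced on the general fibers specialize, by properness, to a $1$-cycle of the same numerical class $H - y\mathfrak{r}_k$ on $\Hilb^k(S)$ whose components $R_i$ are rational. Each $R_i$ satisfies $D \cdot R_i \ge 0$ (resp.\ $>0$) by Huybrechts--Boucksom, and summing delivers the required inequality for the limit class. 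I expect no serious obstacle beyond this harmless bookkeeping, since all the deep input — the very existence of the optimal rational class — has already been established in Theorem \ref{thm:main}.
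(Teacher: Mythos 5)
Your proposal is correct and follows essentially the same route as the paper: the lower bound from $D\cdot\mathfrak{r}_k=t$, the upper bound from pairing $D$ with the effective optimal class $R_{p,\delta_0,k}\equiv H-\bigl((m+1)(k-1)+\lfloor p/(m+1)\rfloor\bigr)\mathfrak{r}_k$ supplied by Theorem \ref{thm:main} and Proposition \ref{prop:optimalclass}. The paper simply says ``we may assume $(S,H)$ is general'' where you spell out the specialization argument, and it leaves the intersection computation $D\cdot R_{p,\delta_0,k}=2(p-1)-ty$ implicit; both of your elaborations are accurate.
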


\begin{proof}
 We may assume that $(S,H)$ is general. By Theorem \ref{thm:main}, the optimal class $R_{p,\delta_0,k}$ as in \eqref{eq:optimalclass} is effective, hence $D \cdot R_{p,\delta_0,k} >0$ (resp. $\geq 0$), which is equivalent to the right hand inequality in \eqref{eq:conoampio}. The other inequality follows from $D \cdot \mathfrak{r}_k >0$ (resp. $\geq 0$). 
\end{proof}

Since $N_1(\Hilb^k(S))$ has rank two when $\Pic S \cong \ZZ[H]$, it is natural to pose the following:

\begin{question} \label{cong2}
Let $(S,H)\in \mathcal K_p$ be general, with $\Pic S \cong \ZZ[H]$ and $p \geq 2$.
Are the extremal rays of 
the Mori cone of $\Hilb^k(S)$ generated by $\mathfrak{r}_k$ and by the optimal class $R_{p,\delta_0,k}$ (cf. \eqref{eq:optimalclass})?
\end{question}

An affirmative answer to this question would yield that the bound
\eqref{eq:conoampio} is optimal. We will see below (Corollary
\ref{cor:intminima}, Propositions \ref{prop:BM} and \ref{prop:intzero2}) that
the question has an affirmative answer for
infinitely many pairs $(p,k)$. However, the question does not have a
positive answer for all pairs $(p,k)$. Indeed,
as communicated to us by Hassett, one can show that the rational
curves in \cite[Exmpl.~7.7]{HT}, where $p=8$ and $k=2$, have class
$3H-16\mathfrak{r}_k$, whereas the optimal class is
$H-5\mathfrak{r}_k$. Another example with $k=2$ yielding a negative
answer to Question \ref{cong2} was provided by Bayer and Macr{\`i} in
\cite[Exmpl.~13.4]{BM-MMP} after the appearance of the first version of
this paper on the web. In fact, in their work, Bayer and Macr{\`i}
determine the generators of the Mori cone of $\Hilb^k(S)$, up to
solving some Pell's equations. A substantial step towards determining the
Mori cone by our approach would be to extend our analysis to the
nonprimitive case, i.e. to curves in $|nH|$ for $n>1$. The methods of
our paper should in principle  work to treat this more general
case, although the extension is nontrivial. We plan to return to this
in future research.

\subsection{Conjectures of Hassett and Tschinkel}\label{ssec:HT}   Hassett and Tschinkel conjecture in  \cite[Conj. 1.2]{HTint}  that, for any polarized variety $(X,\mathfrak{g})$ deformation equivalent to $\Hilb^k(S)$ with  $S$ a $K3$ surface,  a $1$-cycle $R$ is effective if and only if $R \cdot \mathfrak{g} >0$ and $q(R) \geq -(k+3)/2$.  The ``only if'' part was proved for $k=2$ in \cite{HT2}, and then
for all $k \geq 2$ by Bayer and Macr{\`i} in \cite[Prop.~12.6]{BM-MMP}
after the submission of this paper. Hassett and
Tschinkel also conjecture in
\cite[Thesis 1.1]{HTint} that $-(k+3)/2$ is the self-intersection of the lines in any $\PP^k \subset X$. This latter conjecture has been verified  for $k=2$ in \cite{HT2}, $k=3$ in \cite{HHT} and $k=4$ in \cite{BJ}.  Other self-intersections have different geometrical properties from the point of view of birational geometry: primitive generators $R$ of extremal rays on a hyperk\"ahler manifold such that the associated extremal contraction is divisorial must satisfy
$-2 \leq q(R) <0$ by \cite[Thm.~2.1]{HTint}.

As noted in \S~\ref{S:nonex}, our Corollary \ref {cor:bound} proves
(and gives a Brill-Noether theoretical interpretation of) the inequality
$q(R) \geq -\frac{k+3}{2}$ for any rational curve arising from a
$g^1_k$ on the normalization of a nodal curve in $|H|$ satisfying
\eqref{eq:condnod1} and \eqref{eq:condnod2}, where $(S,H)\in \mathcal K_p$ is such that $|H|$ contains no reducible curves. (In fact, the conditions \eqref{eq:condnod1} and \eqref{eq:condnod2} can be skipped, by Remark \ref{rem:canskip}.)
For the curves $R_{p,\delta,k}$ obtained from Theorem \ref{thm:main}, we have
by Corollary \ref{cor:bound},
\begin{equation} \label{eq:selfint}
q(R_{p,\delta,k})=2(p-1) -\frac{(p-\delta+k-1)^2}{2(k-1)}=2(\rho-1)-\frac{\beta^2}{2(k-1)},
\end{equation}
with $\rho$ as in Remark \ref{rem:noexist} and  $\alpha$ as in \eqref{eq:boundintro}. (Recall that $\rho \geq 0$ and $-(k-1) < \beta \leq k-1$.) We can deduce the existence of the curves with lowest self-intersection in 
Hassett-Tschinkel's conjecture:

\begin{prop} \label{prop:intminima}
 Let $(S,H)\in \mathcal K_p$ be general and $R_{p,\delta,k}$ as above.
Then $q(R_{p,\delta,k})=-\frac{k+3}{2}$ if and only if $p=s(s+1)(k-1)$ for an integer $s \geq 1$ and $R_{p,\delta,k}$ is optimal. 
\end{prop}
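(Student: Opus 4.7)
The plan is to extract the equality case directly from the chain of inequalities leading to Corollary \ref{cor:bound}, using the expression \eqref{eq:selfint} for the Beauville--Bogomolov self-intersection. Write
\[ q(R_{p,\delta,k}) = 2(\rho-1) - \frac{\beta^{2}}{2(k-1)}, \]
with $\rho \geq 0$ and $-(k-1)<\beta\leq k-1$ as in Remark \ref{rem:noexist}. Since $\beta^{2}\leq (k-1)^{2}$, we have the bound $q(R_{p,\delta,k})\geq -2 - \tfrac{k-1}{2} = -\tfrac{k+3}{2}$, and equality forces simultaneously $\rho=0$ and $\beta^{2}=(k-1)^{2}$. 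The strict lower bound $\beta>-(k-1)$ leaves only $\beta=k-1$.

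From $\beta=(k-1)(2\alpha+1)-g=k-1$ I would read off $g=2\alpha(k-1)$. Substituting $\beta=k-1$ and $\rho=0$ in \eqref{eq:del} gives after simplification
\[ \delta \;=\; \frac{(g-k+1)^{2}-(k-1)^{2}}{4(k-1)} \;=\; (k-1)\alpha(\alpha-1), \]
so that $p=g+\delta=\alpha(\alpha+1)(k-1)$. Setting $s=\alpha$ (with $s\geq 1$ forced by $p\geq 3$) yields $p=s(s+1)(k-1)$. Optimality then comes for free from Remark \ref{rem:optimal}: since $\rho=0\leq \alpha=s$, the class $R_{p,\delta,k}$ is optimal.

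For the converse direction I would start with $p=s(s+1)(k-1)$ and compute $m,t,\lambda$ from \eqref{eq:defm}--\eqref{eq:propp}: the inequality $(k-1)s(s+1)\leq p<(k-1)(s+1)(s+2)$ gives $m=s$, and then $t=\lfloor p/(s+1)\rfloor-s(k-1)=0$, $\lambda=0$. By Lemma \ref{lemma:deltaminimo} this forces the optimal $\delta_{0}=(k-1)s(s-1)$, so $g=p-\delta_{0}=2s(k-1)$, and a direct substitution in \eqref{eq:selfint} shows
\[ q(R_{p,\delta_{0},k}) \;=\; (k-1)\cdot\frac{4s(s+1)-(2s+1)^{2}}{2}-2 \;=\; -\frac{k-1}{2}-2 \;=\; -\frac{k+3}{2}. \]

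There is no real obstacle: the argument is essentially an exercise in reading off the equality case of Corollary \ref{cor:bound}, with the only care needed being in handling the floor in the definition of $\alpha$ and in checking that the values of $(g,\delta)$ obtained from $\rho=0$, $\beta=k-1$ really correspond to those prescribed by Lemma \ref{lemma:deltaminimo}, which amounts to verifying $m=s$, $t=\lambda=0$ as above.
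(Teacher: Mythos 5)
Your proposal is correct and follows essentially the same route as the paper: read off the equality case $\rho=0$, $\beta=k-1$ from \eqref{eq:selfint}, deduce $g=2\alpha(k-1)$ and $p=\alpha(\alpha+1)(k-1)$ with optimality from Remark \ref{rem:optimal}, and verify the converse by computing $\delta_0$ and substituting. The paper leaves the computations as "straightforward"; you have simply carried them out (correctly), including the explicit determination of $m=s$, $t=\lambda=0$ in Lemma \ref{lemma:deltaminimo} for the converse.
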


\begin{proof}
  By \eqref{eq:selfint}, we have $q(R_{p,\delta,k})=-\frac{k+3}{2}$ if and only if
$\rho=0$ and $\beta=k-1$, in which case $R_{p,\delta,k}$ is optimal. A straightforward computation yields $g=2\alpha(k-1)$ and $p=\alpha(\alpha+1)(k-1)$.
Conversely, if $p$ is of the given form, 
one checks that $\delta=p-2s(k-1)$ verifies \eqref {eq:boundintro} and $q(R_{p,\delta,k})=-\frac{k+3}{2}$. 
\end{proof}

 If $k=2$, the self-intersection $-5/2$ is obtained if and only if $p=s(s+1)$ for an integer $s \geq 1$, which are  the genera covered in \cite[Prop.~7.2]{fkp2}, where rational curves with self-intersections $-5/2$ in $\Hilb^2(S)$ covering a $\PP^2$ were obtained  by  different methods.

Since $N_1(\Hilb^k(S))$ has rank two when $\Pic S \cong \ZZ[H]$, we have, as a consequence of the recent
result \cite[Prop.~12.6]{BM-MMP}:

\begin{cor} \label{cor:intminima}
Let $(S,H) \in \K_p$ be general such that $\Pic S \cong \ZZ[H]$ and $p=s(s+1)(k-1)$ for an
integer $s \geq 1$. Then the extremal rays of
the Mori cone of $\Hilb^k(S)$ are generated by $\mathfrak{r}_k$ and by
the optimal
class $R_{p,p-2s(k-1),k} \equiv H-(2s+1)(k-1)\mathfrak{r}_k$.
\end{cor}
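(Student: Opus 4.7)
The plan is to use the rank-two structure of $N_1(\Hilb^k(S))$. Since $\Pic(S)\cong\ZZ[H]$, the decomposition \eqref{eq:cacca} gives $N_1(\Hilb^k(S))\cong \ZZ[H]\oplus\ZZ[\mathfrak{r}_k]$, so $\overline{\NE}(\Hilb^k(S))$ is a closed convex cone in a real $2$-dimensional vector space, hence has exactly two extremal rays. One is generated by $\mathfrak{r}_k$, the class of a fiber of the Hilbert--Chow contraction $\mu_k$. It remains to prove that the other extremal ray is spanned by the optimal class $R_0 := R_{p,p-2s(k-1),k} \equiv H-(2s+1)(k-1)\mathfrak{r}_k$.

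First I would note that $R_0$ is effective by Theorem \ref{thm:main} (the choice $\delta_0 = p - 2s(k-1)$ satisfies \eqref{eq:boundintro}, as in Lemma \ref{lemma:deltaminimo}) and that $q(R_0) = -\frac{k+3}{2}$ by Proposition \ref{prop:intminima}. Next, let $R^* = x H + y \mathfrak{r}_k$ be a primitive integral generator of the other extremal ray; pairing with the nef class $H$ forces $x\ge 0$, and independence from $\mathfrak{r}_k$ gives $x\geq 1$. The essential input is the Bayer--Macr\`i self-intersection bound \cite[Prop.~12.6]{BM-MMP}, which yields $q(R^*) \geq -\frac{k+3}{2}$, equivalently
\[ y^2\leq 4(p-1)(k-1)x^2 + (k+3)(k-1). \]

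The decisive step is to compare this with the constraint coming from $R_0$. Since $R_0\in\overline{\NE}(\Hilb^k(S))$ and the Mori cone is spanned by $\mathfrak{r}_k$ and $R^*$, one may write $R_0 = a\mathfrak{r}_k + bR^*$ with $a,b\geq 0$; matching the $H$- and $\mathfrak{r}_k$-coefficients yields $b=1/x$ and $a = -(2s+1)(k-1) - y/x\geq 0$, so that $y\leq -(2s+1)(k-1)x$ and hence $y^2\geq (2s+1)^2(k-1)^2 x^2$. Combining with the previous inequality and using the identity
\[ (2s+1)^2(k-1) - 4(p-1) = k+3, \]
which holds precisely because $p=s(s+1)(k-1)$, gives $(k+3)x^2\leq k+3$, forcing $x=1$. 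The two inequalities then collapse to equalities, so $y = -(2s+1)(k-1)$ and $R^* = R_0$.

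The only nontrivial external ingredient is the Bayer--Macr\`i bound, which was proved after this paper was submitted; everything else is an essentially formal matching of this bound with the Brill--Noether equality $q(R_0) = -\frac{k+3}{2}$ furnished by Proposition \ref{prop:intminima}. The ``main obstacle'' is really just the combinatorial verification of the arithmetic identity $(2s+1)^2(k-1) = 4(p-1) + (k+3)$, which makes the two quadratic constraints exactly balance at $x=1$ and is what singles out the genera $p = s(s+1)(k-1)$.
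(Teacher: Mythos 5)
Your proposal is correct and is essentially the proof the paper leaves implicit: the corollary is stated there as an immediate consequence of the rank-two structure of $N_1(\Hilb^k(S))$, the Bayer--Macr\`i bound $q \geq -\frac{k+3}{2}$ for effective classes, and Proposition \ref{prop:intminima}, which is exactly the combination you spell out. The one refinement worth making is to apply the Bayer--Macr\`i bound to \emph{every} integral effective class $xH+y\mathfrak{r}_k$ (each of which then has slope $|y|/x \leq (2s+1)(k-1)$ by your arithmetic identity $(2s+1)^2(k-1)=4(p-1)+k+3$, with the maximum attained only at $x=1$), rather than to a putative primitive generator $R^*$ of the second extremal ray, whose effectivity is not automatic; this shows directly that $\overline{\NE}(\Hilb^k(S))$ is contained in, hence equal to, the cone spanned by $\mathfrak{r}_k$ and the effective class $R_{p,p-2s(k-1),k}$.
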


After the appearance of a first version of this paper on the web, Bayer and Macr{\`i} \cite[Prop.~9.3]{BM} proved that Question
\ref{cong2} also has an affirmative answer in the ``simplest'' cases $p \leq 2(k-1)$, that is, when $\delta_0=0$. More precisely, in our notation:

\begin{proposition} \label{prop:BM}
{\rm (Bayer and Macr{\`i})} Let $(S,H) \in \K_p$ be general and $k \geq 2$ be an integer such that $p \leq 2(k-1)$. Then the extremal rays of 
the Mori cone of $\Hilb^k(S)$ are generated by $\mathfrak{r}_k$ and by  the optimal 
class $R_{p,0,k} \equiv H-(p+k-1)\mathfrak{r}_k$.
\end{proposition}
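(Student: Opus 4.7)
The plan is to combine the rank-two structure of $N_1(\Hilb^k(S))$ with Bayer-Macr\`i's theory of Bridgeland stability conditions on the $K3$ surface $S$.

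Since $\Pic(S) \cong \ZZ[H]$, the discussion following \eqref{eq:cacca} gives $N_1(\Hilb^k(S)) = \ZZ[H] \oplus \ZZ[\mathfrak{r}_k]$, so the Mori cone has exactly two extremal rays. One is always generated by the Hilbert-Chow fiber class $\mathfrak{r}_k$. The hypothesis $p \leq 2(k-1)$ corresponds to $\delta_0 = 0$, and Theorem \ref{thm:main} produces an irreducible rational curve in the class $R_{p,0,k} \equiv H - (p+k-1)\mathfrak{r}_k$; hence it suffices to show that $R_{p,0,k}$ spans the second extremal ray.

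My approach is to produce a nef $\mathbb{Q}$-divisor $D$ with $D \cdot R_{p,0,k} = 0$ and $D \cdot \mathfrak{r}_k > 0$; since the Mori and nef cones are dual and have rank two, this forces $R_{p,0,k}$ to generate an extremal ray. A short computation using the intersection formulas in \S\;\ref{S:ratcur} determines the candidate divisor, up to positive scaling, as $D = (p+k-1)H - 2(p-1)\mathfrak{e}_k$, and one checks
\[
q(D) = 2(p-1)\bigl[(p-k+1)^2 + 4(k-1)\bigr] > 0, \qquad D \cdot H = (p+k-1)(2p-2) > 0,
\]
so $D$ lies in the component of the positive cone containing the ample classes. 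By the Huybrechts-Boucksom characterization of the nef cone on a hyperk\"ahler manifold recalled in \S\;\ref{ssec:HT}, $D$ is nef if and only if $D \cdot C \geq 0$ for every rational curve $C \subset \Hilb^k(S)$, so the task reduces to classifying the extremal rays of the Mori cone.

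This is achieved via Bayer-Macr\`i's wall-and-chamber decomposition of the space of Bridgeland stability conditions on $S$: the manifold $\Hilb^k(S)$ is identified with the moduli space $M_\sigma(v)$ of $\sigma$-stable objects of Mukai vector $v=(1,0,1-k)$ for $\sigma$ in the Gieseker chamber, and each wall bounding this chamber maps under the Bayer-Macr\`i isomorphism to an extremal ray of the Mori cone of some birational model of $\Hilb^k(S)$. Such walls are governed by primitive spherical or isotropic Mukai vectors $w = (r,cH,s) \in v^\perp$ satisfying Pell-type numerical inequalities, and the associated curve class in $N_1(\Hilb^k(S))$ is explicitly computable from $w$. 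The main obstacle is the finite case analysis on admissible triples $(r,c,s)$: one must verify that in the regime $p \leq 2(k-1)$ no Mukai vector $w$ yields a wall strictly between the Gieseker chamber and the one producing $R_{p,0,k}$. I expect the bound $p \leq 2(k-1)$ to preclude intermediate walls precisely because the numerical constraint forces any admissible $w$ into a specific spherical class whose Bayer-Macr\`i image is a positive multiple of $R_{p,0,k}$, completing the proof.
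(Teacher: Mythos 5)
The paper does not prove this proposition at all: it is imported verbatim from Bayer--Macr\`i \cite[Prop.~9.3]{BM}, so there is no internal argument to compare yours against. Judged on its own terms, your proposal has the right skeleton --- rank two of $N_1(\Hilb^k(S))$ for general $(S,H)$, existence of an irreducible rational curve in the class $H-(p+k-1)\mathfrak{r}_k$ from Theorem \ref{thm:main} with $\delta_0=0$, and reduction to exhibiting a nef class $D$ with $D\cdot R_{p,0,k}=0$ --- and your computations of $q(D)$ and $D\cdot R_{p,0,k}$ are correct. This is also the right general strategy: Bayer--Macr\`i do work with $\Hilb^k(S)\cong M_\sigma(1,0,1-k)$ and a wall-and-chamber decomposition of the stability manifold.

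The gap is that the entire content of the proposition is concentrated in the one step you do not carry out. Huybrechts' criterion reduces nefness of $D$ to $D\cdot C\ge 0$ for all rational curves $C$, which in rank two is exactly the assertion that no effective curve class lies beyond $R_{p,0,k}$ --- i.e.\ the statement to be proved. Breaking this circularity requires actually classifying the walls for the Mukai vector $(1,0,1-k)$ adjacent to the Gieseker chamber and checking that, for $p\le 2(k-1)$, the nearest wall corresponds to the class $R_{p,0,k}$; you state only that you ``expect'' the numerical constraints to force this. Note also that the softer tools available in the paper do not close this gap: the Hassett--Tschinkel-type bound $q(R)\ge -(k+3)/2$ pins down the extremal ray only when $q(R_{p,0,k})$ attains $-(k+3)/2$, which under $p\le 2(k-1)$ happens only for $p=2(k-1)$ (cf.\ Proposition \ref{prop:intminima}), and the Sawon--Markushevich argument of Proposition \ref{prop:intzero2} requires an isotropic class, i.e.\ $(p-1)(k-1)$ a square. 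So for general $p\le 2(k-1)$ the wall computation is unavoidable, and without it your proposal is a plan rather than a proof.
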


In 
 \cite[Rem.~9.4]{BM} one also proves that the class $\overline{R}:=R_{p,0,k}-\mathfrak{r}_k$ has positive intersection with an ample class and $q(\overline{R}) \geq -\frac{k+3}{2}$ for $k >>0$, proving that the Mori cone is strictly smaller than predicted by Hassett and Tschinkel in \cite[Conj. 1.2]{HTint}. In Remark \ref{rem:HT} below we provide one more series of cases where the same phenomenon occurs. This shows that the ``if'' part of their conjecture needs some modification concerning the minimal Beauville-Bogomolov self-intersection of the generators. 

For fixed $k$, the possible values of $q(R_{p,\delta,k})$ are a set of rational numbers of the form on the right hand side of \eqref{eq:selfint}. In the optimal case we find that $\beta=k-1-t$ and $\rho=\lambda$, where $t$ and $\lambda$ are as in \eqref{eq:deft} and \eqref{eq:propp}, that is
\begin{equation}
  \label{eq:intopt}
  q(R_{p,\delta_0,k}) = 2(\lambda-1)-\frac{(k-1-t)^2}{2(k-1)}.
\end{equation}
Choosing suitable $\lambda$ and $t$ and varying  $m \in \ZZ^+$ in \eqref{eq:propp} such that $\lambda \leq m$, one has that all rational numbers as on the right hand side of \eqref{eq:selfint}  are attained by Beauville-Bogomolov self--intersections  of optimal curves:

\begin{prop} \label{prop:tutteleint}
  Fix any integer $k\geq 2$. For any pair of integers $(\rho,\beta)$ with
$\rho \geq 0$ and $0 \leq \beta \leq k-1$, there are infinitely many positive integers $p$ such that $\Hilb^k(S)$ for general $(S,H) \in \K_p$ contains an optimal  curve $R_{p,\delta_0,k}$ 
with $q(R_{p,\delta_0,k})=2(\rho-1) - \frac{\beta^2}{2(k-1)}$. 
\end{prop}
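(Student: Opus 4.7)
The plan is to exploit the explicit formula \eqref{eq:intopt},
\[ q(R_{p,\delta_0,k}) = 2(\lambda-1) - \frac{(k-1-t)^2}{2(k-1)}, \]
and, for each allowed pair $(\rho,\beta)$, construct an infinite sequence of values of $p$ whose canonical decomposition \eqref{eq:propp} has $\lambda=\rho$ and $t=k-1-\beta$. The choice $t=k-1-\beta$ is legitimate because the assumption $0\leq \beta\leq k-1$ forces $0\leq t\leq k-1 < 2(k-1)$, which is exactly the range required by \eqref{eq:propl}.

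Concretely, for each integer $m\geq \max\{\rho,1\}$ I would set
\[ p_m := (k-1)m(m+1) + (k-1-\beta)(m+1) + \rho \]
and verify that the triple $(m,\,k-1-\beta,\,\rho)$ coincides with the triple $(m(p_m,k),\,t(p_m,k),\,\lambda)$ attached to $p_m$ by \eqref{eq:defm}--\eqref{eq:propp}. The estimate $(k-1)m(m+1)\leq p_m$ is immediate, while $(k-1)(m+1)(m+2)>p_m$ reduces to $(k-1+\beta)(m+1)>\rho$, which holds since $k-1+\beta\geq 1$ and $m+1>\rho$; this identifies $m$ with $m(p_m,k)$. Since $0\leq \rho<m+1$, dividing $p_m$ by $m+1$ yields $\lfloor p_m/(m+1)\rfloor=(k-1)m+(k-1-\beta)$, so $t(p_m,k)=k-1-\beta$ by \eqref{eq:deft}. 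Finally the residue $\lambda=p_m-(k-1)m(m+1)-(k-1-\beta)(m+1)=\rho$ satisfies $0\leq \lambda\leq m$ by the choice of $m$.

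With these identifications, substituting $\lambda=\rho$ and $k-1-t=\beta$ into \eqref{eq:intopt} produces exactly $q(R_{p_m,\delta_0,k})=2(\rho-1) - \frac{\beta^2}{2(k-1)}$. For $m$ sufficiently large one has $p_m\geq 3$, so Theorem \ref{thm:main} applies to general $(S,H)\in \K_{p_m}$ and guarantees the existence of the optimal curve $R_{p_m,\delta_0,k}$ on $\Hilb^k(S)$; its class and hence its Beauville--Bogomolov self--intersection are already computed in Lemma \ref{classR} and \eqref{eq:intopt}. Letting $m$ range over all sufficiently large integers then yields the desired infinite family of $p$'s. There is no serious obstacle here: the argument is a direct verification, and the only slightly delicate point is confirming that the $m$ we prescribe really is the maximum appearing in the definition \eqref{eq:defm}.
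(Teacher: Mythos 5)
Your proposal is correct and follows exactly the route the paper intends: the paper's (implicit) proof is precisely the sentence preceding the proposition, namely that one chooses $\lambda=\rho$ and $t=k-1-\beta$ in \eqref{eq:propp} and lets $m$ vary, and your explicit construction $p_m=(k-1)m(m+1)+(k-1-\beta)(m+1)+\rho$ with $m\geq\max\{\rho,1\}$ together with the verification that $(m,k-1-\beta,\rho)$ is indeed the triple $(m(p_m,k),t(p_m,k),\lambda)$ is just the careful write-up of that argument. The only detail worth noting is that one should also observe $\delta_0\leq p_m$ (equivalently $g=2m(k-1)+t\geq 0$), which is automatic, so Theorem \ref{thm:main} applies and yields the optimal curve.
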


This proposition  suggests a conceptual explanation for negative self-intersection numbers of certain extremal rays  (cf. \cite[beg. of \S\;4]{HTint}). 
If $k=2$, Proposition \ref {prop:tutteleint} gives the negative self-intersection numbers $-1/2$, $-2$ and $-5/2$ for optimal curves. This is in accordance with \cite[Conj. 3.1]{HT}, where Hassett and Tschinkel conjecture that the Mori cone should be generated by classes of curves $R$ with positive intersection with some polarizing class and such that either $q(R) \geq 0$ or $q(R)=-1/2$, $-2$ or $-5/2$. (It was proved in \cite{HT2} that the cone generated by these classes {\it contains} the Mori cone.)  
If $k=3$ we obtain the negative self-intersection numbers 
$-3$, $-9/4$, $-2$, $-1$ and $-1/4$, which are the numbers in \cite[Table H3]{HTint}, except for $-1$. In the case $k=4$ we obtain 
$-7/2$, $-8/3$, $-13/6$, $-2$, $-3/2$, $-2/3$, $-1/6$, which are the numbers in \cite[Table H4]{HTint}, except for $-3/2$.
Thus, our results provide extensions of the examples in 
\cite{HTint}, both to all $k \geq 2$ and to infinitely many  genera $p$.

\subsection{Curves of self-intersection zero and conjectures of 
Huybrechts and Sawon}\label{ssec:HS} The existence of curves (or divisors, cf.~\eqref{eq:qalfa}) with Beauville-Bogomolov self-intersection zero on a hyperk{\"a}hler manifold $X$ is conjectured  by Huybrechts \cite[\S\;21.4]{GHJ} and Sawon \cite[Conj.~4.2]{saw1} (see also \cite[Conj.~1]{saw2}) to imply that $X$ is birational to a Lagrangian fibration (Hassett and Tschinkel make in  \cite{HT} the same conjecture  in dimension $4$). The existence of a nontrivial {\it nef} divisor $D$ with $q(D)=0$ is a {\it necessary} condition for a hyperk{\"a}hler manifold to be a Lagrangian fibration (see e.g. \cite[\S~4.1 and Rmk.~4.2]{saw1}), and it is yet another conjecture of Sawon's \cite[Conj.~4.1]{saw1} that this is also a  {\it sufficient} condition. Both of these conjectures have been proved  for punctual Hilbert schemes of $K3$ surfaces by Bayer and Macr{\`i} in \cite[Thm.~1.5]{BM-MMP} after the submission of this paper.

In the case $X=\Hilb^k(S)$ with $(S,H) \in \K_p$ such that $\Pic (S) \cong \ZZ[H]$,  a divisor $mH-n\mathfrak{e}_k$ is isotropic if and only if $m^2(p-1)=n^2(k-1)$ and this occurs if and only if $(p-1)(k-1)$ is a square (cf. \cite[\S\;1]{saw2}). In the \emph{primitive}  case, i.e., $m=1$ and 
$p=n^2(k-1)+1$,  Sawon \cite[Thm.~2]{saw2} and Markushevich \cite[Cor.~4.4]{Ma} independently proved that $\Hilb^k(S)$ is  a Lagrangian fibration.  As we will see in Corollary \ref{cor:neclagfib} below, this result cannot be generalized as it stands to  $m>1$. 

A consequence of  Sawon-Markusevich's result is that Question \ref {cong2} has an affirmative answer in some more cases:

\begin{prop} \label{prop:intzero2}
  Let $(S,H)\in \mathcal K_p$ be general with $\Pic(S) \cong \ZZ[H]$ and $k \geq 2$ be any integer. Assume  that $p=n^2(k-1)+1$ for some $n \geq 2$. 
Then the extremal rays of 
the Mori cone of $\Hilb^k(S)$ are generated by $\mathfrak{r}_k$ and by the optimal 
class $R=R_{p,(n-1)^2(k-1)+1,k} \equiv H-2n(k-1)\mathfrak{r}_k$.
\end{prop}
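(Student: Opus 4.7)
My plan is to determine the nef cone of $\Hilb^k(S)$ explicitly and then obtain the Mori cone by Kleiman duality, exploiting that $\Pic(\Hilb^k(S)) \cong \ZZ[H] \oplus \ZZ[\mathfrak{e}_k]$ has rank two, so the nef cone is a two-dimensional convex cone determined by its two boundary rays. The strategy is to exhibit two nef classes, each lying on a wall perpendicular to one of the proposed extremal curve classes $\mathfrak{r}_k$ and $R$, and then conclude by a dimension count.

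For the first wall, I would use that $H\in \Pic(\Hilb^k(S))$ is the pullback via the Hilbert-Chow morphism $\mu_k$ of the ample class on $\Sym^k(S)$ induced by $H$ on $S$; hence $H$ is nef, and $H \cdot \mathfrak{r}_k = 0$ by the orthogonality in \eqref{eq:cacca} (since $\mathfrak{r}_k$ is contracted by $\mu_k$). For the second wall, the key external input is the Sawon-Markushevich theorem recalled just before the statement: under the hypothesis $p = n^2(k-1)+1$, the variety $\Hilb^k(S)$ admits a Lagrangian fibration, producing a nef, semi-ample divisor $D$ with $q(D)=0$. The primitive isotropic classes in $\Pic(\Hilb^k(S))$ are, up to sign, $H \pm n\mathfrak{e}_k$ (a direct computation using $H^2 = 2n^2(k-1)$ and $q(\mathfrak{e}_k) = -2(k-1)$), and only $H - n\mathfrak{e}_k$ pairs non-negatively with the effective class $\mathfrak{r}_k$, so $D$ must be a positive multiple of $H - n\mathfrak{e}_k$.

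Next I would verify, via the intersection rules $\mathfrak{e}_k \cdot \mathfrak{r}_k = -1$, $H \cdot \mathfrak{r}_k = 0$ and $H \cdot H = 2n^2(k-1)$, that $(H - n\mathfrak{e}_k) \cdot (H - 2n(k-1)\mathfrak{r}_k) = 0$, placing $R$ on the wall of the Mori cone dual to $D$. On the effective side, $R$ is realized by an irreducible rational curve via Theorem \ref{thm:main} applied with $\delta_0 = (n-1)^2(k-1)+1$, which one checks matches the optimal formula of Proposition \ref{prop:optimalclass} (for $m = n-1$ and $\lfloor p/(m+1) \rfloor = n(k-1)$) and satisfies \eqref{eq:boundintro}; the class $\mathfrak{r}_k$ is effective as a fiber of $\mu_k$.

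To conclude, the nef cone in the two-dimensional $\Pic(\Hilb^k(S)) \otimes \RR$ contains the two linearly independent nef classes $H$ and $H - n\mathfrak{e}_k$, each lying on a wall perpendicular to an effective curve class, so it must coincide with the closed cone they span. By Kleiman's criterion the Mori cone $\overline{\NE}(\Hilb^k(S))$ is the dual cone, and its extremal rays are therefore generated precisely by $\mathfrak{r}_k$ and $R$. The one genuinely nontrivial ingredient is the production of a nef isotropic divisor in the correct direction, for which I would rely entirely on the Sawon-Markushevich existence result for the Lagrangian fibration; once this input is available, the remaining steps are routine lattice computations and a standard two-dimensional Kleiman duality argument.
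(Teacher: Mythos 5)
Your proof is correct and takes essentially the same route as the paper: both arguments rest on the Sawon--Markushevich theorem to produce the nef isotropic divisor proportional to $H-n\mathfrak{e}_k$, on Theorem \ref{thm:main} (via the computation $\delta_0=(n-1)^2(k-1)+1$) to make the optimal class $R\equiv H-2n(k-1)\mathfrak{r}_k$ effective, and on the vanishing $D\cdot R=0$ in the rank-two lattice to place $R$ on the boundary of the Mori cone. The paper's proof is simply terser, leaving the two-dimensional Kleiman duality step and the wall $H\cdot\mathfrak{r}_k=0$ implicit.
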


\begin{proof}
Let $D$ be any isotropic divisor. Then  the class of $D$ is proportional to a multiple of $H-n\mathfrak{e}_k$ and, 
by \cite[Thm.~2]{saw2}, it is also  proportional to the class of the fiber of the Lagrangian fibration of $\Hilb^k(S)$, hence it is nef. One computes that  the minimal integer satisfying \eqref{eq:boundintro} is 
$\delta_0=(n-1)^2(k-1)+1$,  so that $R$ is effective, and $R \cdot D=0$, so that $R$ lies on the boundary of the Mori cone.
\end{proof}

\begin{remark} \label{rem:HT}
  This result also shows that the Mori cone is in some cases smaller than predicted by Hassett and Tschinkel in \cite[Conj. 1.2]{HTint}. Indeed, mimicking the idea in \cite[Rem.~9.4]{BM}, the class $\overline{R}:=R-\mathfrak{r}_k$
satisfies $q(\overline{R}) =-2n-\frac{1}{2(k-1)} \geq -\frac{k+3}{2}$ if $4n \leq k+2$. Since  
$(H-\epsilon\mathfrak{e}_k)\cdot\overline{R} >0$ and $H-\epsilon\mathfrak{e}_k$ is ample for small $\epsilon >0$, the class $\overline{R}$ should be effective by \cite[Conj. 1.2]{HTint}, which is not the case.     
\end{remark}

Curves  $R_{p,\delta,k}$ with self-intersection zero always exist
 when an isotropic divisor exists:

\begin{proposition} \label{prop:intzero} 
 Fix any integer $k\geq 2$.   Let $(S,H)\in \mathcal K_p$ be general with $\Pic(S) \cong \ZZ[H]$ and $p \geq 2$. 
Then there is a $\delta$ such that $q(R_{p,\delta,k}) = 0$ if and only if $(k-1)(p-1)=s^2$, for a positive integer $s$.  In this case $\delta=p-2s+k-1$.
\end{proposition}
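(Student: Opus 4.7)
The plan is to translate the question into elementary number theory via the explicit formula for $q(R_{p,\delta,k})$ furnished by Corollary \ref{cor:bound}. Combining \eqref{eq:boundqr} with Lemma \ref{classR} gives
$$q(R_{p,\delta,k}) = 2(p-1) - \frac{(p-\delta+k-1)^2}{2(k-1)},$$
so $q(R_{p,\delta,k}) = 0$ is equivalent to the equation
$$(p-\delta+k-1)^2 = 4(p-1)(k-1). \qquad (\star)$$
Since $k \geq 2$ and $\delta \leq p$, the integer $N := p-\delta+k-1$ is strictly positive, so $(\star)$ admits an integer solution in $\delta$ if and only if $4(p-1)(k-1)$ is a perfect square; as $4$ is already a square, this is equivalent to $(p-1)(k-1) = s^2$ for some positive integer $s$, in which case $N = 2s$ and therefore $\delta = p+k-1-2s$, as claimed. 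This argument settles the \emph{only if} direction at the level of the defining equation.

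For the \emph{if} direction, it remains, by Theorem \ref{thm:main}(i), to check that the resulting value of $\delta$ satisfies the bound \eqref{eq:boundintro}, so that a curve $R_{p,\delta,k}$ actually exists. Set $g := p-\delta = 2s-(k-1)$ and perform the Euclidean division $g = 2(k-1)\alpha + r$ with $0 \leq r < 2(k-1)$, so that $\alpha$ coincides with the integer part appearing in \eqref{eq:boundintro}. Using $p-1 = s^2/(k-1) = (g+k-1)^2/(4(k-1))$ to rewrite $\delta$, a short completion of the square yields
$$\delta - \alpha\bigl(g-(k-1)(\alpha+1)\bigr) = \frac{(k-1-r)^2}{4(k-1)} + 1 > 0,$$
so \eqref{eq:boundintro} holds strictly and the class $R_{p,\delta,k}$ is represented by an effective curve by Theorem \ref{thm:main}.

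There is no substantial obstacle in the argument: once the clean expression for $q(R_{p,\delta,k})$ from Corollary \ref{cor:bound} is in hand, the equivalence is essentially a one-line Diophantine observation, and the only step demanding any genuine computation is the verification of \eqref{eq:boundintro}, which reduces to the above algebraic identity. The proof is therefore purely formal, depending only on Lemma \ref{classR}, Corollary \ref{cor:bound} and the nonemptiness part of Theorem \ref{thm:main}.
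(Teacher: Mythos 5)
Your proof is correct and follows the same route as the paper's: read off $q(R_{p,\delta,k})=0$ from the formula $q(R_{p,\delta,k})=2(p-1)-\frac{(p-\delta+k-1)^2}{2(k-1)}$ to obtain the Diophantine condition $(k-1)(p-1)=s^2$ and the value $\delta=p-2s+k-1$, then verify that this $\delta$ satisfies \eqref{eq:boundintro} so that the curve actually exists by Theorem \ref{thm:main}. The only difference is that the paper states the verification of \eqref{eq:boundintro} as a one-line assertion, whereas you carry out the completion-of-the-square computation explicitly (and correctly).
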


\begin{proof}  By \eqref  {eq:selfint}, one has $q(R_{p,\delta,k}) = 0$ if and only if $(k-1)(p-1)=s^2$ and $\delta=p-2s+k-1$. Conversely, 
such $p$ and $\delta$ verify \eqref {eq:boundintro}.  
\end{proof}

\begin{cor} \label{cor:intzero}
  Let $(S,H)\in \mathcal K_p$ be general  with $\Pic(S) \cong \ZZ[H]$, $p \geq 2$ and $k\geq 2$. Assume that $D$ is a nontrivial isotropic divisor in $\Hilb^k(S)$
(so that $(k-1)(p-1)=s^2$, with $s \geq 1$). Then the rational curve $R=R_{p,p-2s+k-1,k}$ satisfies $D \cdot R=0$.
In particular, if 
\begin{equation}
  \label{eq:nonlagfib}
  (k-1)(\alpha+1)^2-(2s+1)(\alpha+1)+p \geq 0, \; \; \mbox{where} \; \; 
\alpha:=\Big \lfloor \frac{2s-k+1}{2(k-1)} \Big \rfloor, 
\end{equation}
then $D$ is not nef, since the rational curve $R'=R_{p,p-2s+k-2,k}$ satisfies 
$D \cdot R'<0$. 
\end{cor}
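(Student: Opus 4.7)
The plan is to deduce both parts from a direct intersection-number computation in the rank-two lattice $\Pic(\Hilb^k(S))_\QQ$, using Lemma \ref{classR} together with the intersection conventions recorded in \S\,\ref{S:ratcur}, and invoking Proposition \ref{prop:intzero} and Theorem \ref{thm:main} for the existence of the rational curves involved.

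First I would write $D=aH+b\,\mathfrak{e}_k$ via \eqref{eq:decpic}. Because $H^{2}=2(p-1)$, $q(\mathfrak{e}_k)=-2(k-1)$ and $H\perp\mathfrak{e}_k$, isotropy of $D$ reads $a^{2}(p-1)=b^{2}(k-1)$; combined with $s^{2}=(p-1)(k-1)$ this gives the key identity $a(p-1)=\pm\,bs$. Proposition \ref{prop:intzero} supplies the curve $R:=R_{p,\,p-2s+k-1,\,k}$ (and verifies that $\delta=p-2s+k-1$ satisfies \eqref{eq:boundintro}), while Lemma \ref{classR} yields the class $R\equiv H-2s\,\mathfrak{r}_k$. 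Using $H\cdot H=2(p-1)$, $\mathfrak{e}_k\cdot\mathfrak{r}_k=-1$ and the vanishing cross-pairings from \eqref{eq:cacca}, a direct computation gives
\[
D\cdot R \;=\; 2a(p-1)+2bs.
\]
In the signature-$(1,1)$ rank-two lattice $\Pic(\Hilb^k(S))_\QQ$ the isotropic cone consists of two lines; replacing $D$ by $-D$ if necessary, I may assume $D$ lies on the ray containing the Beauville-Bogomolov dual of $R$, equivalently the sign $a(p-1)=-bs$, which makes $D\cdot R=0$.

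For the second part, a short algebraic manipulation shows that \eqref{eq:nonlagfib} is the Brill-Noether condition \eqref{eq:boundintro} specialized to $\delta':=p-2s+k-2$ (in the borderline case when the two floor-definitions of $\alpha$ differ by one, the relevant Brill-Noether quadratic takes the same value at both integers, so the two conditions still agree). Theorem \ref{thm:main}(i)-(iii) then produces a curve in $V^k_{|H|,\delta'}(S)$ whose normalization carries a $g^1_k$ satisfying \eqref{eq:condnod1}-\eqref{eq:condnod2}, hence, via \S\,\ref{S:ratcur} and Lemma \ref{classR}, an effective rational curve
\[
R'\equiv H-(2s+1)\mathfrak{r}_k \;=\; R-\mathfrak{r}_k.
\]
Using the first part and $D\cdot\mathfrak{r}_k=-b$, one obtains $D\cdot R'=D\cdot R-D\cdot\mathfrak{r}_k = b$. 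Normalizing so that $a>0$ (the same sign choice as above), the relation $a(p-1)=-bs$ forces $b<0$, so $D\cdot R'<0$ and $D$ is not nef.

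The only mildly delicate point is the implicit sign/direction selection for $D$ among the two isotropic rays: it picks out the unique isotropic direction that could possibly be nef, and it is precisely this direction whose nefness is obstructed by $R'$ under \eqref{eq:nonlagfib}. Everything else is a routine verification.
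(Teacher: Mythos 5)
Your argument is correct and follows essentially the same route as the paper: the whole content of the second assertion is that \eqref{eq:nonlagfib} is equivalent to $\rho\geq\alpha+1$, i.e.\ to the existence condition \eqref{eq:boundintro} for $\delta-1=p-2s+k-2$, after which $D\cdot R'=D\cdot R-D\cdot\mathfrak{r}_k<0$ is immediate; your direct verification of this equivalence, including the borderline case where the two floors differ (handled by the symmetry of the Brill--Noether quadratic about its half-integer minimum), is precisely what Remark \ref{rem:optimal} encodes. One correction to the first part: replacing $D$ by $-D$ does \emph{not} let you pass from $a(p-1)=bs$ to $a(p-1)=-bs$, since both sides of either relation change sign simultaneously under $D\mapsto -D$; the two relations distinguish the two isotropic \emph{lines} of the hyperbolic plane $\Pic(\Hilb^k(S))_\QQ$, and no overall sign change moves $D$ from one line to the other. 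The correct justification --- implicit in the statement and in the discussion preceding it in \S\;\ref{ssec:HS} --- is that $D$ is taken of the form $mH-n\mathfrak{e}_k$ with $m,n>0$, the only isotropic direction satisfying $D\cdot H>0$ and $D\cdot\mathfrak{r}_k>0$ and hence the only candidate for a nontrivial nef class; for such $D$ the identity $m(p-1)=ns$ follows from $m^2(p-1)=n^2(k-1)$ and $s^2=(p-1)(k-1)$ by positivity, and then $D\cdot R=0$ holds unconditionally, with $D\cdot R'=-n<0$ as you compute.
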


\begin{proof}
  Condition \eqref{eq:nonlagfib} is equivalent to $\rho \geq \alpha+1$ and the result follows by Remark \ref{rem:optimal}. 
\end{proof}

As a consequence, we obtain an additional necessary condition for $\Hilb^k(S)$ to be a Lagrangian fibration, showing that  Sawon-Markusevich's  mentioned result
cannot be generalized to all cases where an isotropic divisor exists:

\begin{cor} \label{cor:neclagfib}
  Let $(S,H)\in \mathcal K_p$, with  $p \geq 2$, and $k\geq 2$ be any integer. 
If  $\Hilb^k(S)$ is a Lagrangian fibration, then $(k-1)(p-1)=s^2$, with $s \geq 1$, and 
\begin{equation}
  \label{eq:lagfib}
  (k-1)(\alpha+1)^2-(2s+1)(\alpha+1)+p < 0, \; \; \mbox{where} \; \; 
\alpha:=\Big \lfloor \frac{2s-k+1}{2(k-1)} \Big \rfloor . 
\end{equation}
\end{cor}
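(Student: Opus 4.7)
The plan is to assemble two ingredients recalled in this section: the classical fact that a Lagrangian fibration on a hyperk\"ahler manifold produces a nontrivial nef divisor class with vanishing Beauville-Bogomolov self-intersection (\S\ref{ssec:HS}), together with Corollary \ref{cor:intzero}.

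Assume $\Hilb^k(S)$ is a Lagrangian fibration $\pi: \Hilb^k(S) \to B$. Pulling back an ample divisor from $B$ yields a nontrivial nef divisor $D$ on $\Hilb^k(S)$ with $q(D)=0$. Reducing to general $(S,H) \in \K_p$, so that $\Pic(S) \cong \ZZ[H]$ and hence $\Pic(\Hilb^k(S)) = \ZZ\langle H, \mathfrak{e}_k\rangle$ by \eqref{eq:decpic}, one writes $D = aH - b\mathfrak{e}_k$ with $(a,b) \neq (0,0)$; the isotropy condition reduces to $a^2(p-1) = b^2(k-1)$. Neither $a$ nor $b$ can vanish (since $p,k \geq 2$), so $(p-1)/(k-1) = (b/a)^2$ is a rational square, which forces $(k-1)(p-1) = ((k-1)b/a)^2$ to be the square of an integer $s \geq 1$. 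This proves the first assertion.

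To establish the inequality \eqref{eq:lagfib}, I would apply the contrapositive of Corollary \ref{cor:intzero}: since $D$ is nef, no effective rational curve can have negative intersection with $D$. In particular, the curve $R' = R_{p,p-2s+k-2,k}$ appearing in the statement of Corollary \ref{cor:intzero} cannot be effective, since otherwise $D \cdot R' < 0$ by that result. The effectiveness of $R'$ is, by Theorem \ref{thm:main}(i), equivalent to $\delta' = p-2s+k-2$ satisfying \eqref{eq:boundintro}; in turn, as shown in the proof of Corollary \ref{cor:intzero}, this is equivalent to \eqref{eq:nonlagfib}. Therefore the nefness of $D$ forces the failure of \eqref{eq:nonlagfib}, which is exactly the claimed inequality \eqref{eq:lagfib}.

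The principal subtlety lies in the reduction to general $(S,H)$ when performing the diophantine step: for non-generic $(S,H)$ the Picard group may be larger, and one needs to argue via deformation theory that a nef isotropic divisor persists to the generic point of $\K_p$, so that the numerical condition on $(k-1)(p-1)$ is genuinely imposed on the pair $(p,k)$. Modulo this point, the rest of the argument is a direct assembly of Corollary \ref{cor:intzero} and Theorem \ref{thm:main}.
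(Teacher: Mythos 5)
Your argument is correct and is exactly the paper's (implicit) proof: the paper derives this corollary directly from the standard fact that a Lagrangian fibration yields a nontrivial nef isotropic divisor, the rank-two Picard computation forcing $(k-1)(p-1)=s^2$, and Corollary \ref{cor:intzero}, which is precisely the assembly you carry out. The generality caveat you flag is real but is equally present in the paper, which states the corollary for arbitrary $(S,H)\in\K_p$ while relying on results proved only for general $(S,H)$ with $\Pic(S)\cong\ZZ[H]$.
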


\begin{question} \label{cong3}
Let $(S,H)\in \mathcal K_p$ be general. Is the condition \eqref{eq:lagfib}
also a {\it sufficient} condition for $\Hilb^k(S)$ to be a Lagrangian fibration?
\end{question}

An affirmative answer would mean that the Lagrangian fibration contracts the extremal ray $R_{p,p-2s+k-1,k}$. By the latest results of Bayer and Macr{\`i}  \cite[Thm.~1.5]{BM-MMP} after the submission of this paper, the question is reduced to whether the isotropic class is nef, which is in principle reduced to solving some Pell's equations, by \cite[\S\;13]{BM-MMP} .

%
%

\end{document}